\numberwithin{equation}{section}
\numberwithin{figure}{section}
\patchcmd{\thebibliography}{\chapter*}{\section*}{}{}
\newtheorem{thm}{Theorem}[section]
\newtheorem{lemma}{Lemma}[section]
\newtheorem{rem}{Remark}[section]
\numberwithin{table}{section}
\newcommand{\commentout}[1]{{}} 
\newcommand{\bfb}{{\bf b}}
\newcommand{\bfc}{{\bf c}}
\newcommand{\bfdelta}{\boldsymbol{\delta}}
\newcommand{\bfgamma}{\boldsymbol{\gamma}}
\newcommand{\vertiii}[1]{{\left\vert\kern-0.25ex\left\vert\kern-0.25ex\left\vert #1
    \right\vert\kern-0.25ex\right\vert\kern-0.25ex\right\vert}}
\begin{document}
\title{A Trilinear Immersed Finite Element Method \\ for Solving Elliptic Interface
Problems}
\author{
Ruchi Guo \thanks{Department of Mathematics, Virginia Tech, Blacksburg, VA 24061 (ruchi91@vt.edu) }
\and Tao Lin \thanks{Department of Mathematics, Virginia Tech, Blacksburg, VA 24061 (tlin@vt.edu) }
  }
\date{}
\maketitle
\begin{abstract}
This article presents an immersed finite element (IFE) method for solving the typical three-dimensional second order elliptic interface problem with an interface-independent Cartesian mesh. The local IFE space on each interface element consists of piecewise trilinear polynomials which are constructed by extending polynomials from one subelement to the whole element according to the jump conditions of the interface problem. In this space, the IFE shape functions with the Lagrange degrees of freedom can always be constructed regardless of interface location and discontinuous coefficients. The proposed IFE space is proven to have the optimal approximation capabilities to the functions satisfying the jump conditions. A group of numerical examples with representative interface geometries are presented to demonstrate features of the proposed IFE method.
\end{abstract}

\section{Introduction}

Let $\Omega\subseteq\mathbb{R}^3$ be a domain, and, without loss of generality, we assume that
$\Omega$ is separated into two subdomains $\Omega^-$ and $\Omega^+$ by a closed $C^2$ interface surface $\Gamma\subseteq\Omega$. These subdomains contain different materials identified by a piecewise constant parameter $\beta$ discontinuous across the interface $\Gamma$, i.e.,
\begin{equation*}
\beta(X)=
\left\{\begin{array}{cc}
\beta^- & \text{in} \; \Omega^- ,\\
\beta^+ & \text{in} \; \Omega^+.
\end{array}\right.
\end{equation*}
We consider the following interface problem of the elliptic type on $\Omega$:
\begin{subequations}\label{model}
\begin{align}
\label{inter_PDE}
& -\nabla\cdot(\beta\nabla u)=f, ~~~~~~~~~~~~~~~~~~~~~~~~~~~~~~~~~~~~~~~~~~~  \text{in} \;\; \Omega^-  \cup \Omega^+, \\
& [u]_{\Gamma}:  = u|_{\Omega^+} - u|_{\Omega^-} = 0, ~~~~~~~~~~~~~~~~~~~~~~~~~~~~~~~~~~  \text{on} \;\; \Gamma \label{jump_cond_1}, \\
& \big[\beta \nabla u\cdot \mathbf{n}\big]_{\Gamma} := \beta^+ \nabla u|_{\Omega^+}\cdot \mathbf{n} - \beta^- \nabla u|_{\Omega^-}\cdot \mathbf{n} = 0, \;\;\;\;~  \text{on} \;\; \Gamma \label{jump_cond_2}, \\
&  u=g, ~~~~~~~~~~~~~~~~~~~~~~~~~~~~~~~~~~~~~~~~~~~~~~~~~~~~~~~~~~\text{on} \;\; \partial\Omega,
\end{align}
\end{subequations}
where $\mathbf{ n}$ is the normal vector to $\Gamma$.
For simplicity, we denote $u^s=u|_{\Omega^s}$, $s=\pm$, in the rest of this article.

The elliptic interface problem \eqref{model} has wide applications in science and engineering such as inverse problems \cite{2018GuoLinLinIntInvProb,2005HolderDavid,2010VallaghePapadopoulo}, fluid dynamics \cite{1997LevequeLi,2001LiLai}, biomolecular electrostatics \cite{2002FogolariBrigoMolinari,2016XieYing}, plasma simulation \cite{1991BirdsallLangdon,1988HockneyEastwood}, to name just a few. Traditional finite element methods can be applied to solve this interface problem based on an interface-fitted mesh \cite{1970Babuska,1998ChenZou,1982Xu}. However, when the interface has complex geometry, for example the material interface in biomedical images \cite{2009FormaggiaQuarteroniVeneziani,2010VallaghePapadopoulo} and geophysical images \cite{2014DassiPerottoFormaggiaRuffo} in the 3-D case, it is a time-consuming and non-trivial process to generate a high-quality interface-fitted mesh to resolve the interface geometry. And this mesh generation issue will become more severe if the interface changes its shape or moves in computation. Recently, a new finite element method based on a semi-structured mesh was proposed in \cite{2017ChenWeiWen} where the interface geometry is fitted by a local Delaunay triangulation on interface elements of a pre-generated background Cartesian mesh.

Alternatively, methods that can solve the interface problem \eqref{model} on a mesh independent of the interface geometry, referred as the unfitted mesh methods, have drawn attention from researchers. Methods in this category can be roughly categorized into two groups: modify computation scheme around the interface or modify finite element functions on interface elements. Examples in the first group are the immersed interface methods (IIM) \cite{1994LevequeLi,2006LiIto} in the finite difference context and the CutFEM \cite{2015BurmanClaus,2002HansboHansbo} based on the finite element scheme. Methods in the second group can be found for the multiscale finite element methods \cite{2010ChuGrahamHou,2009EfendievHou}, the extended finite element methods \cite{2001DolbowMoesBelytschko,2001SukumarChoppMoesBelytschko}, the partition of unity methods \cite{1996BabuskaMelenk,2004SukumarHuang} and the immersed finite element (IFE) methods to be discussed in this article. We note that some of these methods may actually involve both the two types of modifications.

The key idea in the IFE methods is to use piecewise polynomials constructed according to the jump conditions on interface elements, i.e., the Hsieh-Clough-Tocher \cite{2001Braess,1966CloughTocher} type macro elements, to capture the jump behaviors across the interface, while standard polynomials are used over non-interface elements. In addition to the optimal convergence rate the IFE method can achieve on a unfitted mesh, it can also keep the number and location of degrees of freedom isomorphic to the standard finite element method defined on the same mesh. And this feature is advantageous when dealing with moving interface problems for which we refer readers to \cite{2018AdjeridChaabaneLinYue,2018BaiCaoHeLiuYang,2018GuoLinLinIntInvProb,2013HeLinLinZhang,2013LinLinZhang1}.

In this work, we discuss a trilinear IFE method on a highly structured mesh (Cartesian mesh) for solving the interface problem \eqref{model} with optimal accuracy. Our research presented here is motivated by real-world problems. For example, in plasma simulations in composite materials by the particle-in-cell (PIC) code \cite{1991BirdsallLangdon,1988HockneyEastwood}, many macro-particles modeling a plasma in the self-consistent electromagnetic field have to be traced and located on elements in a mesh iteratively during their motion in order to perform the particle-mesh interpolation procedure; therefore a structure mesh is preferred to a unstructured mesh because of the computational cost in search. As an another example, in electroencephalography, a background Cartesian mesh for a head model can be constructed by the pixels of magnetic resonance images (MRIs) \cite{2010VallaghePapadopoulo} for finite element computation. We refer readers to \cite{2005KafafyLinLinWang,2010VallaghePapadopoulo,2008WangHeCao} for the applications of IFE methods in these fields.

Although many works on the 2-D IFE methods have appeared in the literature, such as \cite{2019Guo,2016GuoLin,2018GuoLinLinElasAprox,2016GuoLinZhang,2018GuoLinZhuang,2008HeLinLin,2015LinLinZhang,2015LinYangZhang1} for theoretical analysis and \cite{2015AdjeridChaabaneLin,2018AdjeridChaabaneLinYue,2018BaiCaoHeLiuYang,2018GuoLinLinIntInvProb,2008WangHeCao}
for applications, to name just a few, the study on the 3-D case is relatively sparse, see \cite{2005KafafyLinLinWang} for a linear IFE method and \cite{2010VallaghePapadopoulo} for a trilinear IFE method. Even though the research reported here is within the direction of that in \cite{2010VallaghePapadopoulo}, but our work has three distinct new contributions. The first one is a group of detailed geometric estimates for a suitable linear approximation of the interface surface on each interface element. In particular, we introduce a maximal angle condition for constructing such a special linear approximation to the interface surface with optimal accuracy in terms of the surface curvature and mesh size. Both the theoretical analysis and numerical experiments indicate that this maximal angle condition and the resulted geometric properties are the foundation of the optimal approximation capabilities for the proposed IFE spaces. We also believe these fundamental geometric estimates can be useful for other unfitted mesh methods. Secondly, on interface elements, we construct local IFE spaces by extending trilinear polynomials from one subelement to another through a discretized extension operator designed according to the jump conditions across the interface. IFE shape functions with some desirable features, such as the Lagrange type degrees of freedom, can be readily constructed from this space and their existence is guaranteed completely independent of the interface location and coefficients $\beta^{\pm}$. Moreover, the extension operator enables us to establish the optimal approximation capabilities for the proposed IFE spaces. To the best of our knowledge, this is the first work for 3-D IFE methods covering both the development and basic analysis. We highlight that the proposed construction and analysis techniques can be readily extendable to IFE methods on unstructured unfitted meshes for solving the elliptic interface problems.

This article consists of four additional sections. In the next section, we establish a group of fundamental geometric estimates. In Section \ref{sec:tri_IFE_func}, we develop the trilinear IFE space and construct the Lagrange IFE shape functions. In Section \ref{sec:approximation}, we analyze the approximation capabilities of the proposed IFE space. In the last section, we present a group of numerical examples with representative geometries of the interface surface to demonstrate the features of the proposed IFE method.





\section{Some Geometries of Interface Elements}
\label{sec:geometry}

In this section, we present some geometric properties related to the interface surface and interface elements. These properties are fundamental for both the construction of the IFE spaces to be proposed and the related error analysis.

Throughout this article, we assume the bounded domain $\Omega\subset\mathbb{R}^3$ is a union of finitely many rectangular parallelepipeds,
and let $\mathcal{T}_h$ be a Cartesian mesh of the domain $\Omega$ with the maximum length of edge $h$. We let $\mathcal{F}_h$ and $\mathcal{E}_h$ be the collection of faces and edges in this mesh $\mathcal{T}_h$. We call an element $T\in \mathcal{T}_h$ an interface element if $T\cap \Gamma \neq \emptyset$; otherwise, we call it a non-interface element. Similarly, we can define the interface faces and edges. Furthermore, we use $\mathcal{T}^i_h$/$\mathcal{F}^i_h$/$\mathcal{E}^i_h$ and $\mathcal{T}^n_h$/$\mathcal{F}^n_h$/$\mathcal{E}^n_h$ to denote the collection of interface and non-interface elements/faces/edges, respectively.

Given each measurable subset $\tilde \Omega \subseteq \Omega$, let $W^{k,p}(\tilde \Omega)$ be the standard Sobolev spaces on $\tilde \Omega$ with the Sobolev norm $\|\cdot\|_{k,p,\tilde \Omega}$ and the semi-norm $|v|_{k,p,\tilde \Omega}=\|D^{\alpha}v\|_{0,p,\tilde \Omega}$, for $|\alpha|=k$. The corresponding Hilbert space is $H^k(\tilde \Omega)=W^{k,2}(\tilde \Omega)$ associated with the norm $\|\cdot\|_{k,\tilde \Omega}$ and semi-norm $|\cdot|_{k,\tilde \Omega}$. In the case $\tilde \Omega^s := \tilde \Omega \cap \Omega^s \not = \emptyset, s = \pm$, we define the splitting Hilbert space
\begin{equation}
\label{split_Hspa}
PH^k(\tilde{\Omega}) = \{ u\in H^k(\tilde{\Omega}^{\pm}) ~:~  [u]|_{\Gamma\cap\tilde{\Omega}}=0 ~ \text{and} ~  [\nabla u\cdot\mathbf{ n}]|_{\Gamma\cap\tilde{\Omega}}=0 \},
\end{equation}
where the definition implicitly implies the involved traces on $\Gamma\cap\tilde{\Omega}$ are well defined, with the associated norms
\begin{equation*}
\|\cdot\|^2_{k,\tilde \Omega}=\|\cdot\|^2_{k,\tilde \Omega^+}+\|\cdot\|^2_{k,\tilde \Omega^-}, \;\;\;\;\; |\cdot|^2_{k,\tilde \Omega}=|\cdot|^2_{k,\tilde \Omega^+}+|\cdot|^2_{k,\tilde \Omega^-},
\end{equation*}
\begin{equation*}
\|\cdot\|_{k,\infty,\tilde \Omega}=\max(\|\cdot\|_{k,\infty,\tilde \Omega^+} \;,\; \|\cdot\|_{k,\infty,\tilde \Omega^-}), \;\;\;\;\; |\cdot|_{k,\infty,\tilde \Omega}=\max(|\cdot|_{k,\infty,\tilde \Omega^+} \;,\; |\cdot|_{k,\infty,\tilde \Omega^-}).
\end{equation*}
Furthermore, for each interface element, we define its patch $\omega_T$ as
\begin{equation}
\label{patch}
\omega_T = \{ T'\in \mathcal{T}_h~:~ \overline{T'}\cap \overline{T} \neq \emptyset \}.
\end{equation}

 We begin by recalling the definition and existence of a so called $r$-tubular neighborhood of the smooth interface surface $\Gamma$ which are based on the following Lemma from \cite{1959Federer}.

\begin{lemma}[$r$-tubular neighborhood]
\label{tubular}
Given a smooth compact surface $\Gamma$ in $\mathbb{R}^3$, for each $X\in\Gamma$, let $N_X(r)$ be a segment with the length $2r$ centered at $X$ and perpendicular to $\Gamma$. Then, there exists a positive $r>0$ such that $N_X(r)\cap N_Y(r) = \emptyset$
for any $X, Y\in \Gamma, X \not = Y$.
\end{lemma}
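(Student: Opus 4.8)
The plan is to realize the union of the normal segments as the image of the \emph{normal map} and to deduce the required disjointness from the injectivity of this map on a compact slab, which in turn follows from the inverse function theorem together with a compactness argument. Since $\Gamma$ separates $\Omega^-$ from $\Omega^+$ it is orientable, so we may fix a globally defined unit normal field $\mathbf{n}$ on $\Gamma$; because $\Gamma$ is smooth (say $C^2$), the field $\mathbf{n}$ is $C^1$. Introduce the normal map
\[
\Phi:\Gamma\times\mathbb{R}\to\mathbb{R}^3,\qquad \Phi(X,t)=X+t\,\mathbf{n}(X),
\]
which is $C^1$, and observe that $N_X(\rho)=\{\Phi(X,t):t\in[-\rho,\rho]\}$. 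Hence, if $\Phi$ is injective on $\Gamma\times[-\rho,\rho]$, then for $X\neq Y$ a common point of $N_X(\rho)$ and $N_Y(\rho)$ would be a coincidence $\Phi(X,s)=\Phi(Y,t)$, forcing $X=Y$, a contradiction; so it suffices to produce $r>0$ for which $\Phi|_{\Gamma\times[-r,r]}$ is injective.

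First I would show that $\Phi$ is a local diffeomorphism along the zero section $\Gamma\times\{0\}$. At a point $(X,0)$, the differential sends a tangent vector $v\in T_X\Gamma$ to $v$ and sends $\partial_t$ to $\mathbf{n}(X)$, i.e. $D\Phi_{(X,0)}(v,a)=v+a\,\mathbf{n}(X)$; since $v\perp\mathbf{n}(X)$ and $\mathbf{n}(X)\neq 0$, this linear map is injective, hence an isomorphism of $\mathbb{R}^3$. By the inverse function theorem, for each $X\in\Gamma$ there exist an open neighborhood $U_X\subseteq\Gamma$ of $X$ and a radius $\rho_X>0$ such that $\Phi$ restricts to a diffeomorphism, and in particular is injective, on $U_X\times(-\rho_X,\rho_X)$.

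The heart of the argument --- and the step I expect to require the most care --- is upgrading this purely local injectivity to injectivity on a \emph{uniform} slab $\Gamma\times[-r,r]$; here compactness of $\Gamma$ is essential. I would argue by contradiction: if no such $r$ existed, then for every $n$ there would be $X_n\neq Y_n$ in $\Gamma$ and parameters $|s_n|,|t_n|\le 1/n$ with $X_n+s_n\mathbf{n}(X_n)=Y_n+t_n\mathbf{n}(Y_n)$. By compactness of $\Gamma$ we may pass to subsequences with $X_n\to X_\ast$ and $Y_n\to Y_\ast$; letting $n\to\infty$ and using $s_n,t_n\to 0$ gives $X_\ast=Y_\ast=:Z$. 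Then for all large $n$ both $X_n,Y_n$ lie in $U_Z$ and both $s_n,t_n$ lie in $(-\rho_Z,\rho_Z)$, so $(X_n,s_n)\neq (Y_n,t_n)$ are points of $U_Z\times(-\rho_Z,\rho_Z)$ with the same image under $\Phi$, contradicting injectivity of $\Phi$ on $U_Z\times(-\rho_Z,\rho_Z)$. Hence a uniform $r>0$ exists, and by the reduction in the first paragraph the segments $N_X(r)$ are pairwise disjoint, which is the claim.

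Alternatively, one may simply invoke Federer's theorem on sets of positive reach (the cited reference \cite{1959Federer}) applied to the compact $C^{1,1}$ hypersurface $\Gamma$; the argument above is essentially a self-contained proof that such a hypersurface has positive reach, and I would present it in that form so that the explicit dependence of $r$ on $\Gamma$ (ultimately on the curvature of $\Gamma$, as will be quantified in the subsequent geometric estimates) is transparent.
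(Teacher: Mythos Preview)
Your argument is correct: the normal-map/inverse-function-theorem step together with the compactness contradiction is the standard self-contained route to the tubular neighborhood theorem, and nothing in it is flawed.

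However, the paper does not actually prove this lemma. It merely states the result and, immediately afterward, writes ``The existence of the $r$-tubular neighborhood of a smooth surface is given in \cite{1959Federer},'' treating the lemma as a quotation of a known fact about sets of positive reach. In other words, your final paragraph --- ``one may simply invoke Federer's theorem'' --- \emph{is} the paper's entire proof. What your detailed argument buys over the paper's bare citation is a transparent, elementary justification that a compact $C^2$ hypersurface has positive reach, which is pedagogically useful and makes clear where compactness and smoothness enter; what the paper's approach buys is brevity, since the lemma is only background for the geometric estimates that follow and is not the paper's own contribution.
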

The $r$-tubular neighborhood of $\Gamma$ is defined as the set $U_{\Gamma}(r)=\cup_{X\in\Gamma} N_X(r)$.
The existence of the $r$-tubular neighborhood of a smooth surface is given in \cite{1959Federer}. Define $r_{\Gamma}$ as the largest $r$ such that Lemma \ref{tubular} holds, and this positive number $r_{\Gamma}$ is referred as the reach of the surface $\Gamma$ in some literature \cite{2002MorvanThibert}. In the following discussion, we assume that

\begin{itemize}[leftmargin=30pt]
  \item[(\textbf{H1})] $h<r_{\Gamma}/(3\sqrt{3})$. 
  \item[(\textbf{H2})] The interface surface $\Gamma$ can not intersect any edge $e\in\mathcal{E}_h$ at more than one point.
  \item[(\textbf{H3})] The interface surface $\Gamma$ can not intersect the boundary of any face $f\in\mathcal{F}_h$ at more than two points.
\end{itemize}
We note that a similar assumption as (\textbf{H1}) about the $r$-tubular neighborhood has been used in a 2D unfitted mesh method \cite{2015GuzmanSanchezSarkisP1}. And the assumptions (\textbf{H2}) and (\textbf{H3}) basically mean the interface surface is resolved enough by the unfitted mesh, and these assumptions have been used in many works on unfitted meshes such as \cite{2015BurmanClaus,2016GuoLin,2002HansboHansbo,2005KafafyLinLinWang}. These assumptions can be satisfied when the interface surface is flat enough locally inside each interface element which holds in general when the mesh size is sufficiently small. In particular, for each interface element $T \in \mathcal{T}_h^i$, (\textbf{H1}) guarantees that its patch $\omega_T$ is inside the $r_\Gamma$-tubular neighborhood of the interface surface.

Based on the assumptions (\textbf{H2}) and (\textbf{H3}), an interface surface can only intersect a cubic interface element $T$ with at least three faces but no more than six faces. Therefore, by considering rotations, we classify the interface element configuration according to the number of interface faces of $T$: only one possible configuration for three interface faces as shown by Case 1 in Figure \ref{fig:subfig_cub}, two possible configurations for four interface faces as shown by Case 2 and Case3 in Figure \ref{fig:subfig_cub}, one possible configuration for five interface faces as shown by Case 4 in Figure \ref{fig:subfig_cub} and one possible configuration for six interface faces as shown by Case 5 in Figure \ref{fig:subfig_cub}. Moreover, it can be considered as certain limit situations of those interface configurations in Figure \ref{fig:subfig_cub} when some vertices of $T$ are on the interface surface; and our construction and analysis techniques developed below are readily extended to handle these situations. Therefore, in the following discussion, for the simplicity of presentations, we only discuss these five interface element configurations. This classification approach can be easily implemented to determine which configuration an interface element belongs to by counting the interface faces and location of vertices relative to the interface.

\begin{figure}[H]
\centering
\begin{subfigure}{.25\textwidth}
     \includegraphics[width=1.5in]{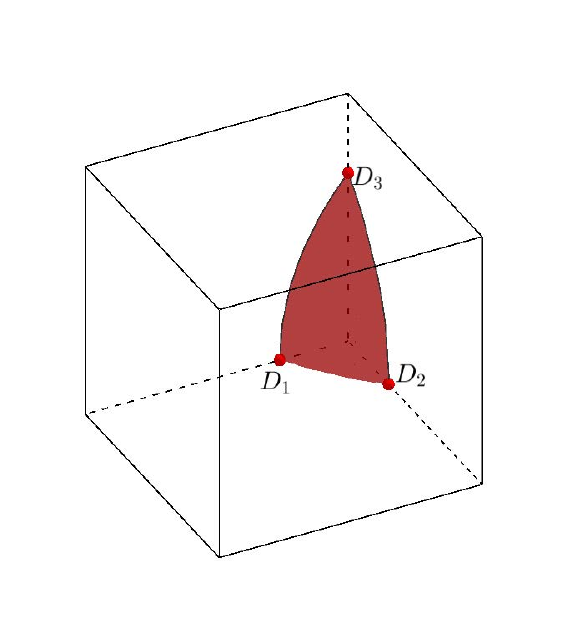}
     \label{cub_case1} 
     \caption{Case 1}
\end{subfigure}
\begin{subfigure}{.25\textwidth}
     \includegraphics[width=1.5in]{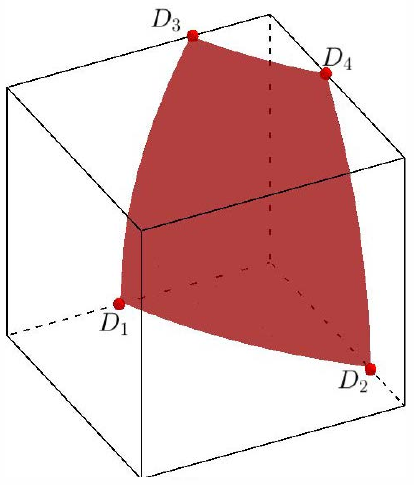}
     \label{cub_case2} 
     \caption{Case 2}
\end{subfigure}
 \begin{subfigure}{.25\textwidth}
     \includegraphics[width=1.5in]{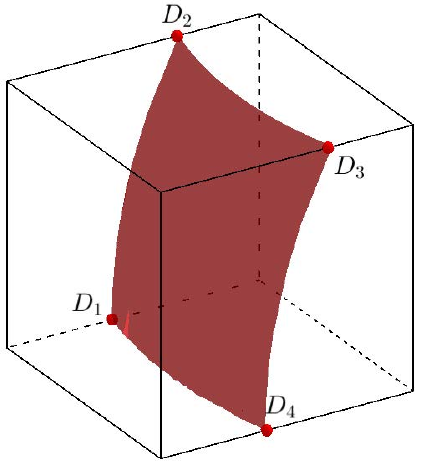}
     \label{cub_case3} 
     \caption{Case 3}
\end{subfigure}
 \begin{subfigure}{.25\textwidth}
     \includegraphics[width=1.5in]{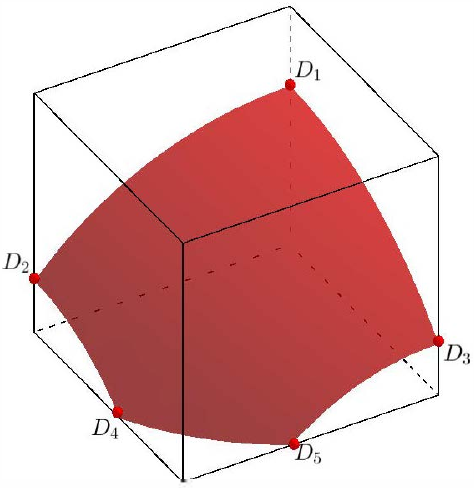}
     \label{cub_case4} 
     \caption{Case 4}
\end{subfigure}
\begin{subfigure}{.25\textwidth}
     \includegraphics[width=1.5in]{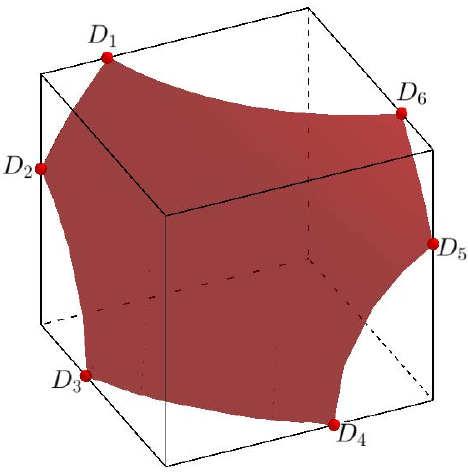}
     \label{cub_case5} 
     \caption{Case 5}
\end{subfigure}
     \caption{Possible Interface Element Configuration }
  \label{fig:subfig_cub} 
\end{figure}

The fundamental idea of the IFE methods is the employment of piecewise polynomials constructed on interface elements to satisfy
the jump conditions in a certain approximate sense. On each interface element $T$, a 2D IFE function is a piecewise polynomial defined according to the two subelements formed by the straight line connecting the intersection points of the interface and $\partial T$, and this
line is a natural approximation to the interface with sufficient accuracy, see \cite{2016GuoLin,2008HeLinLin,2004LiLinLinRogers} and the reference therein. However, we note that, in 3D, constructing a suitable linear approximation for an arbitrary interface surface is not as straightforward as the 2-D case because of at least two basic issues. The first one, already reported in \cite{2005KafafyLinLinWang}, is the issue that the intersection points of the interface and the edges of an interface element are usually not coplanar so that it is not always clear how to use these points to form a linear approximation to the interface surface inside this interface element. The second one concerns the accuracy for a plane to approximate the interface surface. We address these two issues in the following discussion.


We start from recalling some useful geometric quantities and estimates for the interface surface $\Gamma$ from \cite{2002MorvanThibert}. Denote the maximum curvature of $\Gamma$ by $\kappa$. Consider an arbitrary triangle $K = \bigtriangleup B_1B_2B_3$ with
$B_i \in \Gamma, i = 1, 2, 3$ and its normal $\bar{\mathbf{ n}}(K)$. Let $\mathcal{S}_{\Gamma}(K)$ be the subset of $\Gamma$ such that its projection onto the plane determined by $K$ is exactly $K$. To facilitate a simple presentation, we assume that the projection from $S_{\Gamma}(K)$ to $K$ is bijective. Let $\alpha_{\Gamma}(K)\in[0,\pi]$ be the maximum angle between $\bar{\mathbf{ n}}(K)$ and normal vectors of $\mathcal{S}_{\Gamma}(K)$, then we can let $\bar{\mathbf{ n}}(K)$ have the direction such that $\alpha_{\Gamma}(K)\in[0,\pi/2]$. Further define
\begin{align}
&\lambda(K) = \max\{\sin(\angle B_1B_2B_3), \sin(\angle B_2B_3B_1), \sin(\angle B_3B_1B_2) \}, \label{lambda(K)}\\
& l(K)=\max\{|\overline{B_1B_2}|, |\overline{B_2B_3}|, |\overline{B_3B_1}| \}, \label{l(K)}
\end{align}
and let $\mathcal{H}_{\Gamma}(K)$ be the Hausdorff distance between the set $K$ and $\mathcal{S}_{\Gamma}(K)$. Then, we recall Theorem 3 in \cite{2002MorvanThibert}:
\begin{thm}
\label{thm_angle_est}
Assume the projection from $\mathcal{S}_{\Gamma}(K)$ onto $K$ is bijective, $l(K)<r_{\Gamma}$ and $4\lambda(K)(1 - \kappa\mathcal{H}_{\Gamma}(K))^4 - \kappa^2l(K)^2 - 4\kappa l(K)>0$, then $\kappa\mathcal{H}_{\Gamma}(K)<1$ and
\begin{equation}
\label{angle_est_eq0}
\sin{(\alpha_{\Gamma}(K))} \leqslant \kappa l(K) \left( \frac{1}{1 - \kappa\mathcal{H}_{\Gamma}(K)} + \frac{\kappa ~ l(K) +4}{ 4\lambda(K)(1 - \kappa\mathcal{H}_{\Gamma}(K))^4 - \kappa^2l(K)^2 - 4\kappa l(K) } \right).
\end{equation}
\end{thm}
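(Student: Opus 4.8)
Since this is Theorem~3 of \cite{2002MorvanThibert}, we only outline the argument one would follow. Write $\mathbf n(X)$ for the unit normal of $\Gamma$ at $X\in\Gamma$, pick the vertex $B_1$ of $K$ whose interior angle has the largest sine among the three (so that this sine is $\lambda(K)$), and orient $\bar{\mathbf n}(K)$ so that its angle with $\mathbf n(B_1)$ is acute. The plan is to control $\sin\alpha_\Gamma(K)$ through the triangle inequality for the Gauss map,
\[
\sin\alpha_\Gamma(K)\ \le\ \sin\angle\!\big(\mathbf n(B_1),\bar{\mathbf n}(K)\big)\ +\ \sup_{q\in\mathcal S_\Gamma(K)}\sin\angle\!\big(\mathbf n(q),\mathbf n(B_1)\big),
\]
to estimate the two terms separately, and then to unwind their (circular) dependence on $\mathcal H_\Gamma(K)$.

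For the first term we show that each chord of $K$ deviates only slightly from the tangent plane $T_{B_1}\Gamma$. A reach-only estimate gives $|\langle\mathbf n(B_1),B_j-B_1\rangle|\le|\overline{B_1B_j}|^2/(2r_\Gamma)$, but since $r_\Gamma\le 1/\kappa$ this is too weak; instead one uses a genuinely curvature-based estimate, integrating the normal curvature ($\le\kappa$) along the geodesic of $\mathcal S_\Gamma(K)$ joining $B_1$ to $B_j$, and it is here that the Hausdorff distance enters, since that geodesic arc is comparable to the chord $\overline{B_1B_j}$ only up to a factor governed by $\mathcal H_\Gamma(K)$. This yields a bound of the form $|\langle\mathbf n(B_1),B_j-B_1\rangle|\lesssim\kappa l(K)\,|\overline{B_1B_j}|\,(1-\kappa\mathcal H_\Gamma(K))^{-2}$ for $j=2,3$. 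Decomposing the unit vector $\mathbf n(B_1)$ along the two edges of $K$ at $B_1$, whose Gram matrix has determinant $\lambda(K)^2$, we find that the component of $\mathbf n(B_1)$ in the plane of $K$ has length $\lesssim\kappa l(K)/\big(\lambda(K)(1-\kappa\mathcal H_\Gamma(K))^{2}\big)$, and that length equals $\sin\angle(\mathbf n(B_1),\bar{\mathbf n}(K))$. Keeping the constants sharp turns this into the term $\kappa l(K)\big(\kappa l(K)+4\big)/\big(4\lambda(K)(1-\kappa\mathcal H_\Gamma(K))^{4}-\kappa^2 l(K)^2-4\kappa l(K)\big)$ of \eqref{angle_est_eq0}, the higher powers of $(1-\kappa\mathcal H_\Gamma(K))$ coming from iterating the comparison between geodesic arcs and chords and, ultimately, from the bootstrap below.

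For the second term we use that the Gauss map of $\Gamma$ is $\kappa$-Lipschitz with respect to geodesic distance, so $\sin\angle(\mathbf n(q),\mathbf n(B_1))\le\kappa\,\mathrm{dist}_\Gamma(q,B_1)$; the nearest-point projection of the $r_\Gamma$-tubular neighborhood onto $\Gamma$, whose Lipschitz constant on the slab of half-width $\mathcal H_\Gamma(K)$ is $(1-\kappa\mathcal H_\Gamma(K))^{-1}$, converts the geodesic diameter of $\mathcal S_\Gamma(K)$ into $l(K)$ up to that same factor, giving the term $\kappa l(K)/(1-\kappa\mathcal H_\Gamma(K))$. Adding the two estimates produces exactly the right-hand side of \eqref{angle_est_eq0}, \emph{except} that $\mathcal H_\Gamma(K)$ still appears on the right. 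The step we expect to be the main obstacle is the bootstrap: one separately proves a bound of the form $\mathcal H_\Gamma(K)\lesssim\kappa l(K)^2$ (again by a reach/projection argument), substitutes it into the two estimates, and checks that the hypothesis $4\lambda(K)(1-\kappa\mathcal H_\Gamma(K))^{4}-\kappa^2 l(K)^2-4\kappa l(K)>0$ is precisely the condition under which the resulting implicit inequality can be solved, simultaneously forcing $\kappa\mathcal H_\Gamma(K)<1$ and leaving the stated bound; carrying the constants so that they land on \eqref{angle_est_eq0} rather than a cruder $O(\cdot)$ version is the delicate bookkeeping. An alternative route, avoiding explicit use of the normals of $\Gamma$, writes $\mathcal S_\Gamma(K)$ as a graph $z=\phi(x,y)$ over the plane of $K$ with $\phi$ vanishing at the three vertices, bounds $\sup_K|\nabla\phi|$ by a maximum-angle interpolation estimate (Rolle along two edges, then inversion of the $2\times2$ edge Gram matrix, giving $\lesssim l(K)\,\lambda(K)^{-1}\sup_K\|D^2\phi\|$), and bounds $\sup_K\|D^2\phi\|$ via the second fundamental form of the graph; the same circular dependence among $|\nabla\phi|$, $\mathcal H_\Gamma(K)$ and $\|D^2\phi\|$ must then be resolved.
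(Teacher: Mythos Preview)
The paper does not prove this theorem at all: it is simply quoted as Theorem~3 of \cite{2002MorvanThibert}, with no argument given. You correctly identify this and then go well beyond the paper by sketching the Morvan--Thibert proof, so there is nothing to compare on the paper's side.

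That said, your sketch contains a misreading of the statement. You write that after adding the two contributions one obtains the right-hand side of \eqref{angle_est_eq0} ``except that $\mathcal H_\Gamma(K)$ still appears on the right,'' and you then describe a bootstrap whose purpose is to eliminate $\mathcal H_\Gamma(K)$. But $\mathcal H_\Gamma(K)$ \emph{does} appear, explicitly, in the final bound \eqref{angle_est_eq0}; it is never removed. No bootstrap is needed to ``solve an implicit inequality'' for $\sin\alpha_\Gamma(K)$, because the stated estimate is already explicit in the geometric data $\kappa$, $l(K)$, $\lambda(K)$, $\mathcal H_\Gamma(K)$. The conclusion $\kappa\mathcal H_\Gamma(K)<1$ is a separate (and easier) assertion, not a by-product of resolving a circular dependence. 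So the overall two-term decomposition via the Gauss map and the use of the maximum-angle quantity $\lambda(K)$ to invert the edge Gram matrix are the right ingredients, but the ``main obstacle'' you flag is not actually present in this theorem; you may be conflating it with the way the paper later \emph{uses} the theorem (Lemma~\ref{lem_element_angle_est} and Theorem~\ref{lem_interf_element_est}), where $\mathcal H_\Gamma(K_T)$ is bounded by $\sqrt{3}h$ and then absorbed.
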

The estimate given by this theorem quantifies the flatness of the interface surface locally in terms of the angle between the normal vectors determined by $K = \bigtriangleup B_1B_2B_3$ and $\Gamma$. More importantly, assume the edges of an interface element $T\in\mathcal{T}^i_h$ intersect with
$\Gamma$ at $D_i$, $1 \leq i \leq I_T, 3 \leq I_T \leq 6$, see Figure \ref{fig:subfig_cub}, this quantification motivates us to construct a plane $\tau(T)$ to approximate $\Gamma\cap T$ as the one determined by the triangle $K_T = \bigtriangleup D_{j_1}D_{j_2}D_{j_3}$ with $D_{j_i}, i = 1, 2, 3$ chosen as follows:
\commentout{
passing three suitably chosen element-interface intersection points such that the
maximum angle of the triangle determined by these three element-interface intersection points is bounded above away from $\pi$. The analysis to be presented later shows that this maximum angle feature guarantees the plane constructed by the proposed procedure can approximate
 $T\cap \Gamma$ with sufficient accuracy. Specifically, let $D_i$, $1 \leq i \leq I_T, 3 \leq I_T \leq 6$ be the element-interface intersection points of an interface element $T\in\mathcal{T}^i_h$, see Figure \ref{fig:subfig_cub}, and we let $\tau(T)$ be the plane determined by
 $K_T = \bigtriangleup D_{j_1}D_{j_2}D_{j_3}$ with $D_{j_i}, i = 1, 2, 3$ chosen as follows:
}
\begin{itemize}[leftmargin=45pt]
\item[Case 1.] $D_{j_i} = D_i, i = 1, 2, 3$.
\item[Case 2.] $D_{j_i}, i = 1, 2, 3$ are chosen such that their distances to the non-interface edge isolated
by the interface surface $\Gamma$ from other non-interface edges of $T$ are the largest possible. For example, in Figure \ref{fig:subfig_interf}(\subref{inter_elem_case2}),
      this non-interface edge is $A_1A_5$ and we let $D_{j_1} = D_1, D_{j_2} = D_2, D_{j_3} = D_4$ because
      $D_2A_1\geqslant D_1A_1\geqslant D_4A_5\geqslant D_3A_5$.
\commentout{
  {\color{red}need to refine this part: passing through three of the intersection points with the three largest distance to the edge $A_1A_5$, e.g., the points $D_1$, $D_2$ and $D_4$ in Figure \ref{fig:subfig_interf}(\subref{inter_elem_case2}) with $D_2A_1\geqslant D_1A_1\geqslant D_4A_5\geqslant D_3A_5$.}
  }
\item[Case 3.] $D_{j_i}, i = 1, 2, 3$ are arbitrarily chosen from $D_i$, $1 \leq i \leq 4$.
\item[Case 4.] $D_{j_i}, i = 1, 2, 3$ are located on  the three parallel edges, e.g., $D_1$, $D_2$ and $D_3$ in Figure \ref{fig:subfig_interf}(\subref{inter_elem_case4}).
\item[Case 5.] $D_{j_i}, i = 1, 2, 3$ are located on  the three orthogonal edges, e.g., $D_1$, $D_3$, $D_5$ or $D_2$, $D_4$, $D_6$ in Figure \ref{fig:subfig_interf}(\subref{inter_elem_case5}).
\end{itemize}
\begin{figure}[H]
\centering
\begin{subfigure}{.25\textwidth}
     \includegraphics[width=1.5in]{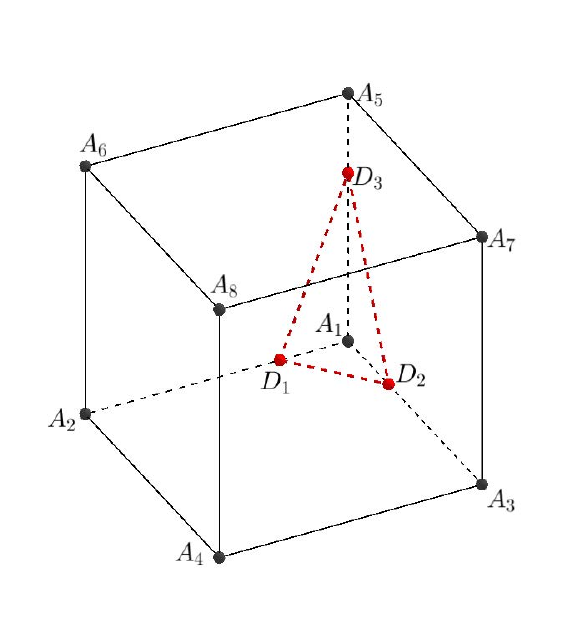}
     \caption{Case 1}
     \label{inter_elem_case1} 
\end{subfigure}
\begin{subfigure}{.25\textwidth}
     \includegraphics[width=1.7in]{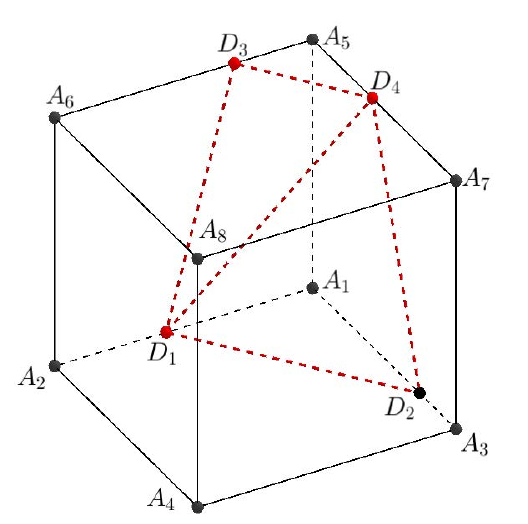}
     \caption{Case 2}
     \label{inter_elem_case2} 
\end{subfigure}
 \begin{subfigure}{.25\textwidth}
     \includegraphics[width=1.5in]{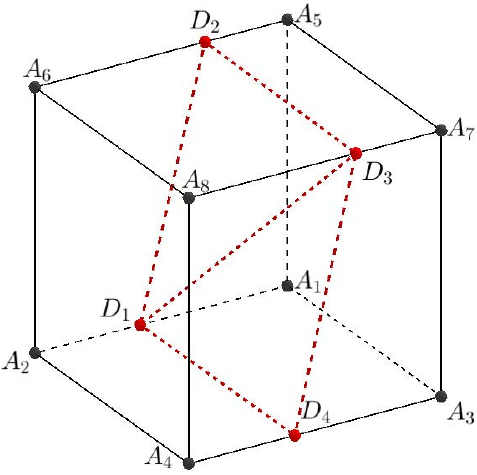}
     \caption{Case 3}
     \label{inter_elem_case3} 
\end{subfigure}
 \begin{subfigure}{.25\textwidth}
     \includegraphics[width=1.5in]{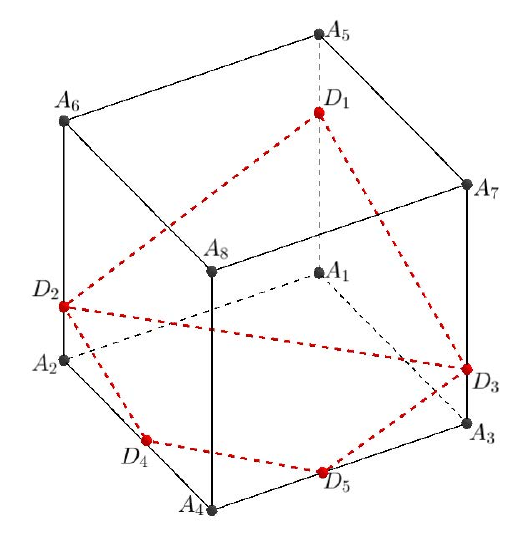}
     \caption{Case 4}
      \label{inter_elem_case4} 
\end{subfigure}
\begin{subfigure}{.25\textwidth}
     \includegraphics[width=1.5in]{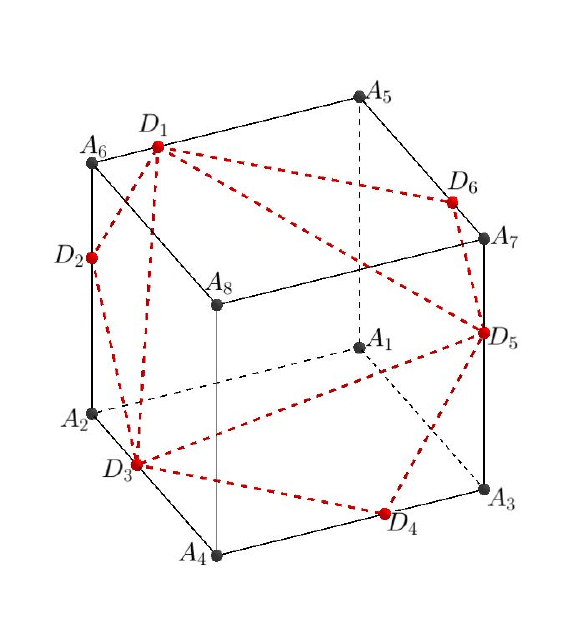}
     \caption{Case 5}
      \label{inter_elem_case5} 
\end{subfigure}
     \caption{Possible Interface Element Configuration }
  \label{fig:subfig_interf} 
\end{figure}
The key idea of the procedure proposed above is that the maximum angle of $K_T = \bigtriangleup D_{j_1}D_{j_2}D_{j_3}$ is bounded above away from $\pi$, and we will show that this maximum angle feature guarantees that the constructed plane
$\tau(T)$ can approximate the surface $\Gamma \cap T$ with a sufficient accuracy.

First, we use the results in \cite{2017ChenWeiWen} to derive bounds of the the maximum angle of $K_T = \bigtriangleup D_{j_1}D_{j_2}D_{j_3}$ in the following lemma.

\begin{lemma}
\label{lem_max_angle}
For each interface element $T\in\mathcal{T}^i_h$, the maximum angle of the triangle $K_T$ described above is bounded by $135^{\circ}$ regardless of the interface location.
\end{lemma}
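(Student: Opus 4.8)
The plan is to verify the inequality by a direct case analysis over the five interface element configurations of Figure~\ref{fig:subfig_interf}. Since these configurations and the selection rule for $K_T$ are invariant under the symmetries of the cube, I may work on the reference cube $T=[0,1]^3$, and it then suffices to show that none of the three interior angles of $K_T=\bigtriangleup D_{j_1}D_{j_2}D_{j_3}$ has cosine smaller than $-\tfrac1{\sqrt2}$. Throughout I will use two elementary facts: (i) $t\mapsto t/\sqrt{1+t^2}$ is increasing on $[0,\infty)$, so $t/\sqrt{1+t^2}\le 1/\sqrt2$ whenever $0\le t\le1$; and (ii) on any two parallel edges of $T$ the corresponding coordinates of the $D_i$'s differ by a number in $[-1,1]$. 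Applying Fact~(i) to the right coordinate difference is precisely what produces the ceiling of $135^\circ$.

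\textbf{The easy cases.} In Cases 1 and 5 the three vertices of $K_T$ lie on three mutually orthogonal edges of $T$ meeting at a single vertex $A$ (the corner cut off by $\Gamma$, or either opposite corner in Case 5); placing $A$ at the origin with those edges along the axes gives $D_{j_1}=(a,0,0)$, $D_{j_2}=(0,b,0)$, $D_{j_3}=(0,0,c)$ with $a,b,c\in(0,1]$, whence $\langle D_{j_2}-D_{j_1},D_{j_3}-D_{j_1}\rangle=a^2>0$ and symmetrically at the other vertices, so $K_T$ is acute. In Cases 3 and 4 the three vertices lie on three mutually parallel edges, which I orient along the $z$-axis and label so that $D_{j_1}$ is the vertex whose projection onto $\{z=0\}$ carries the right angle of the (right isosceles) projected triangle; writing $D_{j_1}=(0,0,z_1)$, $D_{j_2}=(1,0,z_2)$, $D_{j_3}=(0,1,z_3)$ with $z_i\in[0,1]$, the planar parts of the inner products at $D_{j_2}$ and at $D_{j_3}$ both equal $+1$ while the vertical parts are products of numbers in $[-1,1]$, so those two angles are nonobtuse; and at $D_{j_1}$,
\begin{equation*}
\cos\big(\angle D_{j_2}D_{j_1}D_{j_3}\big)=\frac{(z_2-z_1)(z_3-z_1)}{\sqrt{1+(z_2-z_1)^2}\,\sqrt{1+(z_3-z_1)^2}},
\end{equation*}
whose absolute value is at most $\tfrac12$ by Fact~(i). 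Thus the maximum angle of $K_T$ is below $90^\circ$ in Cases 1 and 5 and at most $120^\circ$ in Cases 3 and 4.

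\textbf{Case 2 — the sharp case, and the crux of the argument.} By the selection rule, two of the chosen points $D_{j_1},D_{j_2}$ lie on the two edges emanating from the vertex (call it $A_1$) whose two edge-intersection points are farthest from the isolated non-interface edge $A_1A_5$, and the third $D_{j_3}$ lies on one of the two edges emanating from $A_5$; since $A_5$ is the translate of $A_1$ along $A_1A_5$, that last edge is parallel to exactly one of the first two. Taking $A_1=(0,0,0)$, $A_5=(0,0,1)$ and writing $D_{j_1}=(p,0,0)$, $D_{j_2}=(0,q,0)$, $D_{j_3}=(0,s,1)$ with $p,q,s\in[0,1]$, one computes
\begin{equation*}
\langle D_{j_2}-D_{j_1},\,D_{j_3}-D_{j_1}\rangle=p^2+qs\ge0, \qquad \langle D_{j_1}-D_{j_3},\,D_{j_2}-D_{j_3}\rangle=1+s^2-sq\ge0,
\end{equation*}
so the angles at $D_{j_1}$ and $D_{j_3}$ are nonobtuse, while at $D_{j_2}$
\begin{equation*}
\cos\big(\angle D_{j_1}D_{j_2}D_{j_3}\big)=\frac{q(q-s)}{\sqrt{p^2+q^2}\,\sqrt{1+(s-q)^2}}\ \ge\ -\frac{|s-q|}{\sqrt{1+(s-q)^2}}\ \ge\ -\frac1{\sqrt2},
\end{equation*}
where the first inequality uses $\sqrt{p^2+q^2}\ge q$ and the second uses Fact~(i) with $|s-q|\le1$. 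Hence $\angle D_{j_1}D_{j_2}D_{j_3}\le135^\circ$; the remaining admissible selections of three of the four intersection points in Case 2 are disposed of by the same computation after permuting coordinates. This completes the case analysis, and the bound is sharp, being approached in the limit $p\to0$, $q\to0$, $s\to1$, independently of the interface location.

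\textbf{Main obstacle.} Cases 1 and 5 are immediate and Cases 3--4 cost only Fact~(i); the real work is Case 2, where one must (a) use the selection rule and the translation structure at $A_1A_5$ to see that the chosen triangle has two vertices on edges parallel to a third edge, so that only one angle can be obtuse, and (b) keep the estimate of that angle tight through $\sqrt{p^2+q^2}\ge q$ rather than a cruder bound — any slack there would only give a larger ceiling than $135^\circ$. These are, in effect, the same inequalities that underlie the angle estimates of \cite{2017ChenWeiWen}, so an alternative route is to quote those estimates and merely check that each of the five selection rules meets their hypotheses.
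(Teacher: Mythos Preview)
Your direct case analysis is a reasonable and more self-contained route than the paper's own proof, which simply quotes the angle bounds from Remark~3.4 of \cite{2017ChenWeiWen}. Cases~1--4 are handled correctly: your bounds of $90^\circ$, $135^\circ$, $120^\circ$, $120^\circ$ are valid upper bounds (even if looser than the paper's $90^\circ$, $135^\circ$, $101.54^\circ$, $101.54^\circ$), and your Case~2 computation is exactly where the $135^\circ$ ceiling comes from.

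However, your treatment of Case~5 contains a genuine error. In Case~5 the interface separates $\{A_1,A_2,A_3,A_5\}$ from $\{A_4,A_6,A_7,A_8\}$ (in the labeling \eqref{vertice}), so the six interface edges are $A_2A_4,\,A_2A_6,\,A_3A_4,\,A_3A_7,\,A_5A_6,\,A_5A_7$. A direct check shows that \emph{each vertex of the cube is incident to at most two of these six edges}; hence no three interface edges---in particular not the three mutually orthogonal ones prescribed by the selection rule---can meet at a common vertex. Your parametrization $D_{j_1}=(a,0,0)$, $D_{j_2}=(0,b,0)$, $D_{j_3}=(0,0,c)$ therefore does not describe $K_T$ in Case~5, and the acuteness conclusion is in fact false: taking for instance $D_{j_1}=(d_1,1,0)\in A_3A_4$, $D_{j_2}=(1,0,d_2)\in A_2A_6$, $D_{j_3}=(0,d_3,1)\in A_5A_7$ and setting $d_1=\tfrac12$, $d_2=0$, $d_3=1$ gives $\cos(\angle D_{j_2}D_{j_1}D_{j_3})=-\tfrac15$, i.e.\ an angle of about $101.54^\circ$, which is exactly the bound the paper lists for this case. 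The lemma's $135^\circ$ ceiling is of course still safe, but Case~5 must be reworked with the correct pairwise-skew edge geometry; once the right coordinates are written down, bounding the cosine below by $-\tfrac15$ is a short calculus exercise in the spirit of your Case~3/4 argument.
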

\begin{proof}
We refer readers to a detailed discussion of the maximum angle associated to a cubic interface element given by Remark 3.4 in \cite{2017ChenWeiWen}. Here, we only recall the possible maximum angles of the triangle $K_T$ for the cases in Figure \ref{fig:subfig_interf}:
\begin{itemize}[leftmargin=45pt]
  \item[Case 1.] $\angle D_3D_1D_2$, $\angle D_1D_2D_3$, $\angle D_2D_3D_1\leqslant 90^{\circ}$.
  \item[Case 2.] $\angle D_4D_1D_2\leqslant 90^{\circ}$, $D_1D_2D_4\leqslant135^{\circ}$, $\angle D_1D_4D_2\leqslant 63.6780^{\circ}$.
  \item[Case 3.] $\angle D_3D_1D_4$, $\angle D_1D_3D_4 \leqslant 63.6780^{\circ}$, $\angle D_1D_4D_3 \leqslant 101.5370^{\circ}$.
  \item[Case 4.] $\angle D_1D_2D_3$, $\angle D_2D_3D_1 \leqslant 63.6780^{\circ}$, $\angle D_3D_1D_2 \leqslant 101.5370^{\circ}$.
  \item[Case 5.] $\angle D_5D_1D_3$, $\angle D_1D_3D_5$, $\angle D_3D_5D_1 \leqslant 101.5370^{\circ}$.
\end{itemize}
\end{proof}
Consider an auxiliary function $f(t)=4\sin{(135^{\circ})}(1-\sqrt{3}t)^4-3t^2-4\sqrt{3}t$ and let $\epsilon_0$ be the smallest positive zero of $f$. Then $\epsilon_0\approx0.134$, and $f(t)$ is positive and decreasing over $(0,\epsilon_0)$. Using this auxiliary function, we can derive a bound for $\alpha_{\Gamma}(K_T)$ in the following lemma.
\commentout{
Now, we recall $\alpha_{\Gamma}(\triangle_T)$ is the maximum angle between $\bar{\mathbf{ n}}(\triangle_T)$ and the normal vectors to $\mathcal{S}_{\Gamma}(\triangle_T)$. Define a function $f(t)=4\sin{(135^{\circ})}(1-\sqrt{3}t)^4-3t^2-4\sqrt{3}t$. Let $\epsilon_0\approx0.134$ be a zero of $f(t)$. One can verify that $f(t)$ is decreasing over $(0,\epsilon_0)$. Then we can establish an estimate of $\alpha_{\Gamma}(\triangle_T)$ in the following.
}
\begin{lemma}
\label{lem_element_angle_est}
Let $\mathcal{T}_h$ be a Cartesian mesh satisfying \textbf{Assumptions} (\textbf{H1})-(\textbf{H3}) sufficiently fine such that
$\kappa h \leqslant \epsilon< \epsilon_0$ for a certain positive number $\epsilon$. If $T\in\mathcal{T}^i_h$ is such that
the projection from $\mathcal{S}_{\Gamma}(K_T)$ onto $K_T$ is bijective, then
\begin{equation}
\label{element_angle_est_eq1}
\sin{(\alpha_{\Gamma}(K_T))} \leqslant C(\epsilon) \kappa h~~ \text{with} ~~ C(\epsilon) = \sqrt{3}\left( \frac{1}{1 - \sqrt{3}\epsilon} + \frac{4 + \sqrt{3}\epsilon }{ f(\epsilon) } \right) > 0.
\end{equation}
\end{lemma}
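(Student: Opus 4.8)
The plan is to apply Theorem \ref{thm_angle_est} to the triangle $K_T$ and then bound each quantity appearing in \eqref{angle_est_eq0} in terms of $\kappa$ and $h$. First I would record the elementary geometric bounds that hold for any interface element of a Cartesian mesh with edge length at most $h$: since $K_T = \triangle D_{j_1}D_{j_2}D_{j_3}$ has all three vertices on edges of a box whose edges have length $\leq h$, its diameter satisfies $l(K_T) \leq \sqrt{3}\,h$ (the diagonal of the box), hence $\kappa\, l(K_T) \leq \sqrt{3}\,\kappa h \leq \sqrt{3}\,\epsilon$. Next, from Lemma \ref{lem_max_angle} the maximum angle of $K_T$ is at most $135^\circ$, and since $\sin$ is increasing on $[0,90^\circ]$ and the two non-maximal angles each lie in $(0,90^\circ]$ while the maximal one lies in $(0^\circ,135^\circ]$, the smallest of the three sines is achieved at the angle closest to $0$ or to $135^\circ$; in all five cases listed in the proof of Lemma \ref{lem_max_angle} one checks that $\lambda(K_T) = \max_i \sin(\text{angle}_i) \geq \sin(135^\circ) = \tfrac{\sqrt2}{2}$, because in every case at least one of the angles lies in $[45^\circ,135^\circ]$. (Actually the cleanest route: the largest angle is $\leq 135^\circ$, so the other two sum to $\geq 45^\circ$, so one of them is $\geq 22.5^\circ$; but to match the constant $f(\epsilon)$ with $\sin 135^\circ$ I instead use that the largest angle $\theta$ itself satisfies $\sin\theta \geq \sin 135^\circ$ whenever $\theta \in [45^\circ,135^\circ]$, and if $\theta < 45^\circ$ then the triangle is acute with all angles $> 45^\circ$ forcing $\lambda(K_T) > \sin 45^\circ = \sin 135^\circ$ — either way $\lambda(K_T) \geq \sin 135^\circ$.)

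With these bounds in hand, I would verify the hypothesis of Theorem \ref{thm_angle_est}, namely $4\lambda(K_T)(1-\kappa\mathcal{H}_\Gamma(K_T))^4 - \kappa^2 l(K_T)^2 - 4\kappa l(K_T) > 0$. Using $\lambda(K_T) \geq \sin 135^\circ$, $\kappa l(K_T) \leq \sqrt3\,\epsilon$, and the crude bound $\mathcal{H}_\Gamma(K_T) \leq l(K_T)$ (so $\kappa\mathcal{H}_\Gamma(K_T) \leq \sqrt3\,\epsilon$, and one should first argue $\sqrt3\epsilon<1$ so the factor is positive — indeed $\epsilon<\epsilon_0\approx0.134$ gives $\sqrt3\epsilon<0.24$), the left side is bounded below by $4\sin(135^\circ)(1-\sqrt3\,\epsilon)^4 - 3\epsilon^2 - 4\sqrt3\,\epsilon = f(\epsilon)$, which is positive because $f$ is positive and decreasing on $(0,\epsilon_0)$ and $\epsilon < \epsilon_0$. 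Hence Theorem \ref{thm_angle_est} applies and moreover gives $\kappa\mathcal{H}_\Gamma(K_T) < 1$.

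Finally I would substitute into \eqref{angle_est_eq0}. The first term in the parenthesis, $\tfrac{1}{1-\kappa\mathcal{H}_\Gamma(K_T)}$, is bounded by $\tfrac{1}{1-\sqrt3\,\epsilon}$ using monotonicity in $\mathcal{H}_\Gamma(K_T)$ and the bound $\kappa\mathcal{H}_\Gamma(K_T)\leq\sqrt3\,\epsilon$. For the second term, the numerator $\kappa l(K_T) + 4 \leq \sqrt3\,\epsilon + 4$, and the denominator is exactly the expression just shown to be $\geq f(\epsilon) > 0$, so the second term is at most $\tfrac{4+\sqrt3\,\epsilon}{f(\epsilon)}$. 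Pulling out the prefactor $\kappa l(K_T) \leq \sqrt3\,\kappa h$ then yields
\begin{equation*}
\sin(\alpha_\Gamma(K_T)) \leq \sqrt3\,\kappa h\left(\frac{1}{1-\sqrt3\,\epsilon} + \frac{4+\sqrt3\,\epsilon}{f(\epsilon)}\right) = C(\epsilon)\,\kappa h,
\end{equation*}
which is the claim. The main obstacle — really the only nonroutine point — is the lower bound $\lambda(K_T) \geq \sin 135^\circ$: it requires going case by case through the angle lists in Lemma \ref{lem_max_angle} to confirm that in each configuration at least one angle of $K_T$ has sine no smaller than $\sin 135^\circ$, and care is needed because $\lambda(K)$ in \eqref{lambda(K)} is the \emph{maximum} of the three sines, so one is really just exhibiting one good angle per case rather than controlling all of them. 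Everything else is monotone substitution of the uniform bounds $l(K_T)\leq\sqrt3 h$ and $\kappa h\leq\epsilon$ into the Morvan–Thibert estimate.
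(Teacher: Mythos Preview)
Your argument is correct and follows the same route as the paper: bound $l(K_T)\le\sqrt3\,h$, $\lambda(K_T)\ge\sin135^\circ$, and $\kappa\,\mathcal H_\Gamma(K_T)\le\sqrt3\,\epsilon$, verify that the positivity hypothesis of Theorem~\ref{thm_angle_est} reduces to $f(\epsilon)>0$, and then substitute monotonically into \eqref{angle_est_eq0}. Two small remarks: the paper gets $\mathcal H_\Gamma(K_T)\le\sqrt3\,h$ directly from the diameter of $T$ rather than through your intermediate claim $\mathcal H_\Gamma(K_T)\le l(K_T)$, which is not self-evident (the height of $\mathcal S_\Gamma(K_T)$ above the plane of $K_T$ is not a~priori controlled by $l(K_T)$ alone); and your discussion of $\lambda(K_T)\ge\sin135^\circ$ is more elaborate than needed---since the largest angle of any triangle is at least $60^\circ$ and here at most $135^\circ$ by Lemma~\ref{lem_max_angle}, its sine already satisfies $\sin\theta\ge\sin135^\circ$, and $\lambda(K_T)$ is the \emph{maximum} of the three sines.
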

\begin{proof}
The proof follows basically from Theorem \ref{thm_angle_est} applied to $K= K_T$. We first need to verify the conditions of Theorem \ref{thm_angle_est}. Let $T$ be an arbitrary interface element.
We note that the diameter of $T$ is $\sqrt{3}h$; hence,  by \eqref{l(K)} and (\textbf{H1}), $l(K_T)\leqslant \sqrt{3}h < r_{\Gamma}$ and $\mathcal{H}_{\Gamma}(K_T)\leqslant \sqrt{3}h$. According to \eqref{lambda(K)} and Lemma \ref{lem_max_angle}, we have $\lambda(K_T)\geqslant \sin(135^{\circ})$. Then, the condition $\kappa h \leqslant \epsilon < \epsilon_0$ implies
\begin{equation}
\begin{split}
\label{element_angle_est_eq2}
4\lambda(K_T)(1 - \kappa\mathcal{H}_{\Gamma}(K_T))^4 - \kappa^2l(K_T)^2 - 4\kappa l(K_T) \geqslant 4\sin{(135^{\circ})}( 1 - \sqrt{3}\epsilon )^4 - 3\epsilon^2  - 4\sqrt{3}\epsilon > f(\epsilon_0) = 0.
\end{split}
\end{equation}
Therefore, estimate \eqref{angle_est_eq0} given in Theorem \ref{thm_angle_est} holds for $K = K_T$ so that we have
\begin{equation}
\label{element_angle_est_eq3}
\sin{(\alpha_{\Gamma}(K_T))} \leqslant \kappa h \sqrt{3}\left( \frac{1}{1 - \sqrt{3}\epsilon} + \frac{4 + \sqrt{3}\epsilon }{4\sin{(135^{\circ})}( 1 - \sqrt{3}\epsilon )^4 - 3\epsilon^2  - 4\sqrt{3}\epsilon} \right) = C(\epsilon) \kappa h.
\end{equation}
\end{proof}

\begin{rem}
We note that the function $C(\epsilon)$ is also increasing over $(0,\epsilon_0)$.
Furthermore, because of the orientation $\alpha_{\Gamma}(K_T)\in [0,\pi/2]$, \eqref{element_angle_est_eq1} controls the size of the angle $\alpha_{\Gamma}(K_T)$, i.e., how much the normal vectors of $\mathcal{S}_{\Gamma}(K_T)$ can vary from the normal vector $\bar{\mathbf{ n}}(K_T)$ of the triangle $K_T$, and this actually quantifies the flatness of $\mathcal{S}_{\Gamma}(K_T)$.
\end{rem}

We are now ready to investigate how well the plane $\tau(T)$ determined by the triangle $K_T$ can approximate
$\Gamma \cap T$ or $\Gamma \cap \omega_T$ on an interface element $T\in \mathcal{T}_h^i$.
For this purpose, we further denote $\alpha_T$($\alpha_{\omega_T}$) as the maximum angle between $\bar{\mathbf{ n}}(K_T)$ and normal vectors to $\Gamma\cap T$($\Gamma\cap\omega_T$). By definition, we have $\alpha_{\Gamma}(K_T)\leqslant\alpha_T\leqslant\alpha_{\omega_T}$.

\begin{figure}[H]
\centering
\begin{subfigure}{.25\textwidth}
     \includegraphics[width=2in]{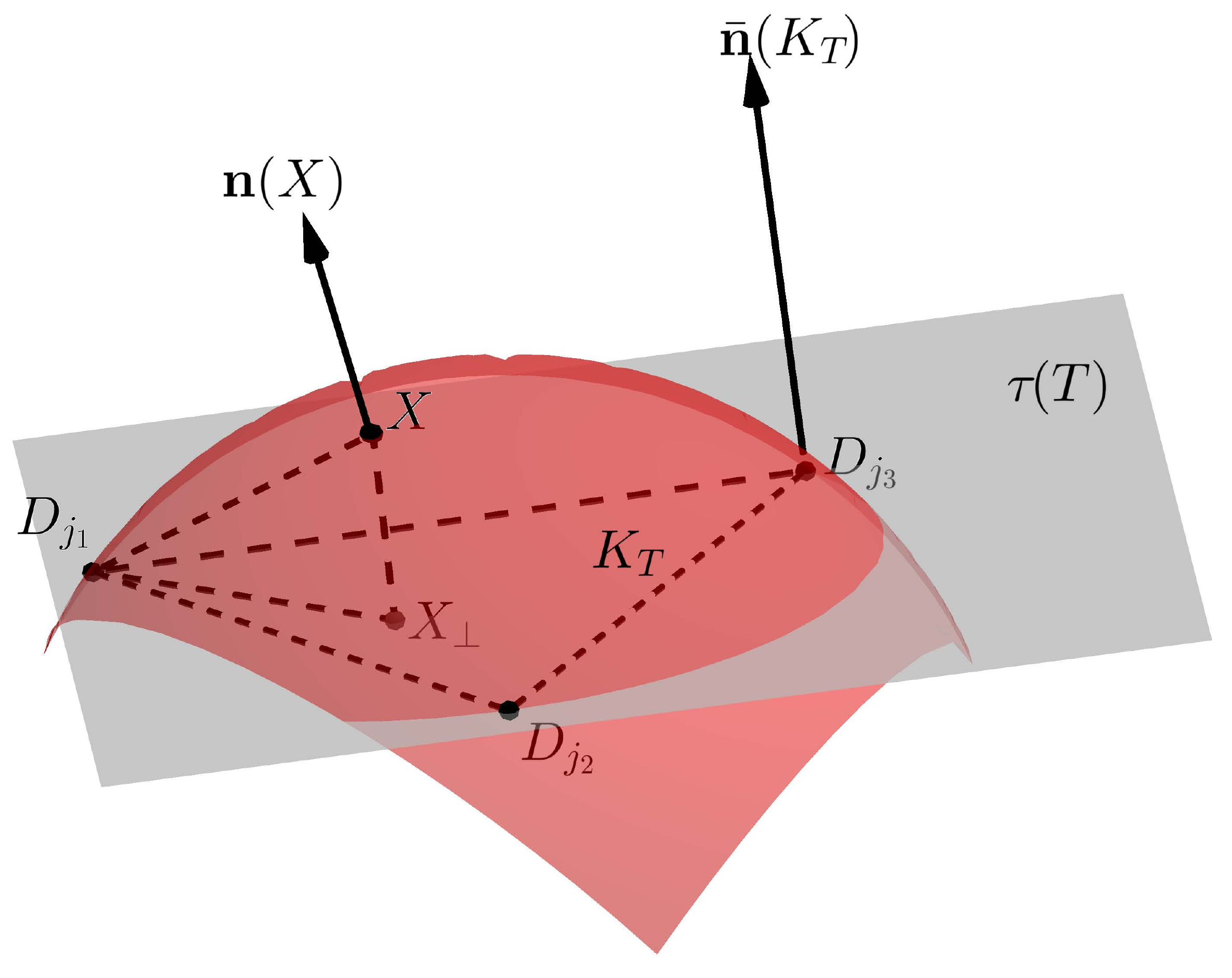}
     \caption{$X$ belongs to $\mathcal{S}_{\Gamma}(K_T)$}
     \label{surface_est_1} 
\end{subfigure}
~~~~~~~~~~~~~~~~
\begin{subfigure}{.25\textwidth}
     \includegraphics[width=2in]{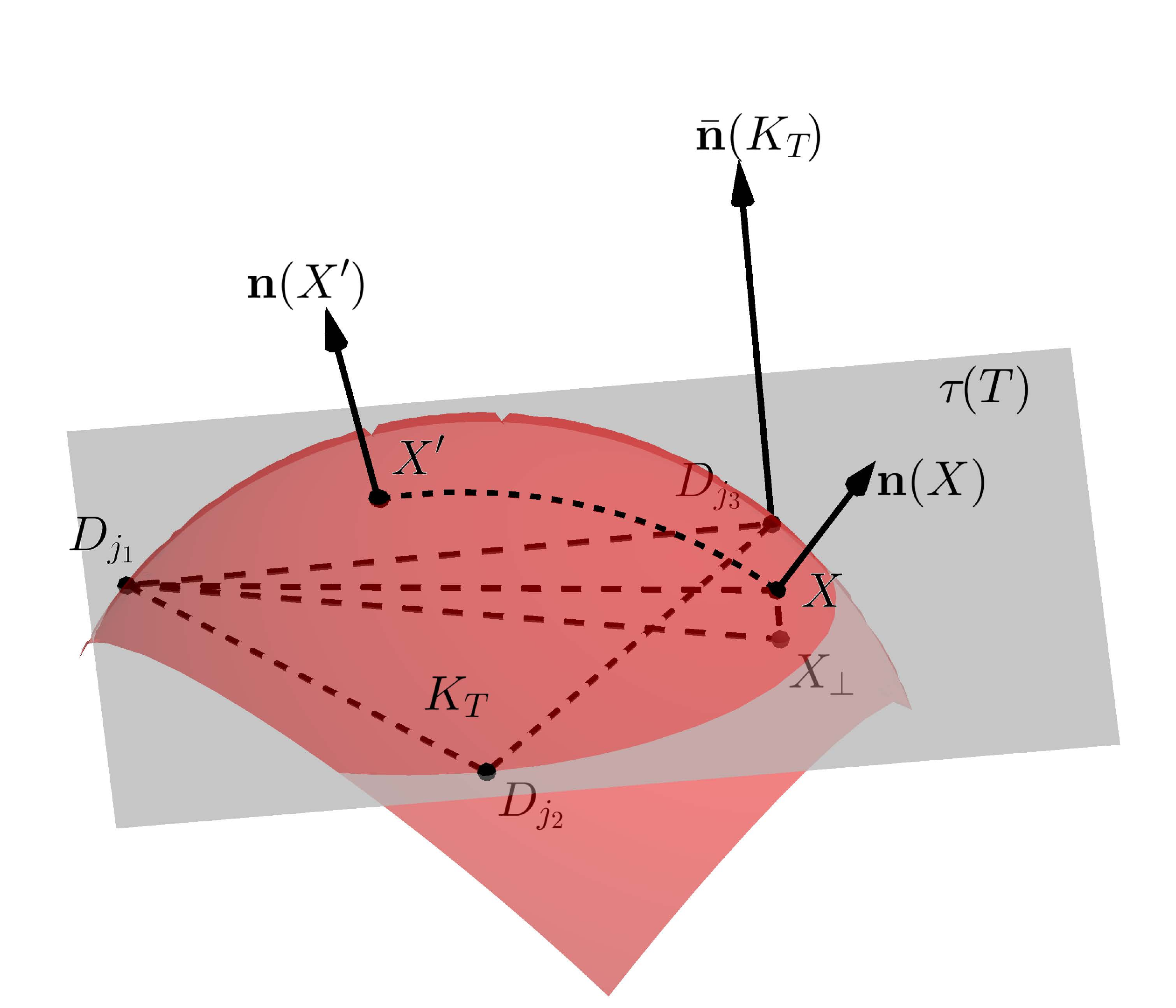}
     \caption{$X$ belongs to $\Gamma\cap T$ but not on $\mathcal{S}_{\Gamma}(K_T)$}
     \label{surface_est_2} 
\end{subfigure}
     \caption{Geometry of an interface surface and the plane $\tau$}
  \label{fig:surface_geo} 
\end{figure}

\begin{thm}
\label{lem_interf_element_est}
Let $\mathcal{T}_h$ be  a Cartesian mesh whose mesh size is small enough such that the \textbf{Assumptions} (\textbf{H1})-(\textbf{H3}) hold. If $T \in \mathcal{T}_h^i$ is such that $\kappa h\leqslant \epsilon<0.0836$ and
$\alpha_T\leqslant\pi/2$ (or $\kappa h\leqslant \epsilon<0.0288$ and
$\alpha_{\omega_T}\leqslant\pi/2$), then there exist constants $C$ depending only on $\epsilon$ such that the following estimates hold for every point $X\in\Gamma\cap T$ (or every point $X\in\Gamma\cap \omega_T$):
\commentout{
for each interface element $T$ or $\kappa h\leqslant \epsilon<0.0288$, $\alpha_{\omega_T}\leqslant\pi/2$ for each patch $\omega_T$, then there exist constants $C$ depending only on $\epsilon$ such that, for every point $X\in\Gamma\cap T$ or $X\in \omega_T\cap\Gamma$, the following estimates hold
}
\begin{subequations}
\label{interf_element_est_eq0}
\begin{align}
    & \| X - X_{\bot} \| \leqslant C \kappa h^2, \label{interf_element_est_eq0_dist} \\
    &  \| \mathbf{ n}(X) - \bar{\mathbf{ n}}(K_T) \| \leqslant C \kappa h, \label{interf_element_est_diff} \\
    &  \mathbf{ n}(X) \cdot \bar{\mathbf{ n}}(K_T) \geqslant 1 - C \kappa^2 h^2,  \label{interf_element_est_prod}
\end{align}
\end{subequations}
where $X_{\bot}$ is the projection of $X$ onto $\tau(T)$ and $\mathbf{ n}(X)$ is the normal vector to $\Gamma$ at $X$.
\end{thm}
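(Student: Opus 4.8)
The three bounds are all manifestations of one fact: the Gauss map of a $C^2$ surface is Lipschitz with a constant controlled by the curvature $\kappa$, while on the scale of an interface element (or its patch) the intrinsic distance on $\Gamma$ is comparable to the Euclidean one. My plan is to reduce everything to a single ``height function along a geodesic'' computation.

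\emph{Step 1: the Gauss map on $T$ and $\omega_T$.} I would first record the estimate --- elementary for a $C^2$ surface of reach $r_\Gamma$, and also implicit in \cite{2002MorvanThibert} --- that whenever $X,Y\in\Gamma$ have $\|X-Y\|$ below a fixed fraction of $r_\Gamma$, the minimizing geodesic joining them on $\Gamma$ is unique, has length comparable to $\|X-Y\|$, stays within an $O(\|X-Y\|)$-neighborhood of $X$, and $\|\mathbf{n}(X)-\mathbf{n}(Y)\|\le C\kappa\|X-Y\|$. Under (\textbf{H1}) both $T$ and $\omega_T$ lie in $U_\Gamma(r_\Gamma)$ and have diameters at most $\sqrt3\,h$ and $3\sqrt3\,h$, so these comparisons apply to every pair of points of $\Gamma\cap T$ (resp.\ $\Gamma\cap\omega_T$); the numerical thresholds $\epsilon<0.0836$ and $\epsilon<0.0288$ (whose ratio is essentially the ratio $3$ of the two diameters) are what make the ensuing constants finite and keep all the relevant normals within $\pi/2$ of $\bar{\mathbf{n}}(K_T)$, in accordance with the standing hypotheses $\alpha_T\le\pi/2$ and $\alpha_{\omega_T}\le\pi/2$, which also fix the orientation of $\mathbf{n}$.

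\emph{Step 2: the normal estimates \eqref{interf_element_est_diff}--\eqref{interf_element_est_prod}.} I would fix a vertex, say $D_{j_1}$, of $K_T$; it lies on $\Gamma$ and in the closure of $\mathcal{S}_\Gamma(K_T)$, so by the definition of $\alpha_\Gamma(K_T)$ and Lemma \ref{lem_element_angle_est} one has $\sin\bigl(\angle(\mathbf{n}(D_{j_1}),\bar{\mathbf{n}}(K_T))\bigr)\le C(\epsilon)\kappa h$, hence, $\kappa h$ being small, $\|\mathbf{n}(D_{j_1})-\bar{\mathbf{n}}(K_T)\|\le C\kappa h$. For an arbitrary $X\in\Gamma\cap T$ (or $\Gamma\cap\omega_T$) --- in particular for $X$ outside $\mathcal{S}_\Gamma(K_T)$, as in Figure \ref{surface_est_2} --- Step 1 gives $\|\mathbf{n}(X)-\mathbf{n}(D_{j_1})\|\le C\kappa h$, and the triangle inequality yields \eqref{interf_element_est_diff}. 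Then \eqref{interf_element_est_prod} is immediate from $\mathbf{n}(X)\cdot\bar{\mathbf{n}}(K_T)=1-\tfrac12\|\mathbf{n}(X)-\bar{\mathbf{n}}(K_T)\|^2$, both vectors being unit.

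\emph{Step 3: the distance estimate \eqref{interf_element_est_eq0_dist}.} Consider the affine height function $\phi(Z)=(Z-D_{j_1})\cdot\bar{\mathbf{n}}(K_T)$, which vanishes at $D_{j_1}\in\tau(T)$ and satisfies $|\phi(X)|=\|X-X_\bot\|$ for every $X$. Restricted to $\Gamma$, its tangential gradient at $Z$ is $\bar{\mathbf{n}}(K_T)-(\bar{\mathbf{n}}(K_T)\cdot\mathbf{n}(Z))\,\mathbf{n}(Z)$, whose norm equals $\sqrt{1-(\bar{\mathbf{n}}(K_T)\cdot\mathbf{n}(Z))^2}\le\|\mathbf{n}(Z)-\bar{\mathbf{n}}(K_T)\|\le C\kappa h$ by Step 2, applied at each point $Z$ of the connecting geodesic (each such $Z$ being within $O(h)$ of $D_{j_1}$). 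Integrating $\nabla_\Gamma\phi$ along the minimizing geodesic from $D_{j_1}$ to $X$, of length $O(h)$ by Step 1, gives $|\phi(X)|=|\phi(X)-\phi(D_{j_1})|\le C\kappa h\cdot O(h)=C\kappa h^2$, which is \eqref{interf_element_est_eq0_dist}. The only genuinely geometric work, and the main obstacle, is Step 1: pinning down the comparability of geodesic and Euclidean distance together with the Lipschitz bound for the Gauss map on the patch scale with explicit constants, and checking that the connecting geodesics never leave the region where $\Gamma$ is controlled --- this is exactly what forces the small quantitative thresholds on $\kappa h$.
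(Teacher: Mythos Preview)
Your proposal is correct and follows essentially the same approach as the paper: both arguments combine Lemma~\ref{lem_element_angle_est} (to control $\mathbf{n}$ at a reference point of $\mathcal{S}_\Gamma(K_T)$) with the Lipschitz property of the Gauss map (the paper invokes Lemmas~2 and~4 of \cite{2002MorvanThibert} for this, exactly as you anticipate in Step~1) to obtain \eqref{interf_element_est_diff}--\eqref{interf_element_est_prod}, and then deduce \eqref{interf_element_est_eq0_dist} geometrically. The organizational differences are small: the paper splits into the cases $X\in\mathcal{S}_\Gamma(K_T)$ and $X\notin\mathcal{S}_\Gamma(K_T)$ and uses the one-line trigonometric identity $\|X-X_\bot\|=\sin(\angle XD_{j_1}X_\bot)\,\|X-D_{j_1}\|$ together with $\angle XD_{j_1}X_\bot\le\alpha_T$, whereas your height-function integration along a geodesic in Step~3 is a more explicit unpacking of the same bound and lets you avoid the case distinction by anchoring everything at the single vertex $D_{j_1}$.
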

\begin{proof}
We only prove these estimates for $X\in\Gamma\cap T$ because similar arguments apply to $X\in\Gamma\cap \omega_T$.
Let $\theta(\mathbf{ v}_1,\mathbf{ v}_2)\in[0,\pi]$ be the angle between any two vectors $\mathbf{ v}_1$ and $\mathbf{ v}_2$.
Consider a point $X\in\Gamma\cap T$ and its projection $X_{\bot}$ onto $\tau(T)$. If $X\in\mathcal{S}_{\Gamma}(K_T)$, then $X_{\bot}\in K_T$, as illustrated in Figure \ref{fig:surface_geo}(\subref{surface_est_1}), and $\angle XD_{j_1}X_{\bot} \leqslant \alpha_{\Gamma}(K_T)\leqslant \alpha_T \leqslant \pi/2$. We note that $h \kappa\leqslant \epsilon <\epsilon_0$ and the assumption $\alpha_T\leqslant \pi/2$ implies the projection of $\Gamma\cap T$ on to $\tau(T)$ is bijective; therefore, we can apply Lemma \ref{lem_element_angle_est} to have
\begin{equation}
\label{interf_element_est_eq1}
\| X - X_{\bot} \| = \sin(\angle XD_{j_1}X_{\bot} ) \| X - D_{j_1} \|\leqslant \sin(\alpha_{\Gamma}(K_T)) \sqrt{3}h \leqslant \sqrt{3}C(\epsilon)\kappa h^2.
\end{equation}
For every $X\in (\Gamma \cap T) - \mathcal{S}_{\Gamma}(K_T)$, we consider another point $X'\in\mathcal{S}_{\Gamma}(K_T)$. Then, by Lemma 2 and Lemma 4 in \cite{2002MorvanThibert}, we have
\begin{equation}
\label{interf_element_est_eq2}
\sin( \theta(\mathbf{ n}(X),\mathbf{ n}(X')) )\leqslant \kappa L_{\Gamma}(X,X')\leqslant \frac{\kappa }{1 -\sqrt{3}\kappa h } \| X - X' \| \leqslant \frac{\sqrt{3}}{1-\sqrt{3}\epsilon} \kappa h,
\end{equation}
where $L_{\Gamma}(X, X')$ is the geodesic distance between $X$ and $X'$ as shown by the dashed line on surface in Figure \ref{fig:surface_geo}(\subref{surface_est_2}). Note that $\theta(\mathbf{ n}(X),\bar{\mathbf{ n}}(K_T))\leqslant\pi/2$, $\theta(\mathbf{ n}(X),\mathbf{ n}(X')) \leqslant\pi/2$ and $\theta(\mathbf{ n}(X'),\bar{\mathbf{ n}}(K_T))\leqslant\pi/2$, we use \eqref{interf_element_est_eq2} and \eqref{element_angle_est_eq1} to obtain
\begin{equation}
\label{interf_element_est_eq4}
\sin(\theta(\mathbf{ n}(X),\bar{\mathbf{ n}}(K_T))) \leqslant \sin(\theta(\mathbf{ n}(X),\mathbf{ n}(X')))  +
\sin(\theta(\mathbf{ n}(X'),\bar{\mathbf{ n}}(K_T))) \leqslant \left( \frac{\sqrt{3}}{1 - \sqrt{3}\epsilon} + C(\epsilon) \right)\kappa h.
\end{equation}
which, together with \eqref{element_angle_est_eq1}, implies that
\begin{align}
&\sin(\alpha_T)  \leqslant \sup_{X \in \Gamma \cap T} \sin(\theta(\mathbf{ n}(X),\bar{\mathbf{ n}}(K_T))) \leqslant \left( \frac{\sqrt{3}}{1 - \sqrt{3}\epsilon} + C(\epsilon) \right)\kappa h. \label{interf_element_est_eq6}
\end{align}
Hence, for $X\in (\Gamma \cap T) - \mathcal{S}_{\Gamma}(K_T)$,  following an argument similar to \eqref{interf_element_est_eq1} we have
\begin{equation}
\label{interf_element_est_eq5}
\| X - X_{\bot} \| \leqslant \sin(\alpha_T) \sqrt{3}h \leqslant \left( \frac{3}{1 - \sqrt{3}\epsilon} + \sqrt{3}C(\epsilon) \right)\kappa h^2,
\end{equation}
and \eqref{interf_element_est_eq0_dist} follows from \eqref{interf_element_est_eq4} and \eqref{interf_element_est_eq5}.
Furthermore, we note that \eqref{interf_element_est_eq4} leads to
$$\sin(\theta(\mathbf{ n}(X),\bar{\mathbf{ n}}(K_T))) \leqslant \left( \frac{\sqrt{3}}{1 - \sqrt{3}\epsilon} + C(\epsilon) \right)\kappa h\leqslant \left( \frac{\sqrt{3}}{1 - \sqrt{3}\epsilon} + C(\epsilon) \right)\epsilon <1
$$
for $\epsilon < 0.0836$. Hence $\mathbf{ n}(X)\cdot\bar{\mathbf{ n}}(K_T) = \cos(\theta(\mathbf{ n}(X),\bar{\mathbf{ n}}(K_T))) = \sqrt{1-\sin^2(\theta(\mathbf{ n}(X),\bar{\mathbf{ n}}(K_T)))}$ with \eqref{interf_element_est_eq4} leads to \eqref{interf_element_est_prod}. Besides, $\| \mathbf{ n}(X) - \bar{\mathbf{ n}}(K_T) \|^2 = 2 - 2  \mathbf{ n}(X)\cdot\bar{\mathbf{ n}}(K_T)$ with \eqref{interf_element_est_prod} yields \eqref{interf_element_est_diff}.
\end{proof}


We note that the estimates similar to \eqref{interf_element_est_eq0} have been derived in \cite{2016GuoLin} for 2-D interface elements in terms of curve curvature and mesh size.  Interface elements in 3D have more complicated geometries and the coplanarity issue
is unavoidable. The maximum angle property of the triangle that determines the plane $\tau(T)$ to approximate the interface surface
in each interface element is critical.

\commentout{
\begin{rem}
For linear IFE functions on tetrahedral elements, the coplanar issue also exists when there are four intersection points on one tetrahedral element. This problem was discussed in \cite{2005KafafyLinLinWang} where the authors neglected the intersection point that has the minimal normal distance from the plane made by the remaining three intersection points, and let the approximating plane be the plane spanned by these remaining points. They claimed the chosen plane has the $\mathcal{O}(h^2)$ approximation accuracy to the interface surface. We believe the techniques developed here can be also applied to that case for a rigorous proof.
\end{rem}
}


\section{Trilinear IFE Spaces}
\label{sec:tri_IFE_func}

In this section, we develop trilinear IFE spaces for solving interface problems described by \eqref{model}.
Without loss of generality, we assume $\beta^+\geqslant\beta^-$ in the following discussion.
Denote the space of trilinear polynomials by $\mathbb{Q}_1$. Following the basic idea of the IFE method, on each non-interface element, we simply let the local IFE space be
\begin{equation}
\label{FE_space_loc}
S_h(T) = \mathbb{Q}_1,~~\forall T \in \mathcal{T}^n_h.
\end{equation}

Our major effort is to develop the local IFE space on each interface element $T \in \mathcal{T}^i_h$ in which we have defined
the plane $\tau(T)$ to approximate the interface surface $\Gamma$. Let $L(X)=(X-D_1)\cdot\bar{\mathbf{ n}}$ be such that $L(X)=0$ is an equation of the plane $\tau(T)$ and $\bar{\mathbf{n}}=\bar{\mathbf{ n}}(K_T)$ is the normal of $\tau(T)$.
\commentout{
On each interface element $T$, the key step in our construction procedure is to formulate approximate jump conditions on the plane $\tau$ descried in Section \ref{sec:geometry} associated with the interface surface. For simplicity of presentation, from now on, we employ the notation $\bar{\mathbf{n}}=\bar{\mathbf{ n}}(\triangle_T)$. Define a linear function $L(X)=(X-D_1)\cdot\bar{\mathbf{ n}}$ such that $L(X)=0$ is an equation of the plane $\tau$. Let $F$ be the centroid of the triangle $\triangle_T$.
}
Then, we consider trilinear IFE functions in a piecewise trilinear polynomial format:
\begin{equation}
\label{ife_shape_fun_1}
\phi_T(X) =
\left\{
\begin{aligned}
\phi^{-}_T(X)\in \mathbb{Q}_1 \;\;\;\;\; & \text{if} \;\; X\in T^-, \\
\phi^{+}_T(X)\in \mathbb{Q}_1 \;\;\;\;\; & \text{if} \;\; X\in T^+,
\end{aligned}
\right.
\end{equation}
such that the two polynomial components $\phi^{-}_T$ and $\phi^{+}_T$ satisfy the following approximate jump conditions:
\begin{subequations}
\label{ife_shape_fun_2}
\begin{align}
\phi^{-}_T|_{\tau}=\phi^{+}_T|_{\tau}, ~~ d(\phi^{-}_T)=d(\phi^{+}_T), \label{ife_shape_fun_2_1} \\
\beta^-\nabla\phi^{-}_T(F)\cdot \bar{\mathbf{ n}} = \beta^+\nabla\phi^{+}_T(F)\cdot \bar{\mathbf{ n}},  \label{ife_shape_fun_2_2}
\end{align}
\end{subequations}
in which $F$ is the centroid of the triangle $K_T$ and $d(p)$ denotes the coefficient of the term $xy+yz+xz$ in a trilinear polynomial $p \in \mathbb{Q}_1$. These approximate jump conditions are similar to their counterparts in the 2-D case, i.e., the bilinear IFE functions \cite{2016GuoLin,2008HeLinLin}.

By the approximate jump conditions \eqref{ife_shape_fun_2}, we can consider an extension operator as follows:
\begin{equation}
\label{ext_opera}
\mathcal{C}_T:\mathbb{Q}_1\rightarrow\mathbb{Q}_1, ~~ \text{such that} ~  \phi^-_T = p \in \mathbb{Q}_1  \text{~and~} \phi^+_T= \mathcal{C}_T(p) \in \mathbb{Q}_1 ~ \text{together satisfy \eqref{ife_shape_fun_2}}.
\end{equation}




\begin{thm}
\label{thm_poly_rela}
The operator $\mathcal{C}_T$ is well defined and bijective on $\mathbb{Q}_1$.
\end{thm}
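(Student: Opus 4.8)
The plan is to treat $\mathcal{C}_T$ concretely via the coefficient vectors of trilinear polynomials and show that the defining conditions \eqref{ife_shape_fun_2} impose exactly enough constraints to determine $\phi_T^+$ uniquely from $\phi_T^- = p$, with the resulting linear map being invertible. A trilinear polynomial in $\mathbb{Q}_1$ has eight coefficients (for $1,x,y,z,xy,yz,xz,xyz$), so $\phi_T^+$ carries eight unknowns. First I would count the constraints in \eqref{ife_shape_fun_2}: the condition $\phi_T^-|_\tau = \phi_T^+|_\tau$ says the difference $q := \phi_T^+ - \phi_T^-$ vanishes on the plane $\tau(T) = \{L(X)=0\}$; since $q\in\mathbb{Q}_1$ and $L$ is a nonconstant affine function, I would argue that $q$ must be divisible by $L$, i.e. $q = L\cdot \ell$ for some affine $\ell$ (this needs the observation that a trilinear polynomial vanishing on a plane factors through the plane's defining linear form — provable by a change of coordinates making $L$ a coordinate, after which $q$ being linear in that coordinate and zero on its zero-set forces the factorization, using that $\bar{\mathbf n}$ has, generically, all nonzero components; the degenerate axis-aligned cases are even simpler). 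Writing $\ell = c_0 + c_1 x + c_2 y + c_3 z$ reduces $\phi_T^+$ to four free parameters $c_0,\dots,c_3$. Then I would impose the remaining three scalar conditions: $d(\phi_T^+) = d(\phi_T^-)$ fixes the combination of the $xy,yz,xz$-coefficients of $L\ell$, i.e. one linear equation in $c_1,c_2,c_3$; and the flux matching \eqref{ife_shape_fun_2_2} at the centroid $F$, namely $\beta^- \nabla p(F)\cdot\bar{\mathbf n} = \beta^+\nabla(p + L\ell)(F)\cdot\bar{\mathbf n}$, gives $(\beta^+ - \beta^-)\nabla p(F)\cdot\bar{\mathbf n} + \beta^+ \nabla(L\ell)(F)\cdot\bar{\mathbf n} = 0$, one more linear equation.

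That is only $1 + 1 = 2$ further constraints against four parameters, so I must be undercounting; the correct reading is that $\phi_T^-|_\tau = \phi_T^+|_\tau$ as an identity on the whole plane $\tau(T)$ means $q$ restricted to that $2$-dimensional plane is the zero function, which after the factorization $q = L\ell$ is automatic, and the genuinely constraining part is instead that $\phi_T^+$ is still required to be an honest element of $\mathbb{Q}_1$ — no further freedom is lost there. So the system is: given $p$, choose $\ell$ affine (four parameters $c_0,c_1,c_2,c_3$) subject to (i) $d(L\ell) = 0$ and (ii) the flux equation above; that leaves two parameters. To pin down a well-defined map I would use the two remaining degrees of freedom in $\ell$ that are invisible to $\phi_T^+$ on $\tau$ but — crucially — are visible off $\tau$: here I would instead re-derive the count by parametrizing $\phi^+_T$ directly and noting $\phi^+_T$ has eight coefficients, $\phi^-_T|_\tau = \phi^+_T|_\tau$ is two genuine conditions once one quotients by $L$...

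Rather than belabor the bookkeeping, the clean route I would actually carry out: fix a coordinate frame in which $\tau(T)$ becomes $\{\hat z = 0\}$ and work with the transformed trilinear polynomials (trilinearity is preserved under axis permutations and the relevant affine changes used here, or one argues directly in the original frame using $L$). In these coordinates $\phi_T^-|_\tau = \phi_T^+|_\tau$ forces the two polynomials to agree in their $\hat z$-independent part, i.e. $\phi_T^+ = \phi_T^- + \hat z\,\mu(\hat x,\hat y)$ with $\mu$ bilinear in $\hat x,\hat y$ — four parameters, matching the ``extension'' picture. The constraint $d(\phi_T^+)=d(\phi_T^-)$ and the single scalar flux equation \eqref{ife_shape_fun_2_2} then become a $2\times 4$... again two equations. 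I will therefore present the argument as: the map $p\mapsto \phi_T^+$ is affine-linear in the coefficients of $p$ by construction; \emph{well-definedness} amounts to showing the linear system \eqref{ife_shape_fun_2} for the coefficients of $\phi_T^+$ (given $p$) has a unique solution, equivalently that the associated homogeneous system ($p=0$) has only the trivial solution; and \emph{bijectivity} on $\mathbb{Q}_1$ then follows because the inverse is the extension from the ``$+$'' side to the ``$-$'' side, obtained by the same construction with $\beta^\pm$ swapped, so $\mathcal{C}_T$ composed with that map is the identity. Concretely: if $p=0$ and $\phi_T^+\in\mathbb{Q}_1$ satisfies $\phi_T^+|_\tau = 0$, $d(\phi_T^+)=0$, $\beta^+\nabla\phi_T^+(F)\cdot\bar{\mathbf n}=0$, I must conclude $\phi_T^+=0$; $\phi_T^+|_\tau=0$ gives $\phi_T^+ = L\ell$ with $\ell$ affine, the condition $d(L\ell)=0$ kills one coefficient of $\ell$, and the flux condition $\beta^+ \nabla(L\ell)(F)\cdot\bar{\mathbf n} = \beta^+ |\bar{\mathbf n}|^2 \ell(F) = 0$ (using $\nabla L = \bar{\mathbf n}$ and $\nabla \ell \cdot \bar{\mathbf n}$ contributing the $L(F)$-weighted term, which one computes from $\nabla(L\ell) = \ell\,\bar{\mathbf n} + L\nabla\ell$ evaluated at $F$) forces $\ell(F)=0$; a careful check that these conditions on the at-most-four coefficients of the relevant part of $L\ell$ are linearly independent — using $|\bar{\mathbf n}|=1$, $\beta^+>0$, and the explicit non-vanishing of the determinant of the small system — yields $\ell\equiv 0$, hence $\phi_T^+=0$.

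The main obstacle I anticipate is precisely this last linear-independence/nondegeneracy check: one must verify that the two (or however many, once recounted correctly) scalar conditions extracted from \eqref{ife_shape_fun_2} — the matching of the $d(\cdot)$ coefficient and the centroid flux — together with the factorization $q = L\ell$ cut the four-parameter family of candidate extensions down to a single point, \emph{uniformly in the interface location and in} $\beta^\pm$. The worry is degeneracy when $\bar{\mathbf n}$ is nearly axis-aligned (so some components of $\bar{\mathbf n}$ vanish and the factorization $q=L\ell$ or the coefficient identities degenerate) or when $\beta^+/\beta^-$ is extreme; I expect the paper handles the axis-aligned cases separately or absorbs them as limits, and that $\beta^+ > \beta^- > 0$ keeps the relevant determinant nonzero because the $\beta$-dependence enters only through the strictly positive factor $\beta^+$ in the flux equation and the ratio never makes a pivot vanish. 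I would double-check the coefficient count against the 2-D bilinear precedent in \cite{2016GuoLin,2008HeLinLin} to make sure the three scalar equations in \eqref{ife_shape_fun_2} are being matched against the correct number of genuine unknowns in $\phi_T^+$ after the plane-restriction is accounted for, since that is where an off-by-one in the dimension count would hide.
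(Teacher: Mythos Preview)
Your proposal has a genuine gap at precisely the point you flag as uncertain: the dimension count. The error is the claim that ``no further freedom is lost'' from requiring $L\ell \in \mathbb{Q}_1$. In fact this is where almost all the freedom disappears. Write $\bar{\mathbf{n}}=(\bar n_1,\bar n_2,\bar n_3)$; since $\mathbb{Q}_1$ consists of polynomials of degree at most one in \emph{each variable separately}, the product of $L$ with an affine $\ell=l_0+l_1x+l_2y+l_3z$ contains the pure-square contributions $\bar n_1 l_1\, x^2$, $\bar n_2 l_2\, y^2$, $\bar n_3 l_3\, z^2$, which are forbidden in $\mathbb{Q}_1$. Hence $l_i=0$ whenever $\bar n_i\neq 0$, and in the generic situation where all components of $\bar{\mathbf{n}}$ are nonzero this forces $\ell$ to be the constant $l_0$, so that $q=p-\phi_T^+=c_0 L$ for a \emph{single} scalar $c_0$. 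The flux condition \eqref{ife_shape_fun_2_2} then determines $c_0$ uniquely from $p$, and the $d(\cdot)$ condition in \eqref{ife_shape_fun_2_1} is there to handle the axis-aligned degenerations where one $\bar n_i=0$ and the kernel would otherwise pick up an extra dimension. Your recurring count of four parameters for $\ell$ against only two equations is therefore an artifact of overlooking the trilinearity constraint on $q$ itself. A related slip: the coordinate change you propose, rotating so that $\tau=\{\hat z=0\}$, does \emph{not} preserve $\mathbb{Q}_1$ --- trilinearity with respect to the axes $x,y,z$ is not invariant under a general rotation --- so the decomposition $\phi_T^+=\phi_T^-+\hat z\,\mu(\hat x,\hat y)$ with $\mu$ bilinear is not the correct parametrization of the kernel in the original coordinates.

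The paper's argument is correspondingly short: it records the structural identity $p-\mathcal{C}_T(p)=c_0 L$ with $c_0$ fixed by \eqref{ife_shape_fun_2_2} (this is \eqref{thm_poly_rela_eq1}), from which well-definedness is immediate (one scalar unknown, one scalar equation) and injectivity follows in one line since $\mathcal{C}_T(p)=0$ forces $c_0=0$ and hence $p=0$. Bijectivity is then automatic because $\mathcal{C}_T$ is a linear self-map of the finite-dimensional space $\mathbb{Q}_1$.
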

\begin{proof}
Since $\mathcal{C}_T$ is a linear operator and clearly, $\mathcal{C}_T(0)=0$, we know $\mathcal{C}_T$ is well defined. Besides, since $\mathcal{C}_T$ is mapping from the finite dimensional space $\mathbb{Q}_1$ to itself, we only need to show $\mathcal{C}_T$ is injective. By \eqref{ife_shape_fun_2}, $p\in \mathbb{Q}_1$ and its extension $\mathcal{C}_T(p)$ satisfy
\begin{equation}
\label{thm_poly_rela_eq1}
 p(X)  - \mathcal{C}_T(p)(X) = c_0L(X), ~~~ \text{with}~
 c_0 = \left( 1 - \frac{\beta^-}{\beta^+}  \right) \big(\nabla \mathcal{C}_T(p)(F)\big)\cdot\bar{\mathbf{ n}}.
\end{equation}
Hence, according to \eqref{thm_poly_rela_eq1}, $\mathcal{C}_T(p)=0$ must imply $p=0$, which finishes the proof.
\end{proof}
Furthermore, according to Theorem \ref{thm_poly_rela}, we have
\begin{equation}
\label{ext_c_express}
\mathcal{C}_T(p) = p + \left(\frac{\beta^-}{\beta^+} - 1 \right) (\nabla p(F)\cdot \bar{\mathbf{ n}}) L(X) ~ \text{and} ~ \mathcal{C}^{-1}_T(p) = p + \left(\frac{\beta^+}{\beta^-} - 1 \right) (\nabla p(F)\cdot \bar{\mathbf{ n}})L(X), ~
\forall p \in \mathbb{Q}_1.
\end{equation}
Using this extension operator $\mathcal{C}_T$, we define the local IFE space on every interface element as follows:
\begin{equation}
\label{loc_IFE_space}
S_h(T) = \{ \phi_T \text{~is piecewise trilinear such that} ~ \phi_T|_{T^-}=p \in \mathbb{Q}_1 ~ \text{and} ~  \phi_T|_{T^+}= \mathcal{C}_T(p)  \},~~\forall T \in \mathcal{T}_h^i.
\end{equation}
The bijection of $\mathcal{C}_T$ directly implies the dimension of $S_h(T)$ is 8 which is the same as $\mathbb{Q}_1$, i.e., the standard local trilinear finite element space. Unlike some 2-D linear and bilinear IFE spaces in the literature \cite{2008HeLinLin,2004LiLinLinRogers,2015LinLinZhang,2013ZhangTHESIS} whose functions are piecewise polynomials defined according to a linear approximation of the interface $\Gamma$ on each interface element, each trilinear IFE function of $S_h(T)$ by \eqref{loc_IFE_space} is a piecewise polynomial defined according to the interface surface $\Gamma$ itself, not its linear approximation
$\tau(T)$. As pointed out in \cite{2016GuoLin}, IFE functions defined according to the interface itself have some advantages in both computation and analysis, even more so in 3-D. For example, if IFE functions are defined according to the plane $\tau(T)$ on each interface element $T$, then the involved computations have to vary from one interface element to another because
the coplanarity issue requires $\tau(T)$ to be constructed in different ways on different interface elements. Also,
the traces of such an IFE function on an interface face induced from two adjacent interface elements will have different discontinuities which require extra treatments to compute quantities on interface faces such as penalties needed in an IFE scheme based on a DG formulation.

\begin{rem}\label{rem_ext_1}
By definition, each IFE function in $S_h(T)$ is formed by extending
a trilinear polynomial used on $T^-$ to another trilinear polynomial on $T^+$. The choice of the extension from $T^-$ to
$T^+$ follows from the assumption that $\beta^+\geqslant\beta^-$ which is useful for related analysis. Because of
Theorem \ref{thm_poly_rela} and \eqref{ext_c_express}, extension from $T^+$ to $T^-$ can be introduced similarly when
$\beta^-\geqslant\beta^+$ and all results related to the corresponding IFE spaces hold. We also note that the construction of IFE spaces based on some extension operator has already been reported in the literature for the 2D case which can be traced back to \cite{2014AdjeridBenromdhaneLin} and explicitly studied in \cite{2019GuoLin,2016GuzmanSanchezSarkis,2015GuzmanSanchezSarkisP1}.
\end{rem}
\commentout{
\begin{rem}
Unlike some 2-D linear and bilinear IFE spaces in the literature \cite{2008HeLinLin,2004LiLinLinRogers,2015LinLinZhang,2013ZhangTHESIS} whose functions are piecewise polynomials defined according to a linear approximation of the interface $\Gamma$ on each interface element, each trilinear IFE function of $S_h(T)$ by \eqref{loc_IFE_space} is a piecewise polynomial defined according to the interface surface $\Gamma$ itself, not its linear approximation
$\tau(T)$. As pointed out in \cite{2016GuoLin}, IFE functions defined according to the interface itself have some advantages in both computation and analysis, even more so in 3-D. For example, if IFE functions are defined according the plane $\tau(T)$ on each interface element $T$, then the involved computations procedures have to vary from one interface element to another because
the coplanarity issue requires $\tau(T)$ to be constructed in different ways on different interface elements. Also,
the traces of an IFE function on an interface face induced from two adjacent interface elements will have different discontinuities which require extra treatments to compute quantities on interface faces and edges such as penalties needed in the PPIFE scheme to be
presented later.
\end{rem}
}
\commentout{
The relationships in \eqref{ext_c_express} suggest that we can also consider an extension operator $\mathcal{C}_T~:~\mathbb{Q}_1\rightarrow\mathbb{Q}_1$ as $\mathcal{C}_T(\phi^-_T)=\phi^+_T$, i.e., from $-$ to $+$, and all the results above can be proved similarly. Here, we choose the definition \eqref{thm_poly_rela_eq1} because $\beta^+\geqslant\beta^-$ which can facilitate a simple presentation of analysis in the following.
}

We now discuss how to chose IFE shape functions from the local IFE space \eqref{loc_IFE_space} according to some desirable features. In particular, we consider the IFE shape functions with the Lagrange type degrees freedom, i.e., an IFE function identified by its nodal values. For this purpose, on each interface element $T$ with vertices $A_i, 1 \leq i \leq 8$, we introduce the index set $\mathcal{I}=\{1,2,\cdots,8\}$ and its sub-sets $\mathcal{I}^s=\{i\in\mathcal{I}:A_i\in T^s \}$, $s=\pm$. Then, we enforce the nodal value condition for IFE functions:
\begin{equation}
\label{ife_shape_fun_3}
\phi_{T}(A_i)=v_i, \;\;\;\; v_i\in\mathbb{R}, ~~ i \in \mathcal{I}.
\end{equation}
Let $\psi_{i,T}$, $i\in\mathcal{I}$ be the standard Lagrange trilinear shape function
associated with the vertex $A_i, i\in\mathcal{I}$. Then, following
the idea in \cite{2016GuoLin} and the assumption that $\beta^+ \geq \beta^-$, we use the second equation in \eqref{ext_c_express} to express $\phi_T$ on $T^-$ as follows:
\begin{equation}
\label{ife_shape_fun_4}
\phi_T(X) =
\begin{cases}
 \phi^{-}_T(X) = \mathcal{C}_T^{-1}(\phi^+_T) = \phi^{+}_T(X)+c_0L(X), & \text{if} \;\; X\in T^-, \\
 \phi^{+}_T(X)  = \sum_{i\in\mathcal{I}^-}c_i\psi_{i,T}(X)+\sum_{i\in\mathcal{I}^+}v_i\psi_{i,T}(X), & \text{if} \;\; X\in T^+,
\end{cases}
\end{equation}
in which, according to \eqref{thm_poly_rela_eq1}, the constant $c_0$ has the following expression:
\begin{equation}
\label{ife_shape_fun_5}
c_0=
\mu \left( \sum_{i\in\mathcal{I}^-}c_i\nabla\psi_{i,T}(F)\cdot \bar{\mathbf{ n}} + \sum_{i\in\mathcal{I}^+}v_i\nabla\psi_{i,T}(F)\cdot \bar{\mathbf{ n}} \right),
\end{equation}
where $\mu=\frac{\beta^+}{\beta^-}-1\geqslant0$. Next, by enforcing the nodal value condition \eqref{ife_shape_fun_3} at the nodes $A_i\in T^-$, we obtain a linear system for the the unknown coefficients $\mathbf{ c}=(c_i)_{i\in\mathcal{I}^-}$:
\begin{equation}
\label{ife_shape_fun_6}
(I + \mu \,\bfdelta \bfgamma^T )\bfc = \bfb,
\end{equation}
where
\begin{equation}
\label{ife_shape_fun_7}
\bfgamma = \left( \nabla\psi_{i,T}(F)\cdot\bar{ \mathbf{ n}} \right)_{i\in\mathcal{I}^-}, ~~ \bfdelta=\left( L(A_i) \right)_{i\in\mathcal{I}^-} ~~ \text{and} ~~
\bfb =
\left(
v_i- \mu L(A_i) \sum_{j\in\mathcal{I}^+} \nabla\psi_{j,T}(F)\cdot \bar{\mathbf{ n}} v_j
 \right)_{i\in\mathcal{I}^-}.
\end{equation}
are all column vectors in $\mathbb{R}^{|\mathcal{I}^-|}$ whose values are known. We note that the matrix in \eqref{ife_shape_fun_6} is in a Sherman-Morrison type, and its structure described by \eqref{ife_shape_fun_6}-\eqref{ife_shape_fun_7} is exactly the same as the one for the 2-D IFE shape functions reported in \cite{2016GuoLin}. 

Next, we analyze the solvability of the linear system in \eqref{ife_shape_fun_6}, referred as the unisolvence of Lagrange IFE shape functions. Here, we emphasize that the unisolvence should be independent of the interface element configuration and the coefficients $\beta^{\pm}$. We need the following lemma.

\begin{lemma}
\label{lemma_gammadelta_01}
For all the cubic interface elements as shown in Figure \ref{fig:subfig_cub}, we have
$
\bfgamma^T\bfdelta \in [0,1]
$.
Furthermore, there holds
\begin{equation}
\label{lemma_gammadelta_01_eq1}
h^{-1}\| \bfdelta \|_{\infty} \leqslant 7.43 \bfgamma^T\bfdelta,
\end{equation}
where $\|\cdot\|_{\infty}$ denotes the infinity norm for a vector.
\end{lemma}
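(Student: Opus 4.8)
My plan is to verify the claim by a direct, case-by-case computation over the five interface-element configurations in Figure \ref{fig:subfig_cub}, exploiting the explicit structure of $\bfgamma$ and $\bfdelta$. First I would set up a convenient coordinate system on the reference cube $\hat T=[0,h]^3$, writing the plane $\tau(T)$ in the normalized form $L(X)=\bar{\mathbf n}\cdot(X-D_1)$ with $\|\bar{\mathbf n}\|=1$, and recording that for a vertex $A_i\in T^-$ the quantity $\delta_i=L(A_i)$ is (up to sign) the signed distance from $A_i$ to $\tau(T)$, hence $|\delta_i|\le\sqrt3 h$ and, more importantly, $\delta_i$ has a fixed sign (all vertices in $T^-$ lie on one side of $\tau(T)$). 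The components $\gamma_i=\nabla\psi_{i,T}(F)\cdot\bar{\mathbf n}$ are derivatives of the standard trilinear nodal basis evaluated at the centroid $F$ of $K_T$; the key algebraic fact I would isolate is the partition-of-unity identity $\sum_{i\in\mathcal I}\nabla\psi_{i,T}\equiv 0$ together with the fact that $\sum_{i\in\mathcal I}L(A_i)\nabla\psi_{i,T}(X)\equiv\nabla L\equiv\bar{\mathbf n}$ (since $L$ is affine and the trilinear interpolant reproduces affine functions), so that $\sum_{i\in\mathcal I}\delta_i\gamma_i=\bar{\mathbf n}\cdot\bar{\mathbf n}=1$. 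This is what forces $\bfgamma^T\bfdelta=\sum_{i\in\mathcal I^-}\delta_i\gamma_i$ into $[0,1]$: the full sum is $1$, and one must show the "missing" terms $\sum_{i\in\mathcal I^+}\delta_i\gamma_i$ also lie in $[0,1]$, i.e. that the partial sum over $\mathcal I^-$ is sandwiched.

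Concretely, to get $\bfgamma^T\bfdelta\ge 0$ I would argue that within each of the five configurations, for every vertex $A_i\in T^-$ the product $\delta_i\gamma_i$ is nonnegative: $\delta_i$ is a signed distance with a sign determined by which subregion $T^-$ occupies, and $\gamma_i=\nabla\psi_{i,T}(F)\cdot\bar{\mathbf n}$ turns out to carry a compatible sign because $F$ lies on (an approximation of) $\tau(T)$ and $\nabla\psi_{i,T}$ points "toward'' $A_i$; this is where the geometry of the configurations in Figure \ref{fig:subfig_cub}, and in particular the controlled location of $F$ relative to the faces of the cube, enters. For $\bfgamma^T\bfdelta\le 1$ I would similarly show the complementary sum over $\mathcal I^+$ is $\ge 0$ by the same sign analysis applied to the $+$ side, and then use $\sum_{i\in\mathcal I}\delta_i\gamma_i=1$. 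The inequality \eqref{lemma_gammadelta_01_eq1} then reduces to bounding $h^{-1}\max_{i\in\mathcal I^-}|\delta_i|$ from above by a fixed multiple of $\sum_{i\in\mathcal I^-}\delta_i\gamma_i$; since both sides scale like the maximal vertex-to-plane distance, this is a statement that the worst-case ratio over all admissible plane positions and all five configurations is at most $7.43$. I would obtain it by noting $|\delta_i|\le\sqrt3 h$ while the sum $\sum_{i\in\mathcal I^-}\delta_i\gamma_i$ is bounded below by a single term $\delta_{i_0}\gamma_{i_0}$ for a well-chosen vertex $i_0\in\mathcal I^-$ on which $\gamma_{i_0}$ is bounded away from $0$ and $\delta_{i_0}$ is comparable to $\max_i|\delta_i|$; tracking the explicit trilinear-basis gradient values at $F$ (which depend only on the barycentric-type coordinates of $F$ in the cube, themselves constrained by the configuration) yields the numerical constant.

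The main obstacle I anticipate is the sign bookkeeping in the step $\delta_i\gamma_i\ge 0$: one must handle all five configurations of Figure \ref{fig:subfig_cub}, and within each, the partition of $\{A_i\}$ into $\mathcal I^-$ and $\mathcal I^+$ can vary, so the argument that $\nabla\psi_{i,T}(F)\cdot\bar{\mathbf n}$ and $L(A_i)$ share a sign needs the precise position of the centroid $F$ of $K_T$, and hence the case analysis of which edges carry the intersection points $D_{j_i}$ (as specified in the case list preceding Lemma \ref{lem_max_angle}). A secondary technical point is that the clean identity $\sum_i\delta_i\gamma_i=1$ uses $\|\bar{\mathbf n}\|=1$ and that $L$ vanishing on $\tau(T)$ together with trilinear reproduction of affine functions gives $\sum_i L(A_i)\nabla\psi_{i,T}=\bar{\mathbf n}$; I would state and verify this small linear-algebra fact carefully at the outset, since it is the backbone of both the $[0,1]$ membership and the constant in \eqref{lemma_gammadelta_01_eq1}. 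Once the sign pattern is pinned down configuration by configuration, the remaining estimates are elementary and can be pushed through by evaluating the finitely many trilinear-gradient expressions at $F$.
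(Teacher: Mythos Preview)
Your identity $\sum_{i\in\mathcal I}L(A_i)\,\nabla\psi_{i,T}(F)\cdot\bar{\mathbf n}=1$ is correct and is a clean way to see why the two partial sums over $\mathcal I^-$ and $\mathcal I^+$ should be complementary. However, the step on which your whole argument rests, namely that ``$\delta_i$ has a fixed sign (all vertices in $T^-$ lie on one side of $\tau(T)$)'', is false in general. The index set $\mathcal I^-$ is defined through the curved interface $\Gamma$, not through the plane $\tau(T)$; since $\tau(T)$ is forced to pass through only three of the (up to six) edge--interface intersection points, the remaining intersection points need not lie on $\tau(T)$, and a vertex $A_i\in T^-$ may well sit on the ``wrong'' side of $\tau(T)$. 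Case~3 already exhibits this: with the four intersection points on the four parallel edges, the plane through three of them can miss the fourth edge inside the cube altogether, so that both endpoints of that edge lie on the same side of $\tau(T)$, forcing one of the $L(A_i)$ with $i\in\mathcal I^-$ to change sign. The paper's own bounds for Case~3 confirm this: $h^{-1}L(A_3)/\bfgamma^T\bfdelta$ ranges over an interval containing both negative and positive values, while $L(A_1),L(A_5),L(A_7)$ keep a fixed sign. Hence your termwise inequality $\delta_i\gamma_i\ge 0$ cannot be deduced from a sign argument on $\delta_i$ alone, and the $[0,1]$ inclusion does not follow from your outline.

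Even if one tries to salvage the plan by arguing directly that the product $\delta_i\gamma_i\ge 0$ (letting the sign of $\gamma_i$ track that of $\delta_i$), this is no longer a soft geometric statement: the sign of $\nabla\psi_{i,T}(F)\cdot\bar{\mathbf n}$ depends on the precise location of $F$ and of $\bar{\mathbf n}$, and verifying the correlation case by case is at least as much work as the paper's route. The paper in fact abandons any structural argument and proves the lemma by direct computation: it fixes coordinates on $[0,h]^3$, parametrizes the intersection points by $d_i\in[0,1]$ in each of the five configurations, writes $\bfgamma^T\bfdelta$ and each ratio $h^{-1}L(A_i)/\bfgamma^T\bfdelta$ as explicit rational functions of the $d_i$, and then reads off the numerical ranges (the worst ratio, occurring in Case~5 at vertex $A_1$, is what produces the constant $7.43$). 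Your identity could still serve as a useful sanity check in such a computation, but it does not replace it.
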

\begin{proof}
The proof follows from direct calculations, see Appendix \ref{App_A2} for more details.
\end{proof}

\begin{thm}[Unisolvence]
\label{unisolv_IFE_shape}
On each interface element $T$, regardless of the interface location and coefficients $\beta^{\pm}$, given any vector $\mathbf{ v}=(v_i)_{i\in\mathcal{I}}^T\in\mathbb{R}^8$, there exists a unique IFE function $\phi_T\in S_h(T)$ that satisfies the nodal value condition \eqref{ife_shape_fun_3}.
\end{thm}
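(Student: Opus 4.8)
The plan is to reduce the unisolvence claim to the invertibility of the Sherman--Morrison matrix $I + \mu\,\bfdelta\bfgamma^T$ in \eqref{ife_shape_fun_6}. Recall from the construction leading to \eqref{ife_shape_fun_4}--\eqref{ife_shape_fun_7} that, once the vector $\bfc = (c_i)_{i\in\mathcal{I}^-}$ of auxiliary coefficients on $T^-$ is determined, the IFE function $\phi_T$ is completely and uniquely determined: its values at the nodes $A_i$, $i\in\mathcal{I}^+$, are prescribed to be $v_i$, the polynomial $\phi_T^+$ is then fixed, $c_0$ is given by \eqref{ife_shape_fun_5}, and $\phi_T^- = \phi_T^+ + c_0 L$ follows. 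Conversely, enforcing the remaining nodal conditions $\phi_T(A_i) = v_i$ for $i\in\mathcal{I}^-$ is \emph{equivalent} to the linear system \eqref{ife_shape_fun_6}. Hence the theorem is equivalent to the assertion that \eqref{ife_shape_fun_6} has a unique solution $\bfc$ for every right-hand side $\bfb$, i.e.\ that $I + \mu\,\bfdelta\bfgamma^T$ is nonsingular, and crucially this must hold \emph{uniformly in $\beta^\pm$ and the interface configuration}.

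For a rank-one update, the Sherman--Morrison formula gives $\det(I + \mu\,\bfdelta\bfgamma^T) = 1 + \mu\,\bfgamma^T\bfdelta$. So I would argue as follows. First, recall $\mu = \beta^+/\beta^- - 1 \geqslant 0$ by the standing assumption $\beta^+ \geqslant \beta^-$. Second, invoke Lemma \ref{lemma_gammadelta_01}, which asserts $\bfgamma^T\bfdelta \in [0,1]$ for every one of the five cubic interface configurations of Figure \ref{fig:subfig_cub}. Therefore $1 + \mu\,\bfgamma^T\bfdelta \geqslant 1 > 0$, so the matrix is invertible and \eqref{ife_shape_fun_6} is uniquely solvable; reading off $\bfc$ and then backtracking through $c_0$, $\phi_T^+$, $\phi_T^-$ yields the unique $\phi_T \in S_h(T)$ satisfying \eqref{ife_shape_fun_3}. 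I would also note the degenerate-case bookkeeping: when $\mathcal{I}^- = \emptyset$ (all vertices on the $+$ side) the system is vacuous and $\phi_T = \sum_{i\in\mathcal{I}}v_i\psi_{i,T}$ with the $c_0 L$ correction on $T^-$ works directly; when $\beta^+ = \beta^-$ we have $\mu = 0$ and the matrix is simply $I$, recovering the standard trilinear element, consistent with the fact that there is then no genuine interface jump.

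The only real content is Lemma \ref{lemma_gammadelta_01} (and the fact that the vertex-partition index sets $\mathcal{I}^\pm$ and the vectors $\bfgamma, \bfdelta$ are exactly the ones appearing there), which is proved in Appendix \ref{App_A2} by explicit computation over the five reference configurations; from the perspective of this theorem it is a black box. So the main obstacle is essentially organizational rather than analytical: one must check carefully that the passage from "unique $\bfc$" back to "unique $\phi_T \in S_h(T)$" is a genuine bijection — that no information is lost in the steps \eqref{ife_shape_fun_4}--\eqref{ife_shape_fun_5}, that $\phi_T^-$ so constructed indeed lies in the image $\mathcal{C}_T^{-1}(\phi_T^+)$ (guaranteed by Theorem \ref{thm_poly_rela} and the explicit formula \eqref{ext_c_express}), and hence that the resulting $\phi_T$ is an element of $S_h(T)$ as defined in \eqref{loc_IFE_space}. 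Once that chain of equivalences is laid out, existence and uniqueness for the vector $\mathbf{v}\in\mathbb{R}^8$ follow immediately from $\det(I+\mu\,\bfdelta\bfgamma^T) = 1 + \mu\,\bfgamma^T\bfdelta \geqslant 1$.
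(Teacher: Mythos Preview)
Your proposal is correct and follows essentially the same route as the paper: reduce to the invertibility of the rank-one update $I+\mu\,\bfdelta\bfgamma^T$, invoke Lemma~\ref{lemma_gammadelta_01} to get $\bfgamma^T\bfdelta\in[0,1]$, combine with $\mu\geqslant 0$ to obtain $1+\mu\,\bfgamma^T\bfdelta\geqslant 1>0$, and apply Sherman--Morrison. The paper's proof is terser (it omits your careful bookkeeping on the bijection between $\bfc$ and $\phi_T$ and the degenerate cases), but the mathematical content is identical.
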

\begin{proof}
Using Lemma \ref{lemma_gammadelta_01} and recalling $\mu=\frac{\beta^+}{\beta^-}-1$ with $\beta^+\geqslant\beta^-$, we obtain
\begin{equation}
\label{unisolv_IFE_shape_eq1}
1 + \mu \bfgamma^T\bfdelta \geqslant \min\bigg\{ 1, \frac{\beta^+}{\beta^-} \bigg\}\geqslant 1 >0.
\end{equation}
Therefore, by the well known Sherman-Morrison formula, the matrix in \eqref{ife_shape_fun_6} is nonsingular. Hence,
all the coefficients $\mathbf{ c}=(c_i)_{i\in\mathcal{I}^-}$ and $c_0$ of $\phi_T\in S_h(T)$ are uniquely determined
by $\mathbf{ v}=(v_i)_{i\in\mathcal{I}}^T\in\mathbb{R}^8$.
\end{proof}
In addition, by the Sherman-Morrison formula, we have the following explicit formulas for the unknown coefficients $\mathbf{ c}$ and $c_0$:
\begin{equation}
\label{c_c0_formula}
   \mathbf{ c} = \mathbf{ v}^- - \frac{\mu \Xi \bfdelta }{ 1 + \mu \bfgamma^T \bfdelta } ~~~ \text{and} ~~~  c_0 = \frac{\mu \Xi }{ 1 + \mu \bfgamma^T\bfdelta },
\end{equation}
where $\mathbf{ v}^- = (v_i)^T_{i\in\mathcal{I}^-}\in\mathbb{R}^{|\mathcal{I}^-|}$ and $\Xi = \sum_{i\in\mathcal{I}}v_i\nabla\psi_{i,T}(F)\cdot\bar{\mathbf{ n}}$. By Theorem \ref{unisolv_IFE_shape}, we can let $\mathbf{ v}$ be the unit vector $\mathbf{ e}_i$, $i\in\mathcal{I}$, in $\mathbb{R}^8$ and define the corresponding Lagrange IFE shape function, $\phi_{i,T}$ such that
\begin{equation}
\label{IFE_shape_fun_delta}
\phi_{i,T}(A_j) = \delta_{i,j}, ~~~ i,j\in\mathcal{I}.
\end{equation}
The Lagrange IFE shape functions provide an alternative description of the local IFE space \eqref{loc_IFE_space}:
\begin{equation}
\label{loc_IFE_space_span}
S_h(T) = \text{Spna} \{ \phi_{i,T}~:~ i\in\mathcal{I} \}.
\end{equation}
We note that the explicit formulas \eqref{c_c0_formula} facilitate the implementation of the Lagrange IFE shape functions because
they depend straightforwardly on the centroid $F$ and normal of the triangle $K_T$ formed inside each interface element $T$ according to the procedure developed in Section \ref{sec:geometry}.

\commentout{
\begin{rem}
\label{ife_shape_feature}
We emphasize that in the proposed construction of the IFE spaces \eqref{loc_IFE_space} and the related shape functions \eqref{IFE_shape_fun_delta}, the approximate plane $\tau$ or its intersection $\tau\cap T$ does not have to be explicitly generated. Besides, the structure of the explicit formulas \eqref{c_c0_formula} do not rely on the interface element configurations shown in Figure \ref{fig:subfig_interf}. Actually, when applying \eqref{c_c0_formula} for computing IFE shape functions, one only needs to input the data (related to the interface) of the point $F$, the normal vector $\bar{\mathbf{ n}}$, and location of element vertices relative to interface surface. These features make the proposed method easy for implementation.
\end{rem}
}

\commentout{
\begin{rem}
\label{surface_partition}
In the proposed IFE shape functions, the plane $\tau$ is only used to formulate the approximate jump conditions \eqref{ife_shape_fun_2} and our IFE functions \eqref{ife_shape_fun_1} are still piecewise polynomials partitioned by the original interface surface $\Gamma$ instead of the plane $\tau$. We note that most works in the literature for low-degree IFE functions in the 2-D case only consider the IFE functions partitioned by the related linear approximation such as \cite{2008HeLinLin,2004LiLinLinRogers,2015LinLinZhang,2013ZhangTHESIS} because of the resulted local $H^1$ regularity on interface elements. However, this partitioning approach causes some extra difficulties in the 3-D case. Firstly, the location of element vertices relative to the interface surface and the plane $\tau$ may be different, and this yields obscurity in both the analysis and computation. Secondly, the trace of IFE functions partitioned by the plane $\tau$ from the two neighborhood elements on each interface face may have different discontinuities which requires extra work to compute penalties needed in the proposed PPIFE scheme (discussed in the following) on interface faces.
\end{rem}
}

\begin{rem}
\label{rem_IFEshape_unisol}
By a comparison of \emph{(2.4)} in \cite{2010VallaghePapadopoulo} and \eqref{thm_poly_rela_eq1} above, we note that the proposed construction procedure for the Lagrange type IFE shape functions is similar to the one reported in \cite{2010VallaghePapadopoulo}, but essential differences exist. The method in \cite{2010VallaghePapadopoulo} relies on a level set representation of the interface surface and uses the level set function itself in the construction instead of its linear approximation. A theorem in \cite{2010VallaghePapadopoulo} guarantees the unisolvence of the Lagrange IFE shape functions on the admissible coefficient set $\{(\beta^-,\beta^+)~:~ \beta^->0,~\beta^+>0\}$ sans a subset of measure zero. In contrast, the proposed construction procedure does not have this limitation because, by Theorem \ref{unisolv_IFE_shape} above, the unisolvence of the IFE shape functions always holds regardless of interface location and admissible coefficients $\beta^{\pm}$. Furthermore, properties such as \eqref{unisolv_IFE_shape_eq1} and \eqref{c_c0_formula} in the proposed construction procedure are useful in both the analysis and implementation.
\end{rem}

The local IFE space on interface elements can be employed to form global IFE spaces over the whole solution domain
$\Omega$ with desirable features. For example, the following global IFE space can be used in an IFE method based on the DG formulation:
\begin{align}
S_h(\Omega)=&\left\{ v\in L^2(\Omega):v|_{T}\in S_h(T) \text{~defined by \eqref{FE_space_loc} or \eqref{loc_IFE_space}~~}
\forall T \in \mathcal{T}_h \right\}. \label{glob_IFE_space_DG}
\end{align}
Another example of global IFE spaces is
\begin{equation}
\begin{split}
\label{glob_IFE_space}
S_h(\Omega)=&\left\{ v\in L^2(\Omega):v|_{T}\in S_h(T) \text{~defined by \eqref{FE_space_loc} or \eqref{loc_IFE_space}~~}
\forall T \in \mathcal{T}_h,  \right. \\
& \hspace{0.2in} \left. v|_{T_1}(X)=v|_{T_2}(X)~\forall X \in \mathcal{N}_h, \forall\,T_1, T_2 \in \mathcal{T}_h \text{~such that~} X \in T_1\cap T_2 \right\}.
\end{split}
\end{equation}
The global IFE functions in $S_h(\Omega)$ defined by \eqref{glob_IFE_space} are continuous at all the mesh nodes and all the non-interface edges, and this global IFE space $S_h(\Omega)$ is isomorphic to the standard trilinear finite element space defined on the same mesh, in terms of the number and the location of their global degrees of freedom. We will demonstrate how this global IFE space can be used in a PPIFE scheme for solving the interface problems \eqref{model}.

\commentout{
To end this section, we present the partially penalized IFE (PPIFE) scheme to solve the interface problem \eqref{model} based on the Lagrange IFE functions derived above. We consider a underline space
\begin{equation}
\begin{split}
\label{underline_spa}
V_h(\Omega) = &\left\{ v\in L^2(\Omega)~:~ v\in H^2(T)~\forall T\in\mathcal{T}^n_h, ~ v\in H^2(T^{\pm})~\forall T\in\mathcal{T}^i_h; \right. \\
& \hspace{0.2in} \left. \text{and} ~ v ~ \text{is continuous at each} ~ X\in\mathcal{N}_h ~ \text{and each} ~ e\in\mathcal{E}^n_h \right\}.
\end{split}
\end{equation}
The IFE shape functions \eqref{FE_space_loc} and \eqref{loc_IFE_space_span} enable us to define the global IFE space as a subspace of $V_h(\Omega)$:
\begin{equation}
\begin{split}
\label{glob_IFE_space}
S_h(\Omega)=&\left\{ v\in L^2(\Omega):v|_{T}\in S_h(T); \right. \\
& \hspace{0.2in} \left. v|_{T_1}(X)=v|_{T_2}(X)~\forall X \in \mathcal{N}_h, \forall\,T_1, T_2 \in \mathcal{T}_h \text{~such that~} X \in T_1\cap T_2 \right\}.
\end{split}
\end{equation}
We highlight that the global IFE functions in $S_h(\Omega)$ are continuous at all the mesh nodes and all the non-interface edges, but may not be continuous across the interface edges. This feature makes the global IFE space $S_h(\Omega)$ isomorphic to the standard trilinear finite element space defined on the same mesh. Thus, the location and total number of the degrees of freedom are fixed independent of the interface location. Now, we define a bilinear form $a:V_h(\Omega)\times V_h(\Omega)\rightarrow\mathbb{R}$ such that
\begin{equation}
\begin{split}
\label{ppife_1}
a_h(u,v) =& \sum_{T\in\mathcal{T}_h} \int_T \beta \nabla u \cdot \nabla v dX \\
 - & \sum_{F\in\mathring{\mathcal{F}}_h} \int_F \{ \beta \nabla u\cdot \mathbf{ n} \} [v] ds  + \epsilon \sum_{F\in\mathring{\mathcal{F}}_h} \int_F \{ \beta \nabla v\cdot \mathbf{ n} \} [u] ds + \sum_{F\in\mathring{\mathcal{F}}_h} \frac{\sigma^0}{|F|} \int_F [u]\,[v] ds,\\
\end{split}
\end{equation}
and a linear form $L: V_h(\Omega)\rightarrow \mathbb{R}$ such that 
\begin{equation}
\label{ppife_2}
L(v) = \int_{\Omega} f v dX. 
\end{equation}
Then, the proposed IFE method is to find $u_h\in S_h(\Omega)$ such that
\begin{equation}
\label{ppife}
a_h(u_h,v_h) = L(v_h), ~~~ \forall v_h\in S_h(\Omega).
\end{equation}
The similar scheme was already used in \cite{2015LinLinZhang} for 2-D interface problems. The terms defined on interface faces in \eqref{ppife_1} are essentially interior penalties \cite{1982Arnold,1976DouglasDupont} which are used to handle the discontinuities across the interface faces. Because of this similarity, we call \eqref{ppife_1}-\eqref{ppife} symmetric, non-symmetric and incomplete PPIFE (SPPIFE, NPPIFE and IPPIFE) methods for $\epsilon=-1,1$ and $0$, respectively.
}


\section{Approximation Capabilities of Trilinear IFE Spaces}
\label{sec:approximation}

In this section, we will prove that the trilinear IFE spaces developed in the last section have the optimal approximation capability.
The analysis approach is in the spirit of \cite{2019GuoLin,2015GuzmanSanchezSarkisP1}. In the discussion from now, we let $u^+_E\in H^2(\Omega)$ and $u^-_E\in H^2(\Omega)$ be the Sobolev extensions of the components $u^+\in H^2(\Omega^+)$ and $u^-\in H^2(\Omega^-)$ of every function $u\in PH^2(\Omega)$, respectively, such that
\begin{equation}
\label{sobolev_ext}
\| u^s_E \|_{H^2(\Omega)} \leqslant C_E \| u^s \|_{H^2(\Omega^s)}, ~~~ s=\pm,
\end{equation}
for some constant $C_E$. For each interface element $T$, we let $\tau_{\omega_T}=\omega_T \cap \tau(T)$,  $\Gamma_{\omega_T}=\omega_T \cap \Gamma$ where $\omega_T$ is the patch around $T$,
and let $\lambda_{\omega_T}$ be the radius of the largest circle on the plane $\tau(T)$ inscribed inside $\tau_{\omega_T}$, see
the illustration in Figure \ref{fig:patch} where  the cube with solid lines is $T \in \mathcal{T}_h^i$ and the cube with dashed lines is
its patch $\omega_T$. Since $T$ is the center element of its patch, there exists a constant $\delta>0$ independent of interface location such that
\begin{equation}
\label{tau_lambda_diam}
|\lambda_{\omega_T}| \geqslant \delta  h.
\end{equation}
For simplicity's sake, all the generic constants $C$ used in the following discussion are independent of interface location, mesh size $h$ and coefficients $\beta^{\pm}$. We start from two lemmas for properties on $\tau_{\omega_T}$. The first one is about
a kind of trace inequality and similar results have been used in the unfitted mesh finite element methods \cite{2015BurmanClaus,2019GuoLin,2002HansboHansbo}.
\begin{lemma}
\label{lem_trace_inequa_pyd}
There exists a constant $C$ such that
\begin{equation}
\label{trace_inequa_pyd_eq0}
\| v \|_{L^2(\tau_{\omega_T})} \leqslant C \left( h^{-1/2} \| v \|_{L^2(\omega_T)} + h^{1/2} \| \nabla v \|_{L^2(\tau_{\omega_T})} \right) ~~~ \forall v\in H^1(\omega_T),~~\forall T \in \mathcal{T}_h^i.
\end{equation}
\end{lemma}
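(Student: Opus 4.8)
The plan is to reduce \eqref{trace_inequa_pyd_eq0} to a purely local trace inequality on a single mesh element and then sum over the patch. Since $T\in\mathcal{T}_h^i$ and $\mathcal{T}_h$ is a Cartesian mesh, $\omega_T$ consists of a number of elements bounded independently of $h$ and of the interface, each of diameter comparable to $h$, and $\tau_{\omega_T}=\omega_T\cap\tau(T)$ is, up to a set of surface measure zero, the disjoint union of the planar pieces $P_{T'}:=T'\cap\tau(T)$ over those $T'\in\omega_T$ that actually meet $\tau(T)$. So it suffices to prove that, for each such $T'$,
\begin{equation*}
\| v \|_{L^2(P_{T'})} \leqslant C\left( h^{-1/2}\| v \|_{L^2(T')} + h^{1/2}\| \nabla v \|_{L^2(T')} \right),\qquad\forall\,v\in H^1(T'),
\end{equation*}
with $C$ independent of $h$, of $T'$, and --- the essential point --- of how the plane $\tau(T)$ slices $T'$; squaring, summing over the boundedly many elements of $\omega_T$ (so the volume terms reassemble into norms over $\omega_T$), and taking a square root then gives \eqref{trace_inequa_pyd_eq0}. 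Note that the flatness and maximal-angle estimates of Section \ref{sec:geometry} play no role here: the bound holds for an arbitrary plane, their purpose being only to justify that $\tau(T)$ approximates $\Gamma$ well, which is irrelevant for this particular trace bound.

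For the local estimate I would use the standard reference-element scaling, as in the cited unfitted-mesh references: map $T'$ onto the unit cube $\hat Q=(0,1)^3$ by the affine (diagonal rescaling plus translation) map; this carries $P_{T'}$ onto a plane section $\hat P$ of $\hat Q$, and, since the surface measure scales like $h^2$, the volume measure like $h^3$ and a gradient like $h^{-1}$, the factors $h^{-1/2}$ and $h^{1/2}$ appear exactly as in any such argument. It then remains to establish $\|\hat v\|_{L^2(\hat P)}\leqslant C\,\|\hat v\|_{H^1(\hat Q)}$ with $C$ independent of the plane cutting $\hat Q$. For this, choose the coordinate direction $x_k$ along which the unit normal of the plane has its largest component (hence of modulus $\geqslant 1/\sqrt3$); then $\hat P$ is the graph $x_k=\varphi(x')$ of an affine function of the remaining two variables $x'$, with $|\nabla\varphi|\leqslant\sqrt2$, over some region $\hat S\subseteq(0,1)^2$, so its surface measure is at most $\sqrt3$ times Lebesgue measure on $\hat S$. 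Writing, for $x'\in\hat S$ and $t\in(0,1)$, $\hat v(x',\varphi(x'))^2=\hat v(x',t)^2+\int_t^{\varphi(x')}\partial_k(\hat v^2)(x',s)\,ds$, bounding the last integral by $2\int_0^1|\hat v(x',s)|\,|\partial_k\hat v(x',s)|\,ds$, averaging in $t$ and integrating in $x'$, one gets $\|\hat v\|_{L^2(\hat P)}^2\leqslant\sqrt3\,(\|\hat v\|_{L^2(\hat Q)}^2+2\|\hat v\|_{L^2(\hat Q)}\|\nabla\hat v\|_{L^2(\hat Q)})\leqslant C\|\hat v\|_{H^1(\hat Q)}^2$. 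Scaling back and summing over $\omega_T$ finishes the proof; for $v\in H^1(\omega_T)$ the trace on $\tau_{\omega_T}$ is understood in the usual sense, the inequality being first proved for $v\in C^\infty(\overline{\omega_T})$ and extended by density.

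The one place that genuinely needs attention is the uniformity of $C$ with respect to the cutting plane --- that the inequality should not degenerate when $\tau(T)$ nearly grazes a face or a vertex of $T'$ --- and the graph representation above is precisely what takes care of it: because the right-hand side always carries the full element $T'$, a tiny section $P_{T'}$ merely makes the left-hand side small, and no small-measure denominator ever enters. The only other structural inputs are that the cardinality of $\omega_T$ and the ratios of element sizes inside $\omega_T$ are uniformly bounded for the Cartesian meshes under consideration, which is what makes the patch summation harmless. In particular I do not expect to need (\textbf{H1})--(\textbf{H3}) or Lemma \ref{lem_element_angle_est} / Theorem \ref{lem_interf_element_est} for this lemma.
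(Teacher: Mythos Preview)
Your argument is correct and self-contained; the paper's own ``proof'' is a one-line citation of Lemma~3.2 in \cite{2016WangXiaoXu}, so you are essentially supplying the details behind that citation rather than taking a genuinely different route. The decomposition into element pieces $P_{T'}$, the scaling to the reference cube, and the graph-plus-FTC bound are exactly the standard mechanism such cited trace inequalities rely on, and your emphasis on uniformity in the cutting plane (via the largest-normal-component choice of slicing direction) is the right point to stress.

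One observation worth making explicit: the statement of the lemma as printed has $h^{1/2}\|\nabla v\|_{L^2(\tau_{\omega_T})}$ on the right, which for a generic $v\in H^1(\omega_T)$ is not even well defined (the trace of an $L^2$ gradient on a hyperplane need not exist). Your local estimate has $h^{1/2}\|\nabla v\|_{L^2(T')}$ and hence, after summing, $h^{1/2}\|\nabla v\|_{L^2(\omega_T)}$ --- the volume norm. This is both the correct statement and the one the paper actually \emph{uses}: see \eqref{proj_ext_estimate_eq2} and the applications in Appendix~\ref{App_A1}, where the gradient term always appears as an $H^1(\omega_T)$ or $H^2(\omega_T)$ volume norm. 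So what you prove is what is needed; the $\tau_{\omega_T}$ subscript on the gradient term in \eqref{trace_inequa_pyd_eq0} appears to be a typographical slip for $\omega_T$, and you may want to flag that when writing this up.
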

\begin{proof}
It is basically the application of Lemma 3.2 in \cite{2016WangXiaoXu} onto $\omega_T$ and its subset $\tau_{\omega_T}$.
\end{proof}
The second lemma is for a kind of inverse inequality.
\begin{lemma}
\label{pt_tau}
There exists a constant $C$ such that
\begin{equation}
\label{pt_tau_eq0}
\|  p(X_0) \| \leqslant C  h^{-1} \| p \|_{L^2(\tau_{\omega_T})}, ~~~ \forall p\in \mathbb{Q}_1, ~~\forall X_0\in\tau_{\omega_T},
~~\forall T \in \mathcal{T}_h^i.
\end{equation}
\end{lemma}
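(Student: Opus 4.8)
The plan is to reduce the pointwise bound \eqref{pt_tau_eq0} to a scale-free inverse estimate for bivariate polynomials of degree at most three on a planar region, exploiting the fact that only the restriction of $p$ to the plane $\tau(T)$ enters the inequality (both $X_0$ and the integration set $\tau_{\omega_T}$ lie on $\tau(T)$). First I would choose an orthonormal coordinate system $(\xi,\eta)$ on the plane $\tau(T)$ with origin at the centre of a disk of radius $\lambda_{\omega_T}$ inscribed in $\tau_{\omega_T}$, and write $B_\rho$ for the planar disk of radius $\rho$ about this origin. In these coordinates each Cartesian coordinate of a point of $\tau(T)$ is an affine function of $(\xi,\eta)$, so each monomial of a trilinear polynomial restricts to a product of at most three affine functions; hence $q:=p|_{\tau(T)}$ is a bivariate polynomial of degree at most three, with $q(X_0)=p(X_0)$ and $\|q\|_{L^2(\tau_{\omega_T})}=\|p\|_{L^2(\tau_{\omega_T})}$. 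Since $\tau_{\omega_T}\subseteq\omega_T$, the diameter of $\omega_T$ is at most $3\sqrt3\,h$, and the origin lies in the convex set $\tau_{\omega_T}$, I would record the two-sided inclusion $B_{\lambda_{\omega_T}}\subseteq\tau_{\omega_T}\subseteq B_{3\sqrt3\,h}$.

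Next I would rescale by $h$, setting $\hat q(\hat\xi,\hat\eta)=q(h\hat\xi,h\hat\eta)$ and $\hat\tau_{\omega_T}=h^{-1}\tau_{\omega_T}$; then $\hat q$ still has degree at most three, and by \eqref{tau_lambda_diam} the rescaled region satisfies $B_\delta\subseteq\hat\tau_{\omega_T}\subseteq B_{3\sqrt3}$. On the finite-dimensional space of bivariate polynomials of degree at most three all norms are equivalent, so in particular $\|r\|_{L^\infty(B_{3\sqrt3})}\leqslant C\|r\|_{L^2(B_\delta)}$ for every such $r$, with $C$ depending only on $\delta$. Applying this to $r=\hat q$ and using $B_\delta\subseteq\hat\tau_{\omega_T}$ gives $|\hat q(\hat X_0)|\leqslant\|\hat q\|_{L^\infty(B_{3\sqrt3})}\leqslant C\|\hat q\|_{L^2(B_\delta)}\leqslant C\|\hat q\|_{L^2(\hat\tau_{\omega_T})}$, where $\hat X_0=h^{-1}X_0\in\hat\tau_{\omega_T}$. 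Finally, since the surface measure on $\tau(T)$ scales by $h^2$, we have $\|q\|_{L^2(\tau_{\omega_T})}=h\,\|\hat q\|_{L^2(\hat\tau_{\omega_T})}$ and $p(X_0)=\hat q(\hat X_0)$, so $\|p(X_0)\|\leqslant C h^{-1}\|p\|_{L^2(\tau_{\omega_T})}$ with $C$ depending only on $\delta$, hence independent of the interface location, $h$, and $\beta^\pm$.

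The step to be careful about — more a pitfall than a serious obstacle — is that one must work with the planar restriction $q=p|_{\tau(T)}$ from the outset rather than trying to pass through a three-dimensional inverse estimate on $\omega_T$: the latter route fails because $\|p\|_{L^2(\omega_T)}$ is not controlled by $h\,\|p\|_{L^2(\tau_{\omega_T})}$, as a nonzero trilinear polynomial can vanish on $\tau(T)$ (for instance $p=L$) while being large on $\omega_T$. The uniformity of the final constant over all interface elements then rests entirely on the geometric lower bound \eqref{tau_lambda_diam} for $\lambda_{\omega_T}$, which is what traps the rescaled region $\hat\tau_{\omega_T}$ between the two fixed disks $B_\delta$ and $B_{3\sqrt3}$ independently of the element; the degenerate case $q\equiv 0$ is automatically covered since then both sides of \eqref{pt_tau_eq0} vanish.
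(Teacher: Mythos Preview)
Your proof is correct and takes a genuinely different route from the paper's. The paper does not rescale; instead it triangulates the convex polygon $\tau_{\omega_T}$ by joining $X_0$ to all of its vertices, picks the triangle $\triangle_1$ of largest area (which satisfies $|\triangle_1|\geqslant |\tau_{\omega_T}|/N_e\geqslant \pi\delta^2h^2/6$ by \eqref{tau_lambda_diam}), and then invokes the shape-independent inverse inequality $|p(X_0)|\leqslant C\,|\triangle_1|^{-1/2}\|p\|_{L^2(\triangle_1)}$ for polynomials on a triangle (citing Warburton--Hesthaven), from which the claim follows since $\triangle_1\subseteq\tau_{\omega_T}$. Your argument bypasses the triangulation and the external trace/inverse reference by trapping the rescaled region between two fixed disks $B_\delta\subseteq\hat\tau_{\omega_T}\subseteq B_{3\sqrt3}$ and appealing directly to norm equivalence on the finite-dimensional space of bivariate cubics; this is slightly more self-contained and makes the dependence of the constant on $\delta$ transparent, while the paper's route is more geometric and arguably yields a constant that is easier to track explicitly. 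Both arguments rest on the same essential input, namely the lower bound \eqref{tau_lambda_diam}, and your observation that one must work with the planar restriction $q=p|_{\tau(T)}$ (a degree-three bivariate polynomial) is exactly right and is implicit in the paper's use of the inverse inequality on $\triangle_1$.
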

\begin{proof}
Note that $\tau_{\omega_T}$ is a convex polygon with $N_e\leqslant 6$ edges. Connecting $X_0$ and the vertices of $\tau_{\omega_T}$, we obtain $N_e$ triangles denoted by $\triangle_i$, $i=1,\cdots,N_e$. Without loss of generality, we assume $|\triangle_1|\geqslant|\triangle_2|\geqslant\cdots\geqslant|\triangle_{N_e}|$. From \eqref{tau_lambda_diam}, we have $N_e|\triangle_1|\geqslant\sum_{i=1}^{N_e}|\triangle_i|=|\tau_{\omega_T}|\geqslant \pi \lambda^2_{\omega_T} \geqslant \pi\delta^2h^2$. Then, on $|\triangle_1|$, we apply the standard trace inequality for polynomials \cite{2003WarburtonHesthaven} to obtain
\begin{equation}
\label{pt_tau_eq1}
\|p(X_0)\|\leqslant C |\triangle_1|^{-1/2} \| p \|_{L^2(\triangle_1)} \leqslant C \sqrt{ |N_e|/|\tau_{\omega_T}| } \|p\|_{L^2(\tau_{\omega_T})} \leqslant C h^{-1} \|p\|_{L^2(\tau_{\omega_T})},
\end{equation}
which finishes the proof.
\end{proof}
Then, we derive two estimates for the Sobolev extensions $u^{\pm}_E$ on $\tau_{\omega_T}$.
\begin{lemma}
\label{lem_upm_tau}
Let $\mathcal{T}_h$ be a Cartesian mesh whose mesh size is small enough such that the \textbf{Assumptions} (\textbf{H1})-(\textbf{H3}) hold and conditions in Theorem \ref{lem_interf_element_est} are satisfied for each patch $\omega_T, \forall T \in \mathcal{T}_h^i$.
Then the following estimates hold for every $u\in PH^2(\Omega)$ on each $T\in \mathcal{T}^i_h$:
\vspace{-0.2in}
\begin{subequations}
\label{upm_tau_eq0}
\begin{align}
   & \| u^+_E - u^-_E \|_{L^2(\tau_{\omega_T})} \leqslant C \left( \sum_{s=\pm}  h^{3/2} | u^s_E |_{H^1(\omega_T)}  +   h^{5/2} | u^s_E |_{H^2(\omega_T)}  \right),  \label{upm_tau_eq01}    \\
   &  \| \beta^+ \nabla u^+_E\cdot\bar{\mathbf{ n}} - \beta^- \nabla u^-_E\cdot\bar{\mathbf{ n}} \|_{L^2(\tau_{\omega_T})} \leqslant C \left( \sum_{s=\pm}  h^{1/2} \beta^s | u^s_E |_{H^1(\omega_T)} +  h \beta^s | u^s_E |_{H^2(\omega_T)} \right).  \label{upm_tau_eq02}
\end{align}
\end{subequations}
\end{lemma}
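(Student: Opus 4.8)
The plan is to prove both estimates by the same mechanism: for $u\in PH^2(\Omega)$ the jump conditions \eqref{jump_cond_1}--\eqref{jump_cond_2} hold \emph{exactly} on the true surface $\Gamma_{\omega_T}$, so I first read off the corresponding information on $\Gamma_{\omega_T}$ and then transport it to the plane slice $\tau_{\omega_T}$, paying with the geometric smallness of Theorem~\ref{lem_interf_element_est}. Write $\bar{\mathbf n}=\bar{\mathbf n}(K_T)$. For each $X_\bot\in\tau_{\omega_T}$ let $X=X(X_\bot)$ be the point of $\Gamma_{\omega_T}$ on the line $X_\bot+t\bar{\mathbf n}$: it exists and is unique because the hypotheses of Theorem~\ref{lem_interf_element_est} on the patch make the orthogonal projection $\Gamma_{\omega_T}\to\tau(T)$ bijective, and by \eqref{interf_element_est_eq0_dist} one has $X=X_\bot+d(X_\bot)\bar{\mathbf n}$ with $|d(X_\bot)|\le C\kappa h^2$. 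By \eqref{interf_element_est_prod} the change of variables $X_\bot\mapsto X$ has Jacobian in $[1,2]$, and since $\bar{\mathbf n}$ is a unit normal to $\tau(T)$ the thin slab $S$ swept by the segments $\overline{X_\bot X}$ satisfies $\int_S f=\int_{\tau_{\omega_T}}\int_0^{d(X_\bot)}f(X_\bot+t\bar{\mathbf n})\,dt\,dX_\bot$ and lies in $\omega_T$ for $h$ small. I will also use a trace inequality on $\Gamma_{\omega_T}$ of the same type as Lemma~\ref{lem_trace_inequa_pyd} (valid since $\Gamma$ is $C^2$ and (\textbf{H1})--(\textbf{H3}) control the local configuration). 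All constants will depend only on $\kappa,\delta,\epsilon$, never on $h$, the interface position, or $\beta^\pm$.

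For \eqref{upm_tau_eq01} I would set $w=u^+_E-u^-_E\in H^2(\omega_T)$, so $w=0$ on $\Gamma_{\omega_T}$ by \eqref{jump_cond_1}. Along $\overline{X_\bot X}$ the fundamental theorem of calculus gives $w(X_\bot)=-\int_0^{d(X_\bot)}\nabla w(X_\bot+t\bar{\mathbf n})\cdot\bar{\mathbf n}\,dt$ for a.e.\ $X_\bot$, and Cauchy--Schwarz with $|d(X_\bot)|\le C\kappa h^2$ yields $\|w\|_{L^2(\tau_{\omega_T})}^2\le C\kappa h^2\|\nabla w\|_{L^2(S)}^2$. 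Bounding $\|\nabla w\|_{L^2(S)}$ crudely by $\|\nabla w\|_{L^2(\omega_T)}$ gives only order $h$, short of the claimed $h^{3/2}$; the remedy is to exploit the $O(\kappa h^2)$ thinness of $S$ a second time. Writing $\nabla w$ at a point of $S$ as its value at the nearest point of $\Gamma$ (via the $r_\Gamma$-tubular neighborhood, Lemma~\ref{tubular}) plus an $H^2$ remainder and integrating, one gets $\|\nabla w\|_{L^2(S)}^2\le C\kappa h^2\|\nabla w\|_{L^2(\Gamma_{\omega_T})}^2+C\kappa^2 h^4|w|_{H^2(\omega_T)}^2$; the trace inequality on $\Gamma_{\omega_T}$ then bounds $\|\nabla w\|_{L^2(\Gamma_{\omega_T})}\le C(h^{-1/2}|w|_{H^1(\omega_T)}+h^{1/2}|w|_{H^2(\omega_T)})$. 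Assembling these and discarding pieces that are lower order because $\kappa h\le\epsilon$ gives $\|w\|_{L^2(\tau_{\omega_T})}\le C(h^{3/2}|w|_{H^1(\omega_T)}+h^{5/2}|w|_{H^2(\omega_T)})$, and $|w|_{H^k(\omega_T)}\le|u^+_E|_{H^k(\omega_T)}+|u^-_E|_{H^k(\omega_T)}$ finishes \eqref{upm_tau_eq01}. (Equivalently one may Taylor-expand $w$ to second order about $X$, killing the zeroth-order term by $w(X)=0$, controlling the first-order term by $|d|\le C\kappa h^2$, and the remainder by $|d|$ together with $|w|_{H^2(\omega_T)}$.)

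For \eqref{upm_tau_eq02} I would set $g=\beta^+\nabla u^+_E\cdot\bar{\mathbf n}-\beta^-\nabla u^-_E\cdot\bar{\mathbf n}\in H^1(\omega_T)$. On $\Gamma_{\omega_T}$, \eqref{jump_cond_2} gives $\beta^+\nabla u^+_E\cdot\mathbf n(X)=\beta^-\nabla u^-_E\cdot\mathbf n(X)$, hence $g(X)=(\beta^+\nabla u^+_E(X)-\beta^-\nabla u^-_E(X))\cdot(\bar{\mathbf n}-\mathbf n(X))$, which by \eqref{interf_element_est_diff} is pointwise at most $C\kappa h\sum_{s=\pm}\beta^s|\nabla u^s_E(X)|$; the $\Gamma_{\omega_T}$ trace inequality then yields $\|g\|_{L^2(\Gamma_{\omega_T})}\le C\kappa\sum_{s=\pm}\beta^s(h^{1/2}|u^s_E|_{H^1(\omega_T)}+h^{3/2}|u^s_E|_{H^2(\omega_T)})$. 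Transporting to $\tau_{\omega_T}$ via $g(X_\bot)=g(X)-\int_0^{d(X_\bot)}\nabla g(X_\bot+t\bar{\mathbf n})\cdot\bar{\mathbf n}\,dt$, the change of variables bounds the first term by $C\|g\|_{L^2(\Gamma_{\omega_T})}$ and Cauchy--Schwarz with $|d|\le C\kappa h^2$ bounds the second by $C(\kappa h^2)^{1/2}\|\nabla g\|_{L^2(\omega_T)}$; since $\bar{\mathbf n}$ is a fixed vector, $\|\nabla g\|_{L^2(\omega_T)}\le\sum_{s=\pm}\beta^s|u^s_E|_{H^2(\omega_T)}$. Combining and again using $\kappa h\le\epsilon$ to absorb the lower-order terms yields \eqref{upm_tau_eq02}. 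Here the refinement of the previous paragraph is not needed, since the jump condition only forces $g$ to be small --- not zero --- on $\Gamma_{\omega_T}$, and that smallness already has the right order.

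The only genuinely delicate point is the sharpness of \eqref{upm_tau_eq01}: one transfer across the slab loses half a power of $h$, and recovering the optimal $h^{3/2}$ forces the two-layer argument above (transfer, then a trace estimate for $\nabla w$ on $\Gamma_{\omega_T}$ that again uses the $O(\kappa h^2)$ thinness of $S$). The rest --- the standard restriction-of-Sobolev-functions-to-lines facts, the $\Gamma_{\omega_T}$ trace inequality, and the bookkeeping that keeps every bound linear in $\beta^s$, which works precisely because $\bar{\mathbf n}$ is constant on $T$ --- is routine.
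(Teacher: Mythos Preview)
Your proposal is correct, and for \eqref{upm_tau_eq02} it coincides with the paper's argument almost verbatim: set $w=\beta^+u^+_E-\beta^-u^-_E$, use \eqref{interf_element_est_diff} to bound $\|\nabla w\cdot\bar{\mathbf n}\|_{L^2(\Gamma_{\omega_T})}$, then transport by a first-order Taylor step to $\tau_{\omega_T}$.

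For \eqref{upm_tau_eq01} the paper takes a shorter route than your primary two-layer argument. Instead of writing $w(X_\bot)=-\int_0^{d}\nabla w\cdot\bar{\mathbf n}\,dt$ and then doing a second transfer for $\nabla w$, the paper Taylor-expands to \emph{second} order about the \emph{plane} point $X_\bot$ (your notation):
\[
0=w(X)=w(X_\bot)+\nabla w(X_\bot)\cdot(X-X_\bot)+\int_0^{d}\tfrac{t}{2}\,\partial_{\bar{\mathbf n}}^2 w\,dt .
\]
The first-order term now carries $\nabla w(X_\bot)$ evaluated on the flat slice $\tau_{\omega_T}$, so the already-proved trace inequality of Lemma~\ref{lem_trace_inequa_pyd} applies directly and yields the factor $h^2\cdot h^{-1/2}=h^{3/2}$ in one shot; the integral remainder is handled by H\"older with $|d|^{3/2}\le Ch^3$. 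This bypasses both your slab estimate for $\|\nabla w\|_{L^2(S)}$ and any trace inequality on the curved surface $\Gamma_{\omega_T}$ for part~(a). Your parenthetical alternative (``Taylor-expand to second order about $X$'') is essentially this idea, but by centering at $X\in\Gamma$ you would still need a $\Gamma_{\omega_T}$-trace bound for $\nabla w$; centering at $X_\bot\in\tau_{\omega_T}$ as the paper does lets you reuse Lemma~\ref{lem_trace_inequa_pyd} and keeps the argument entirely on the flat plane.
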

\begin{proof}
See the Appendix \ref{App_A1}.
\end{proof}

\begin{figure}[h]
\centering
\begin{minipage}{.4\textwidth}
  \centering
  \includegraphics[width=0.6\textwidth]{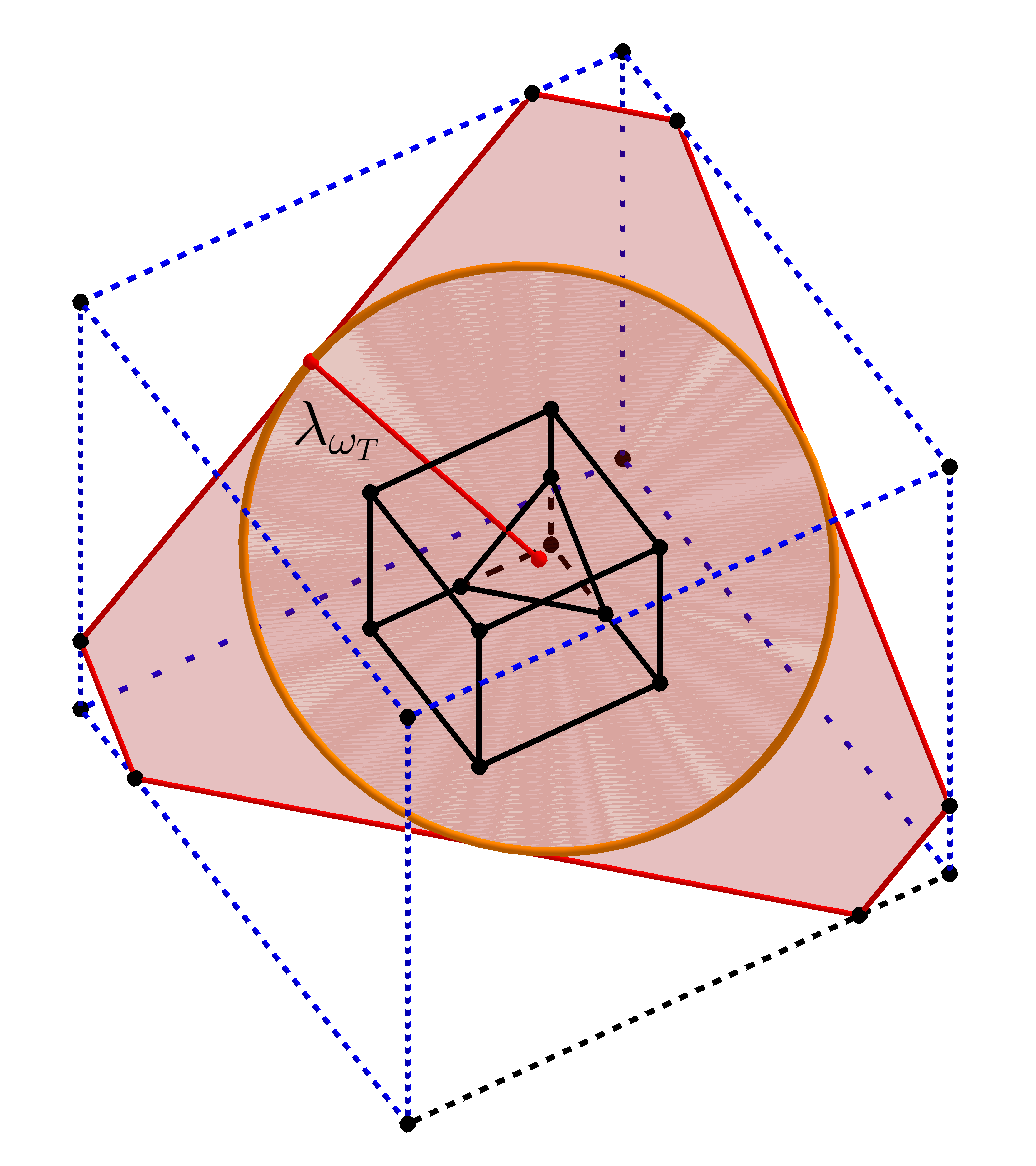}
  \caption{Patch of an interface element}
  \label{fig:patch}
\end{minipage}
\hspace{1cm}
\begin{minipage}{.3\textwidth}
  \centering
  \includegraphics[width=0.9\textwidth]{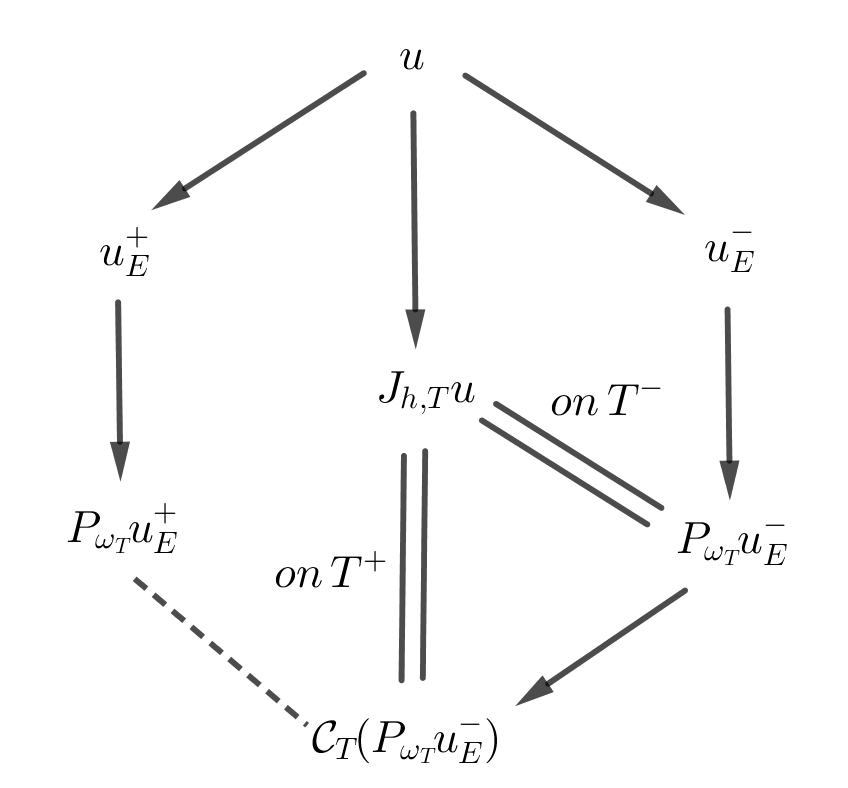}
  \caption{Diagram for analyzing approximation capabilities}
  \label{fig:diagram}
\end{minipage}
\end{figure}

Now, for every interface element $T$, we let $P_{\omega_T}$ be the standard projection operator from $H^2(\omega_T)$ to $\mathbb{Q}_1$ on $\omega_T$. Also, since functions in $S_h(T)$ are piecewise polynomials, we can use each of their polynomial components on the whole patch $\omega_T$ in the following discussion. To analyze the approximation capabilities of local IFE spaces $S_h(T)$, we consider the following ``interpolation" operator on the patch $\omega_T$ of $T$: $J_h~:~PH^2(\omega_T)\rightarrow S_h(T)$ such that
\begin{equation}
\label{Jh_def}
J_{h,T} u =
\left\{\begin{array}{cc}
J^-_{h,T} u := P_{\omega_T} u^-_E  ~~~~~~~~~ &\text{on} ~ T^- ,\\
J^+_{h,T} u := \mathcal{C}_T (P_{\omega_T} u^-_E) ~~~~~ &\text{on} ~ T^+,
\end{array}\right.
\end{equation}
in which $\mathcal{C}_T$ is the extension operator defined in \eqref{ext_opera}. By definition, $J_{h,T} u \in S_h(T)$, and our goal
is to show that $J_{h,T} u$ can approximate $u \in PH^2(\omega_T)$ optimally. The key idea for the analysis of the error in
$J_{h,T} u$ is illustrated by the diagram in Figure \ref{fig:diagram} in which each solid arrow indicates a well-understood relation and our main work is to estimate the difference between $P_{\omega_T}u^+_E$ and $\mathcal{C}_T(P_{\omega_T}u^-_E)$. We will use the following
functional to gauge this error:
\begin{equation}
\label{norm}
\vertiii{v}_{h,\omega_T} = \| v \|_{L^2(\tau_{\omega_T})} +  h \| \nabla v\cdot\bar{\mathbf{ n}} \|_{L^2(\tau_{\omega_T})} +  h^{3/2} | v |_{H^2(\omega_T)},~~\forall v \in H^2(\omega_T).
\end{equation}
We now show that $\vertiii{\cdot}_{h,\omega_T}$ is a norm equivalent to $\|\cdot\|_{L^2(\omega_T)}$ for functions
in $\mathbb{Q}_1$.
\begin{lemma}
\label{lem_norm_equi}
$\vertiii{\cdot}_{h,\omega_T}$ is a norm on $\mathbb{Q}_1$. Furthermore, there exist constants $c$ and $C$ such that
\begin{equation}
\label{norm_equi_eq0}
c\|p\|_{L^2(\omega_T)}  \leqslant  h^{1/2} \vertiii{p}_{h,\omega_T} \leqslant C \|p\|_{L^2(\omega_T)}, ~~~ \forall p\in \mathbb{Q}_1.
\end{equation}
\end{lemma}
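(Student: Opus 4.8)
The plan is to verify the norm axioms for $\vertiii{\cdot}_{h,\omega_T}$ directly, and then obtain the two-sided bound \eqref{norm_equi_eq0} by dilating $\omega_T$ to unit size --- which eliminates the $h$-dependence --- and recovering uniformity in the interface location through a compactness argument on the rescaled configuration.

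For the norm axioms, positive homogeneity and the triangle inequality hold term by term in \eqref{norm}, so only definiteness requires an argument, and here I expect all three terms of \eqref{norm} to enter. Suppose $p\in\mathbb{Q}_1$ and $\vertiii{p}_{h,\omega_T}=0$. Vanishing of $|p|_{H^2(\omega_T)}$ over the set $\omega_T$ of positive measure forces every second-order derivative of $p$ to vanish identically, which for a trilinear polynomial is equivalent to $p$ being affine, $p(X)=a+\mathbf{b}\cdot X$; in particular $\nabla p\equiv\mathbf{b}$ and $\nabla p\cdot\bar{\mathbf{n}}$ is constant. Vanishing of $\|p\|_{L^2(\tau_{\omega_T})}$ says $p\equiv0$ on $\tau_{\omega_T}$, and since $\tau_{\omega_T}$ has positive two-dimensional measure inside the plane $\tau(T)=\{L=0\}$ (by \eqref{tau_lambda_diam} it even contains a disk of radius $\delta h$), it is not contained in any line of that plane, so the affine function $p$ vanishes on all of $\tau(T)$, whence $p=\lambda L$ for some $\lambda\in\mathbb{R}$. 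Then $\nabla p\cdot\bar{\mathbf{n}}=\lambda\,\bar{\mathbf{n}}\cdot\bar{\mathbf{n}}=\lambda$, and the vanishing of $\|\nabla p\cdot\bar{\mathbf{n}}\|_{L^2(\tau_{\omega_T})}$ forces $\lambda=0$, hence $p\equiv0$.

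Next I would reduce \eqref{norm_equi_eq0} to an $h$-free statement. Let $\Phi(\widehat X)=X_T+h\widehat X$ be the affine dilation carrying a fixed unit reference cube $\widehat T$ onto $T$, and set $\widehat\omega=\Phi^{-1}(\omega_T)$, $\widehat\tau=\Phi^{-1}(\tau_{\omega_T})$, $\widehat p=p\circ\Phi$. Since $\Phi$ scales each coordinate separately, $\widehat p\in\mathbb{Q}_1$ whenever $p\in\mathbb{Q}_1$; moreover $\widehat\omega$ is a union of unit cubes abutting $\widehat T$ --- hence one of finitely many fixed shapes, all containing $\widehat T$ and contained in a fixed box $B$ --- and by \eqref{tau_lambda_diam} the planar set $\widehat\tau$ has area $\ge\pi\delta^2$. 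Tracking the change of variables in \eqref{norm} (the volume integral gains a factor $h^3$, the surface integrals a factor $h^2$, and each gradient and each second derivative a factor $h^{-1}$ and $h^{-2}$ respectively) gives $\|p\|_{L^2(\omega_T)}=h^{3/2}\|\widehat p\|_{L^2(\widehat\omega)}$ and $h^{1/2}\vertiii{p}_{h,\omega_T}=h^{3/2}\vertiii{\widehat p}_{1,\widehat\omega}$, where $\vertiii{\widehat p}_{1,\widehat\omega}:=\|\widehat p\|_{L^2(\widehat\tau)}+\|\nabla\widehat p\cdot\bar{\mathbf{n}}\|_{L^2(\widehat\tau)}+|\widehat p|_{H^2(\widehat\omega)}$ is \eqref{norm} evaluated with $h=1$ on $\widehat\omega$. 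Hence it suffices to prove $c\,\|\widehat p\|_{L^2(\widehat\omega)}\le\vertiii{\widehat p}_{1,\widehat\omega}\le C\,\|\widehat p\|_{L^2(\widehat\omega)}$ with $c,C$ independent of the configuration.

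The upper bound here is routine: each term of $\vertiii{\widehat p}_{1,\widehat\omega}$ is $\le C\|\widehat p\|_{W^{2,\infty}(B)}$ (using $|\widehat\tau|,|\widehat\omega|\le|B|$ and $|\bar{\mathbf{n}}|=1$), and on the fixed cube $\widehat T\subseteq\widehat\omega$ all norms of the finite-dimensional space $\mathbb{Q}_1$ are equivalent, so $\|\widehat p\|_{W^{2,\infty}(B)}\le C\|\widehat p\|_{L^2(\widehat T)}\le C\|\widehat p\|_{L^2(\widehat\omega)}$. The lower bound is the heart of the matter, and the main obstacle is uniformity in the interface position. For each admissible configuration the definiteness argument above (run with $h=1$, needing only that $\widehat\tau$ has positive area) shows $\vertiii{\cdot}_{1,\widehat\omega}$ is a genuine norm on $\mathbb{Q}_1$, hence is bounded below against $\|\cdot\|_{L^2(\widehat T)}$ by a positive constant. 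To make that constant uniform I would note that a configuration is determined by the shape of $\widehat\omega$ (finitely many) together with the unit normal $\bar{\mathbf{n}}\in S^2$ and the offset $c$ in $L=\bar{\mathbf{n}}\cdot X-c$; the requirement that $\widehat\tau$ keep area $\ge\pi\delta^2$ confines these parameters to a compact set on which the map $(\bar{\mathbf{n}},c,\widehat p)\mapsto\vertiii{\widehat p}_{1,\widehat\omega}$ is continuous (the polygon $\widehat\tau$ and the surface integrals over it vary continuously with $(\bar{\mathbf{n}},c)$, including across combinatorial changes of $\widehat\tau$). Minimizing $\vertiii{\widehat p}_{1,\widehat\omega}$ over this compact parameter set and the compact sphere $\{\|\widehat p\|_{L^2(\widehat T)}=1\}$ then yields a strictly positive lower bound, and undoing the dilation gives \eqref{norm_equi_eq0}. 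The conceptual reason all three ingredients of \eqref{norm} are present is that a nonzero affine multiple of $L$ is undetected by the first term alone, and that the uniformity would fail if $\tau_{\omega_T}$ were allowed to become thin --- which is exactly what \eqref{tau_lambda_diam}, ultimately a consequence of \textbf{(H1)}, rules out.
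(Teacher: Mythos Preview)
Your argument is correct, but it follows a genuinely different route from the paper's. The paper does not prove definiteness directly; instead it deduces it from the norm equivalence \eqref{norm_equi_eq0}, which it establishes constructively: a Taylor expansion of $p$ about a fixed point $X_0\in\tau_{\omega_T}$ reduces the $L^2(\omega_T)$ norm to $|p(X_0)|$, $|\nabla p(X_0)|$, and $\|H_p\|$, and each of these is then controlled using the pointwise inverse inequality of Lemma~\ref{pt_tau} (which in turn rests on \eqref{tau_lambda_diam}) together with a tangential inverse inequality on $\tau_{\omega_T}$; the upper bound comes from the trace inequality of Lemma~\ref{lem_trace_inequa_pyd} and a standard inverse estimate. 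Your scaling-and-compactness argument sidesteps Lemmas~\ref{lem_trace_inequa_pyd} and~\ref{pt_tau} entirely, replacing explicit inequalities by a uniform positivity statement over a compact parameter space. The paper's approach is more constructive and recycles auxiliary lemmas that are needed elsewhere in Section~\ref{sec:approximation}; yours is cleaner conceptually and would generalize with little change to other finite-dimensional polynomial spaces, at the cost of having to justify the continuity of $(\bar{\mathbf{n}},c)\mapsto\|\,\cdot\,\|_{L^2(\widehat\tau)}$ across combinatorial changes of the cross-section polygon, and of yielding no quantitative handle on the constants.
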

\begin{proof}
Since $\|\cdot\|_{L^2(\omega_T)}$ is a norm, we only need to prove \eqref{norm_equi_eq0}. Fix a point $X_0\in\tau_{\omega_T}$, then for each $p\in\mathbb{Q}_1$,
\begin{equation}
\label{norm_equi_eq1}
p(X) = p(X_0) + \nabla p(X_0)\cdot(X-X_0) + (X-X_0)^TH_p(X-X_0), ~~~ \forall X\in \omega_T,
\end{equation}
where $H_p$ is the Hessian matrix of $p$ and it is a constant matrix. Then, we have
\begin{equation}
\begin{split}
\label{norm_equi_eq2}
\| p \|_{L^2(\omega_T)} &\leqslant C h^{3/2} ( \|p(X_0)\| + h \|\nabla p(X_0)\| + h^2 \|H_p\| ).
\end{split}
\end{equation}
By Lemma \ref{pt_tau}, we have $\|p(X_0)\| \leqslant  C h^{-1} \| p \|_{L^2(\tau_{\omega_T})}$. Besides, we note that $\|\nabla p(X_0)\|\leqslant \|\nabla p(X_0)\cdot\bar{\mathbf{  n}}\| + \| \nabla p(X_0)\cdot\bar{\mathbf{  t}}_1 \| + \| \nabla p(X_0)\cdot\bar{\mathbf{  t}}_2 \| $ where $\bar{\mathbf{ t}}_1$ and $\bar{\mathbf{ t}}_2$ are two orthogonal tangential vectors to $\tau_{\omega_T}$ at $X_0$. Lemma \ref{pt_tau} directly implies $\|\nabla p(X_0)\cdot\bar{\mathbf{  n}}\| \leqslant C h^{-1}\|\nabla p\cdot\bar{\mathbf{ n}} \|_{L^2(\tau_{\omega_T})}$. Since $p$ and $\nabla p\cdot\bar{\mathbf{  t}}_i=\partial_{\bar{\mathbf{ t}}_i}p$ can be considered as a two-variable polynomial and its derivatives on $\tau_{\omega_T}$, we then apply \eqref{tau_lambda_diam}, Lemma \ref{pt_tau} and the inverse inequality given by Lemma 3.3 in \cite{2016WangXiaoXu} to obtain
\begin{equation}
\label{norm_equi_eq3}
\| \nabla p(X_0)\cdot\bar{\mathbf{  t}}_i \|\leqslant C h^{-1} \| \partial_{\bar{\mathbf{ t}}_i} p \|_{L^2(\tau_{\omega_T})} \leqslant Ch^{-1} \frac{1}{\lambda_{\omega_T}} \| p \|_{L^2(\tau_{\omega_T})} \leqslant Ch^{-2} \| p \|_{L^2(\tau_{\omega_T})}.
\end{equation}
Furthermore, note that $H_p$ is a constant matrix, we have $ h^{2} \|H_p\| = Ch^{1/2} |p|_{H^2(\omega_T)}$. Putting these estimates into \eqref{norm_equi_eq2}, we have the first inequality in \eqref{norm_equi_eq0}. The second inequality in \eqref{norm_equi_eq0} directly follows from Lemma \ref{lem_trace_inequa_pyd} and the standard inverse inequality for polynomials.
\end{proof}
The following lemma is a preparation for the estimation of $\mathcal{C}_T(P_{\omega_T}u^-_E)-P_{\omega_T}u^+_E$.
\commentout{
Since the IFE functions determined by \eqref{ife_shape_fun_1} and \eqref{ife_shape_fun_2} only satisfy the flux jump condition at a point $F\in\tau_{\omega_T}$, we further need the following result to estimate $\mathcal{C}_T(P_{\omega_T}u^-_E)-P_{\omega_T}u^+_E$.
}
\begin{lemma}
\label{lem_IFE_tau_est}
There exists a constant $C$ such that
\begin{equation}
\label{IFE_tau_est_eq0}
\| \beta^+\nabla v^+_h\cdot\bar{\mathbf{ n}} - \beta^-\nabla v^-_h\cdot\bar{\mathbf{ n}} \|_{L^2(\tau_{\omega_T})} \leqslant
C (\beta^+ - \beta^-) h^{1/2} \, |v^s_h|_{H^2(\omega_T)}, ~s=\pm,~\forall v_h\in S_h(T), ~\forall T \in \mathcal{T}_h^i.
\end{equation}
\end{lemma}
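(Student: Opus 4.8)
The plan is to exploit the fact that every IFE function $v_h \in S_h(T)$ satisfies the \emph{pointwise} flux jump condition \eqref{ife_shape_fun_2_2} at the centroid $F$ of $K_T$, and then to control the discrepancy on all of $\tau_{\omega_T}$ by the variation of the gradients away from $F$. Concretely, write $v_h^- = p$ and $v_h^+ = \mathcal{C}_T(p)$, and set $q(X) = \beta^+\nabla v_h^+(X)\cdot\bar{\mathbf{ n}} - \beta^-\nabla v_h^-(X)\cdot\bar{\mathbf{ n}}$. By \eqref{ife_shape_fun_2_2}, $q(F)=0$. Both $v_h^\pm$ are trilinear polynomials, so $q$ is (the restriction to $\tau(T)$ of) a polynomial whose gradient is a constant-coefficient object; in fact, since $\nabla p$ and $\nabla \mathcal{C}_T(p)$ have entries that are bilinear, $q$ is a low-degree polynomial on the plane $\tau_{\omega_T}$ vanishing at $F \in \tau_{\omega_T}$. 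The first key step is therefore a Taylor/Poincaré-type bound: $\|q\|_{L^2(\tau_{\omega_T})} \leqslant C\,\mathrm{diam}(\tau_{\omega_T})\,\|\nabla q\|_{L^2(\tau_{\omega_T})} \leqslant C h \|\nabla q\|_{L^2(\tau_{\omega_T})}$, using that $q(F)=0$ and $\mathrm{diam}(\tau_{\omega_T})\leqslant C h$.

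The second step is to bound $\|\nabla q\|_{L^2(\tau_{\omega_T})}$. Using the explicit formula \eqref{ext_c_express}, we have $\mathcal{C}_T(p) - p = (\beta^-/\beta^+ - 1)(\nabla p(F)\cdot\bar{\mathbf{ n}})\,L(X)$, so
\[
q(X) = \beta^+\nabla(\mathcal{C}_T(p))(X)\cdot\bar{\mathbf{ n}} - \beta^-\nabla p(X)\cdot\bar{\mathbf{ n}}
= (\beta^+-\beta^-)\big(\nabla p(X)\cdot\bar{\mathbf{ n}} - \nabla p(F)\cdot\bar{\mathbf{ n}}\big),
\]
since $\nabla L \cdot \bar{\mathbf{ n}} = 1$ (recall $L(X) = (X-D_1)\cdot\bar{\mathbf{ n}}$ and $\|\bar{\mathbf{ n}}\|=1$). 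This identity makes the factor $(\beta^+-\beta^-)$ explicit and reduces everything to the single polynomial component $p = v_h^-$. Then $\nabla q(X) = (\beta^+-\beta^-)\,H_p\,\bar{\mathbf{ n}}$ where $H_p$ is the (constant-entry in each monomial, hence essentially bounded) Hessian of the trilinear $p$; more carefully, since $p\in\mathbb{Q}_1$ its second derivatives are affine, but on $\tau_{\omega_T}$ one controls $\|\nabla q\|_{L^2(\tau_{\omega_T})} \leqslant C(\beta^+-\beta^-)\,|\tau_{\omega_T}|^{1/2}\,\|H_p\|_{L^\infty(\omega_T)}$, and by a standard scaling/inverse estimate on the cube $|H_p|$ relates to $|p|_{H^2(\omega_T)}$ via $h^2\|H_p\|_{L^\infty(\omega_T)} \leqslant C h^{1/2}|p|_{H^2(\omega_T)}$ (cf.\ the step $h^2\|H_p\| = Ch^{1/2}|p|_{H^2(\omega_T)}$ used in the proof of Lemma \ref{lem_norm_equi}). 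Collecting: $\|\nabla q\|_{L^2(\tau_{\omega_T})} \leqslant C(\beta^+-\beta^-)\,h^{-3/2}\,|p|_{H^2(\omega_T)}$, and combining with the Poincaré step gives $\|q\|_{L^2(\tau_{\omega_T})} \leqslant C(\beta^+-\beta^-)\,h^{-1/2}|p|_{H^2(\omega_T)}$ — which is off by one power of $h$ from the target.

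So the third, and decisive, step is to do the $\tau_{\omega_T}$-geometry bookkeeping correctly and \emph{not} lose the extra power of $h$: rather than bounding $\|q\|_{L^2(\tau_{\omega_T})}$ through $L^\infty$ of the Hessian, one should observe that $q(X) - q(F) = (\beta^+-\beta^-)(\nabla p(X) - \nabla p(F))\cdot\bar{\mathbf{ n}}$ is a \emph{polynomial on the plane} $\tau(T)$ that vanishes at $F$ and whose coefficients are bilinear expressions of the data of $p$; writing its $L^2(\tau_{\omega_T})$ norm via the equivalence of norms on the fixed-shape region $\tau_{\omega_T}$ and then invoking $\|X - F\| \leqslant C h$ on $\omega_T$ together with the identity $\nabla p(X)-\nabla p(F) = H_p\cdot(X-F) + (\text{higher bilinear terms})$, one gets $|q(X)| \leqslant C(\beta^+-\beta^-)\,h\,\|H_p\|$ pointwise on $\tau_{\omega_T}$, hence $\|q\|_{L^2(\tau_{\omega_T})} \leqslant C(\beta^+-\beta^-)\,h\,|\tau_{\omega_T}|^{1/2}\,\|H_p\|_{L^\infty} \leqslant C(\beta^+-\beta^-)\,h^2\,\|H_p\|_{L^\infty} \leqslant C(\beta^+-\beta^-)\,h^{1/2}\,|p|_{H^2(\omega_T)}$, which is exactly \eqref{IFE_tau_est_eq0}. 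Since $v_h^+$ and $v_h^-$ differ by a multiple of the linear function $L$, $|v_h^+|_{H^2(\omega_T)} = |v_h^-|_{H^2(\omega_T)}$, so the right-hand side may be written with either $s=+$ or $s=-$, completing the proof.

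The main obstacle is precisely the last point: a naive application of Poincaré on $\tau_{\omega_T}$ followed by an inverse inequality drops a full power of $h$, so one must keep track that $q$ vanishes \emph{at a point of $\tau_{\omega_T}$} and that $q - q(F)$ is controlled by $\|X-F\|\cdot\|H_p\|$ with $\|X-F\|\leqslant Ch$ — i.e.\ the key is to linearize in $X-F$ once rather than to bound gradient-by-gradient. The geometric input that $\tau_{\omega_T}$ has inradius $\geqslant \delta h$ (equation \eqref{tau_lambda_diam}) is what lets all the polynomial norm equivalences and the $|\tau_{\omega_T}|^{1/2}\leqslant Ch$ estimate hold with constants independent of the interface location.
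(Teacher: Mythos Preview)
Your approach is correct and is essentially the same as the paper's. Both arguments compute the explicit identity
\[
q(X)=\beta^+\nabla v_h^+\cdot\bar{\mathbf{n}}-\beta^-\nabla v_h^-\cdot\bar{\mathbf{n}}=(\beta^+-\beta^-)\bigl(\nabla v_h^{s}(X)\cdot\bar{\mathbf{n}}-\nabla v_h^{s}(F)\cdot\bar{\mathbf{n}}\bigr)
\]
(valid for either $s$, since $v_h^+-v_h^-$ is a multiple of the linear function $L$), observe $q(F)=0$, bound $|q(X)|\leqslant C(\beta^+-\beta^-)\,h\,\|H_{v_h^s}\|_{L^\infty(\omega_T)}$ via Taylor expansion around $F$, integrate over $\tau_{\omega_T}$ (area $\leqslant Ch^2$), and finish with the scaling $h^2\|H_{v_h^s}\|_{L^\infty}\leqslant Ch^{1/2}|v_h^s|_{H^2(\omega_T)}$ from the proof of Lemma~\ref{lem_norm_equi}. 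The paper's proof goes directly to this chain, whereas you record and then discard a Poincar\'e-plus-inverse detour that loses a power of $h$; dropping that detour would make your write-up match the paper almost line for line.
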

\begin{proof}
Without loss of generality, we only prove the case $s=+$ in \eqref{IFE_tau_est_eq0}. The relation \eqref{thm_poly_rela_eq1} yields $\beta^+\nabla v^+_h\cdot\bar{\mathbf{ n}} - \beta^-\nabla v^-_h\cdot\bar{\mathbf{ n}} = (\beta^+ - \beta^-) (\nabla v^+_h\cdot\bar{\mathbf{ n}} - \nabla v^+_h(F) \cdot\bar{\mathbf{ n}} ) =: w$ which is a linear polynomial with $w(F)=0$ by \eqref{ife_shape_fun_2_2}. Then, by applying similar arguments as \eqref{norm_equi_eq1} and \eqref{norm_equi_eq2} with
$X_0=F\in \tau_{\omega_T}$ on $\tau_{\omega_T}$, we have
\begin{equation}
\label{IFE_tau_est_eq1}
\| w \|_{L^2(\tau_{\omega_T})} \leqslant C h^{2} \|\nabla w \|  \leqslant C (\beta^+ - \beta^-) h^{1/2} | v^+_h |_{H^2(\omega_T)},
\end{equation}
where we have also used that $\nabla w = (\beta^+ - \beta^-) \nabla(\nabla v^+_h\cdot\bar{\mathbf{ n}})$ is a constant vector.
\end{proof}

Now, we are ready to estimate $\mathcal{C}_T(P_{\omega_T}u^-_E)-P_{\omega_T}u^+_E$ indicated by the dashed line in the Diagram \ref{fig:diagram}.
\begin{lemma}
\label{lem_proj_ext_estimate}
Assume that the mesh $\mathcal{T}_h$ satisfies the conditions stated in Lemma \ref{lem_upm_tau}. Then
there exists a constant $C$ such that for every $u\in PH^2(\omega_T)$ the following holds:
\begin{equation}
\label{proj_ext_estimate_eq0}
 | \mathcal{C}_T(P_{\omega_T}u^-_E)-P_{\omega_T}u^+_E |_{H^k(\omega_T)} \leqslant  C h^{2-k} \sum_{s=\pm}( |u^s_E|_{H^1(\omega_T)} +  |u^s_E|_{H^2(\omega_T)} ), ~k=0,1,2, ~\forall T \in \mathcal{T}_h^i.
\end{equation}
\end{lemma}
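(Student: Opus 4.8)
The plan is to estimate $w := \mathcal{C}_T(P_{\omega_T}u^-_E) - P_{\omega_T}u^+_E$, which is a trilinear polynomial on $\omega_T$, by first controlling it in the functional $\vertiii{\cdot}_{h,\omega_T}$ and then invoking the norm-equivalence Lemma \ref{lem_norm_equi} together with standard inverse inequalities to pass to the $H^k$-seminorms. Since $w \in \mathbb{Q}_1$, Lemma \ref{lem_norm_equi} gives $\|w\|_{L^2(\omega_T)} \leqslant C h^{-1/2}\vertiii{w}_{h,\omega_T}$, and the inverse inequality then yields $|w|_{H^k(\omega_T)} \leqslant C h^{-k}\|w\|_{L^2(\omega_T)} \leqslant C h^{-k-1/2}\vertiii{w}_{h,\omega_T}$ for $k=0,1,2$; so it suffices to prove $\vertiii{w}_{h,\omega_T} \leqslant C h^{5/2}\sum_{s=\pm}(|u^s_E|_{H^1(\omega_T)} + |u^s_E|_{H^2(\omega_T)})$.

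To bound the three terms in $\vertiii{w}_{h,\omega_T}$, I would split each one using $P_{\omega_T}u^s_E$ as an intermediary, as indicated by the diagram in Figure \ref{fig:diagram}. For the $L^2(\tau_{\omega_T})$ term, write $w = (\mathcal{C}_T(P_{\omega_T}u^-_E) - P_{\omega_T}u^-_E) + (P_{\omega_T}u^-_E - P_{\omega_T}u^+_E)$ on $\tau_{\omega_T}$. The second piece, $P_{\omega_T}u^-_E - P_{\omega_T}u^+_E$, is handled by adding and subtracting $u^{\pm}_E$ on $\tau_{\omega_T}$: the standard projection error estimate (in the trace-augmented form of Lemma \ref{lem_trace_inequa_pyd}) controls $\|u^s_E - P_{\omega_T}u^s_E\|_{L^2(\tau_{\omega_T})}$ by $C h^{3/2}(|u^s_E|_{H^1(\omega_T)} + |u^s_E|_{H^2(\omega_T)})$, and $\|u^+_E - u^-_E\|_{L^2(\tau_{\omega_T})}$ is exactly estimate \eqref{upm_tau_eq01} of Lemma \ref{lem_upm_tau}, of order $h^{3/2}|u^s_E|_{H^1} + h^{5/2}|u^s_E|_{H^2}$. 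The first piece is controlled through the explicit formula \eqref{ext_c_express}: $\mathcal{C}_T(p) - p = (\tfrac{\beta^-}{\beta^+}-1)(\nabla p(F)\cdot\bar{\mathbf n})L(X)$; on $\tau_{\omega_T}$ we have $|L(X)| \leqslant C h^2 \kappa$ by the geometric estimate \eqref{interf_element_est_eq0_dist} of Theorem \ref{lem_interf_element_est} (since $\tau_{\omega_T}$ lies within $Ch^2$ of $\Gamma_{\omega_T}$ by that same theorem), and the coefficient $(1-\tfrac{\beta^-}{\beta^+})$ is bounded by $1$; the factor $\nabla(P_{\omega_T}u^-_E)(F)\cdot\bar{\mathbf n}$ is estimated by writing it as $\nabla(P_{\omega_T}u^-_E - u^-_E)(F)\cdot\bar{\mathbf n} + \nabla u^-_E(F)\cdot\bar{\mathbf n}$ and bounding the pointwise values via Lemma \ref{pt_tau} and a trace/projection argument, giving a bound $C h^{-1/2}(|u^-_E|_{H^1(\omega_T)} + h|u^-_E|_{H^2(\omega_T)})$. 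The flux term $h\|\nabla w\cdot\bar{\mathbf n}\|_{L^2(\tau_{\omega_T})}$ is treated analogously: split as $h\|\nabla(\mathcal{C}_T(P_{\omega_T}u^-_E)\cdot\bar{\mathbf n}) - \beta^-/\beta^+ \cdots\|$ — more precisely, use that the IFE function $\mathcal{C}_T(P_{\omega_T}u^-_E)$ on $T^+$ paired with $P_{\omega_T}u^-_E$ on $T^-$ satisfies the approximate flux jump \eqref{ife_shape_fun_2_2} at $F$, invoke Lemma \ref{lem_IFE_tau_est} to get $\|\beta^+\nabla(\mathcal{C}_T P_{\omega_T}u^-_E)\cdot\bar{\mathbf n} - \beta^-\nabla(P_{\omega_T}u^-_E)\cdot\bar{\mathbf n}\|_{L^2(\tau_{\omega_T})} \leqslant C(\beta^+-\beta^-)h^{1/2}|\cdot|_{H^2}$, combine with the true-flux-continuity reflected in \eqref{upm_tau_eq02} and the projection error for the normal derivative, and divide out appropriately by $\beta^+$; the $\beta$-dependence must cancel since $w$ itself does not see $\beta$ except through $\mathcal{C}_T$, so care is needed to ensure all $\beta^{\pm}$ factors are absorbed into generic constants. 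Finally, the Hessian term $h^{3/2}|w|_{H^2(\omega_T)}$: since $w\in\mathbb{Q}_1$, I can use the triangle inequality $|w|_{H^2(\omega_T)} \leqslant |\mathcal{C}_T(P_{\omega_T}u^-_E) - P_{\omega_T}u^-_E|_{H^2} + |P_{\omega_T}u^-_E - P_{\omega_T}u^+_E|_{H^2}$; the first term is $|\,c_0 L(X)\,|_{H^2(\omega_T)}$, and since $L$ is linear its Hessian vanishes, so this term is zero; the second is bounded by projection errors $|u^s_E - P_{\omega_T}u^s_E|_{H^2(\omega_T)} \leqslant |u^s_E|_{H^2(\omega_T)}$.

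Collecting the three contributions, each term of $\vertiii{w}_{h,\omega_T}$ is $\leqslant C h^{3/2}(|u^s_E|_{H^1(\omega_T)} + h|u^s_E|_{H^2(\omega_T)})$ summed over $s=\pm$ — wait, I should double-check the powers: the $L^2(\tau)$ term is $O(h^{3/2}|u|_{H^1}+h^{5/2}|u|_{H^2})$, the flux term carries an extra $h$ so is $O(h\cdot(h^{1/2}|u|_{H^1}+h|u|_{H^2})) = O(h^{3/2}|u|_{H^1}+h^2|u|_{H^2})$, and the Hessian term is $O(h^{3/2}|u|_{H^2})$. The dominant low-order behavior is $h^{3/2}|u|_{H^1}$, so $\vertiii{w}_{h,\omega_T} \leqslant C h^{3/2}\sum_{s=\pm}(|u^s_E|_{H^1(\omega_T)}+|u^s_E|_{H^2(\omega_T)})$, and then $|w|_{H^k(\omega_T)} \leqslant C h^{-k-1/2}\cdot h^{3/2}\sum(\cdots) = C h^{2-k}\sum_{s=\pm}(|u^s_E|_{H^1(\omega_T)}+|u^s_E|_{H^2(\omega_T)})$, which is exactly \eqref{proj_ext_estimate_eq0}.

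I expect the main obstacle to be the flux term: keeping track of the $\beta^{\pm}$ factors so that they genuinely cancel into $\beta$-independent constants is delicate, because Lemma \ref{lem_IFE_tau_est} produces $(\beta^+-\beta^-)$ on the right-hand side while the true solution's flux continuity \eqref{upm_tau_eq02} carries $\beta^s$ weights, and one must divide the difference $\beta^+\nabla w\cdot\bar{\mathbf n}$-type quantity by $\beta^+$ correctly; a secondary subtlety is that $\nabla w\cdot\bar{\mathbf n}$ is not directly a flux jump but the difference of two quantities each of which must be compared to the corresponding true normal derivative, so the triangle-inequality bookkeeping has several intermediate terms. The pointwise-at-$F$ estimates via Lemma \ref{pt_tau} are routine but require that $F\in\tau_{\omega_T}$, which holds since $F$ is the centroid of $K_T\subset\tau(T)$ and $K_T\subset\overline T\subset\omega_T$.
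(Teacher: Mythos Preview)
Your overall strategy---controlling $w$ in the $\vertiii{\cdot}_{h,\omega_T}$ functional via Lemma~\ref{lem_norm_equi}, then passing to $H^k$-seminorms by the inverse inequality---is exactly the paper's approach, and your handling of the flux term (term II) and the Hessian term (term III) is essentially what the paper does.

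The one place you take a detour is term I, and there your argument has a small but real gap. You split $w|_{\tau_{\omega_T}}$ into $\big(\mathcal{C}_T(P_{\omega_T}u^-_E)-P_{\omega_T}u^-_E\big)+\big(P_{\omega_T}u^-_E-P_{\omega_T}u^+_E\big)$ and then try to bound the first piece by estimating $|L(X)|$ on $\tau_{\omega_T}$ via the geometric Theorem~\ref{lem_interf_element_est} and the coefficient $\nabla(P_{\omega_T}u^-_E)(F)\cdot\bar{\mathbf n}$ by a pointwise argument involving $\nabla u^-_E(F)$. But you are overlooking that $\tau_{\omega_T}\subset\tau(T)=\{L=0\}$ by definition, so $L\equiv 0$ there and the first piece vanishes \emph{identically}; this is precisely the continuity condition \eqref{ife_shape_fun_2_1}, and it is how the paper immediately reduces term I to $\|P_{\omega_T}u^-_E-P_{\omega_T}u^+_E\|_{L^2(\tau_{\omega_T})}$. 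Your workaround is not only unnecessary but also technically unsound as written: $u^-_E\in H^2(\omega_T)$ in three dimensions does not embed into $C^1$, so the pointwise value $\nabla u^-_E(F)$ is not defined, and Lemma~\ref{pt_tau} applies only to $\mathbb{Q}_1$ polynomials, not to $\nabla u^-_E$. Once you observe $L|_{\tau_{\omega_T}}=0$, term I goes through cleanly by the triangle inequality, the trace bound \eqref{trace_inequa_pyd_eq0}, and \eqref{upm_tau_eq01}, exactly as you outline for the second piece.

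On term II: your concern about the $\beta^\pm$ bookkeeping is legitimate, and the paper resolves it just as you suggest---multiply $\nabla w\cdot\bar{\mathbf n}$ by $\beta^+$, insert $\beta^-\nabla(P_{\omega_T}u^-_E)\cdot\bar{\mathbf n}$, apply Lemma~\ref{lem_IFE_tau_est} to one piece and \eqref{upm_tau_eq02} (plus projection error) to the other, then divide by $\beta^+$; the ratios $(\beta^+-\beta^-)/\beta^+$ and $\beta^s/\beta^+$ are both at most $1$ under the standing assumption $\beta^+\geqslant\beta^-$, so the constants are $\beta$-free.
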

\begin{proof}
Let $w=\mathcal{C}_T(P_{\omega_T}u^-_E)-P_{\omega_T}u^+_E$. Since $w \in \mathbb{Q}_1$, by Lemma \ref{lem_norm_equi}, we have
\begin{equation}
\label{proj_ext_estimate_eq1}
 Ch^{-1/2}\| w \|_{L^2(\omega_T)} \leqslant \vertiii{w}_{h,T} =  \| w \|_{L^2(\tau_{\omega_T})} +  h \| \nabla w\cdot\bar{\mathbf{ n}} \|_{L^2(\tau_{\omega_T})} +  h^{3/2} | w |_{H^2(\omega_T)} = I + II + III.
\end{equation}
For the term $I$, using the continuity condition on $\tau_{\omega_T}$, i.e., the first equation in \eqref{ife_shape_fun_2_1}, the triangular inequality, the trace inequality \eqref{trace_inequa_pyd_eq0}, and the estimate \eqref{upm_tau_eq01}, we have
\begin{align}
I & = \| P_{\omega_T} u^-_E -  P_{\omega_T}u^+_E \|_{L^2(\tau_{\omega_T})} \leqslant \sum_{s=\pm} \| P_{\omega_T} u^s_E -  u^s_E \|_{L^2(\tau_{\omega_T})} + \| u^+_E - u^-_E \|_{L^2(\tau_{\omega_T})} \nonumber \\
&\leqslant \sum_{s=\pm} ( h^{-1/2} \| P_{\omega_T} u^s_E -  u^s_E \|_{L^2(\omega_T)} + h^{1/2} \| P_{\omega_T} u^s_E -  u^s_E \|_{H^1(\omega_T)}) + \| u^+_E - u^-_E \|_{L^2(\tau_{\omega_T})} \label{proj_ext_estimate_eq2} \\
&  \leqslant C h^{3/2} \sum_{s = \pm} (| u^s_E |_{H^1(\omega_T)} + | u^s_E |_{H^2(\omega_T)}) . \nonumber
\end{align}
For the term $II$, firstly, by Lemma \ref{lem_IFE_tau_est} and the assumption $\beta^+\geqslant\beta^-$, we have
\begin{equation}
\label{proj_ext_estimate_eq3}
(\beta^+)^{-1} \| \beta^+ \nabla \mathcal{C}_T (P_{\omega_T} u^-_E)\cdot\bar{\mathbf{ n}} -  \beta^- \nabla P_{\omega_T} u^-_E \cdot\bar{\mathbf{ n}}\|_{L^2(\tau_{\omega_T})}   \leqslant  C h^{1/2} |P_{\omega_T} u^+_E|_{H^2(\omega_T)} \leqslant Ch^{1/2} |u^+_E|_{H^2(\omega_T)}.
\end{equation}
Secondly, using an argument similar to \eqref{proj_ext_estimate_eq2} with \eqref{upm_tau_eq02} and trace inequality \eqref{trace_inequa_pyd_eq0}, we obtain
\begin{align}
& (\beta^+)^{-1} \| \beta^- \nabla P_{\omega_T} u^-_E\cdot\bar{\mathbf{ n}} - \beta^+ \nabla P_{\omega_T} u^+_E \cdot\bar{\mathbf{ n}} \|_{L^2(\tau_{\omega_T})} \nonumber \\
 \leqslant  & (\beta^+)^{-1} \left( \sum_{s=\pm} \| \beta^s \nabla P_{\omega_T} u^s_E\cdot\bar{\mathbf{ n}} - \beta^s \nabla u^s_E \cdot\bar{\mathbf{ n}} \|_{L^2(\tau_{\omega_T})} + \| \beta^- \nabla u^-_E\cdot\bar{\mathbf{ n}} - \beta^+ \nabla u^+_E \cdot\bar{\mathbf{ n}} \|_{L^2(\tau_{\omega_T})} \right) \nonumber \\
 \leqslant & C h^{1/2} \sum_{s=\pm}   (| u^s_E |_{H^1(\omega_T)} +  | u^s_E |_{H^2(\omega_T)} ). \label{proj_ext_estimate_eq4}
\end{align}
Then, by triangular inequality together with \eqref{proj_ext_estimate_eq3} and \eqref{proj_ext_estimate_eq4}, we arrive at
\begin{align}
II \leqslant &h(\beta^+)^{-1} \left( \| \beta^+ \nabla \mathcal{C}_T (P_{\omega_T} u^-_E)\cdot\bar{\mathbf{ n}} -  \beta^- \nabla P_{\omega_T} u^-_E \cdot\bar{\mathbf{ n}}\|_{L^2(\tau_{\omega_T})}  + \| \beta^- \nabla P_{\omega_T} u^-_E\cdot\bar{\mathbf{ n}} - \beta^+ \nabla P_{\omega_T} u^+_E \cdot\bar{\mathbf{ n}} \|_{L^2(\tau_{\omega_T})} \right) \nonumber \\
\leqslant & C h^{3/2} \sum_{s=\pm}  ( | u^s_E |_{H^1(\omega_T)} +  | u^s_E |_{H^2(\omega_T)} ). \label{proj_ext_estimate_eq5}
\end{align}
For the term $III$, using the second condition in \eqref{ife_shape_fun_2_1} and noting that the second derivative of $\mathcal{C}_T (P_{\omega_T} u^-_E)\in\mathbb{Q}_1$ depends only on the coefficient of $xy+xz+yz$, we have
\begin{equation}
\label{proj_ext_estimate_eq6}
III =  h^{3/2} | P_{\omega_T} u^-_E -  P_{\omega_T}u^+_E |_{H^2(\omega_T)} \leqslant C  h^{3/2}( |u^-_E|_{H^2(\omega_T)} + |u^+_E|_{H^2(\omega_T)} ).
\end{equation}
Finally, putting \eqref{proj_ext_estimate_eq2}, \eqref{proj_ext_estimate_eq5}, and \eqref{proj_ext_estimate_eq6} into \eqref{proj_ext_estimate_eq1}, we obtain \eqref{proj_ext_estimate_eq0} for $\|\mathcal{C}_T(P_{\omega_T}u^-_E)-P_{\omega_T}u^+_E\|_{L^2(\omega_T)}$.
The results for $|\mathcal{C}_T(P_{\omega_T}u^-_E)-P_{\omega_T}u^+_E|_{H^k(\omega_T)}$, $k=1, 2$, follow from the standard inverse inequality for polynomials.
\end{proof}
Then, we have the approximation capabilities of the proposed local IFE space \eqref{loc_IFE_space} on interface elements.
\begin{thm}
\label{lem_Jh_est}
Assume that the mesh $\mathcal{T}_h$ satisfies the conditions stated in Lemma \ref{lem_upm_tau}. Then
there exists a constant $C$ such that for every $u\in PH^2(\omega_T)$ the following holds:
\begin{equation}
\label{Jh_est_eq0}
| J^t_{h,T} u  - u^t_E |_{H^k(\omega_T)} \leqslant C h^{2-k} \sum_{s=\pm}( |u^s_E|_{H^1(\omega_T)} +  |u^s_E|_{H^2(\omega_T)} ),
~~ k=0,1,2, ~~ t=\pm, \forall T \in \mathcal{T}_h^i.
\end{equation}
\end{thm}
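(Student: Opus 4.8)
The plan is to read the result straight off the diagram in Figure~\ref{fig:diagram}: the only non-trivial leg, the dashed arrow $\mathcal{C}_T(P_{\omega_T}u^-_E)-P_{\omega_T}u^+_E$, has already been estimated in Lemma~\ref{lem_proj_ext_estimate}, so all that remains is to combine it with the standard $\mathbb{Q}_1$-projection error on the patch $\omega_T$ by means of the triangle inequality. I would organize the argument by the two values of $t$.

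First I would treat the case $t=-$, which is immediate: by \eqref{Jh_def} we have $J^-_{h,T}u-u^-_E=P_{\omega_T}u^-_E-u^-_E$, and since $\omega_T$ is a union of a fixed (mesh-size-independent) number of cubes of side $h$ and $P_{\omega_T}$ is the standard trilinear projection, the Bramble--Hilbert lemma gives
\begin{equation*}
| P_{\omega_T}u^-_E-u^-_E |_{H^k(\omega_T)} \leqslant C h^{2-k}\, | u^-_E |_{H^2(\omega_T)}, \qquad k=0,1,2,
\end{equation*}
which is even stronger than \eqref{Jh_est_eq0}.

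Next, for $t=+$ I would insert $P_{\omega_T}u^+_E$ and split
\begin{equation*}
| J^+_{h,T}u-u^+_E |_{H^k(\omega_T)} \leqslant | \mathcal{C}_T(P_{\omega_T}u^-_E)-P_{\omega_T}u^+_E |_{H^k(\omega_T)} + | P_{\omega_T}u^+_E-u^+_E |_{H^k(\omega_T)}.
\end{equation*}
The first term on the right is controlled directly by Lemma~\ref{lem_proj_ext_estimate}, which already provides the bound $C h^{2-k}\sum_{s=\pm}(|u^s_E|_{H^1(\omega_T)}+|u^s_E|_{H^2(\omega_T)})$; the second term is again a standard projection error, bounded by $C h^{2-k}|u^+_E|_{H^2(\omega_T)}$. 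Adding the two estimates yields \eqref{Jh_est_eq0} for $t=+$ as well.

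There is essentially no obstacle left at this level: the real work of the approximation analysis is hidden in Lemma~\ref{lem_proj_ext_estimate} and, through it, in the geometric estimates of Section~\ref{sec:geometry} (the maximal-angle bound and the $\mathcal{O}(\kappa h^2)$/$\mathcal{O}(\kappa h)$ control of $\tau(T)$ versus $\Gamma$) together with Lemma~\ref{lem_upm_tau}. The only point that requires a little care is the bookkeeping of constants: they must stay independent of the interface position, of $h$, and of $\beta^{\pm}$, which is automatic here because Lemma~\ref{lem_proj_ext_estimate} already delivers such a constant and the remaining projection estimates depend only on the fixed shape of the patch $\omega_T$.
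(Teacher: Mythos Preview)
Your proposal is correct and follows essentially the same route as the paper: treat $t=-$ directly via the standard $\mathbb{Q}_1$ projection error $P_{\omega_T}u^-_E-u^-_E$, and for $t=+$ insert $P_{\omega_T}u^+_E$, splitting into the dashed-arrow term handled by Lemma~\ref{lem_proj_ext_estimate} plus another standard projection error. The paper's proof is exactly this argument, stated in two sentences.
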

\begin{proof}
The argument is outlined by the diagram in Figure \ref{fig:diagram}. For the case when $t = +$,
we note that $J^-_{h,T} u  - u^-_E=P_{\omega_T}u^-_E - u^-_E$. Hence,
estimate \eqref{Jh_est_eq0} follows directly from the approximation property of the standard projection operator $P_{\omega_T}$ to $\mathbb{Q}_1$. For the case when $t = +$, we note $J^+_{h,T} u  - u^+_E = (\mathcal{C}_T(P_{\omega_T}u^-_E) - P_{\omega_T}u^+_E) + (P_{\omega_T}u^+_E - u^+_E)$. Thus, \eqref{Jh_est_eq0} follows from Lemma \ref{lem_proj_ext_estimate} and the approximation property of the projection operator $P_{\omega_T}$.
\end{proof}
\commentout{
A direct consequence of Theorem \ref{lem_Jh_est} is
\begin{equation}
\label{Jh_interp_est}
| J_{h,T} u  - u |_{H^k(\omega_T)} \leqslant C h^{2-k} \sum_{s=\pm}( |u^s_E|_{H^1(\omega_T)} +  |u^s_E|_{H^2(\omega_T)} ),
~~ k=0,1,2, ~~ t=\pm, \forall T \in \mathcal{T}_h^i.
\end{equation}
}



The result in Theorem \ref{lem_Jh_est} can be also used to analyze the Lagrange type interpolation, i.e., on every interface
element $T$, we define $I_{h,T}~:~PH^2(\omega_T)\rightarrow S_h(T)$ such that
\begin{equation}
\label{lagrange_interp}
I_{h,T} u = \sum_{i\in\mathcal{I}} u(A_i)\phi_{i,T},
\end{equation}
where $\phi_{i,T}$ are the IFE shape functions determined by \eqref{IFE_shape_fun_delta} and $A_i$, $i\in\mathcal{I}$ are vertices of $T$. Again, functions in $S_h(T)$ are understood as piecewise functions whose component polynomials can be used on the whole patch $\omega_T$. First, we show that the IFE shape functions have bounds similar to those of the $\mathbb{Q}_1$ finite element shape functions on $T$. We also recall the assumption that $\beta^+\geqslant\beta^-$.
\begin{thm}[Bounds of IFE shape functions]
\label{bounds_IFEshapeFun}
Let $\mathcal{T}_h$ be a mesh satisfying the \textbf{Assumptions} \textbf{(H1)}-\textbf{(H3)}. Then, there exists a constant $C$ independent of the interface location, mesh size $h$, and coefficients $\beta^{\pm}$ such that
\begin{equation}
\label{bounds_IFEshapeFun_eq01}
| \phi_{i,T} |_{k,\infty,\omega_T^+} \leqslant C h^{-k} ~~~ \text{and} ~~~ | \phi_{i,T} |_{k,\infty,\omega_T^-} \leqslant C\frac{\beta^+}{\beta^-} h^{-k}, ~~~ k=0,1, 2 ~~i\in\mathcal{I}, ~~\forall T \in \mathcal{T}_h^i.
\end{equation}
\end{thm}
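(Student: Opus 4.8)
The plan is to reduce the statement to the explicit representation of the Lagrange IFE shape functions. Recall from \eqref{ife_shape_fun_4} that, with $\mathbf{v}=\mathbf{e}_i$, the shape function is $\phi_{i,T}=\phi^+_{i,T}$ on $T^+$ with $\phi^+_{i,T}=\sum_{j\in\mathcal{I}^-}c_j\psi_{j,T}+\sum_{j\in\mathcal{I}^+}v_j\psi_{j,T}$, and $\phi_{i,T}=\phi^-_{i,T}=\phi^+_{i,T}+c_0L$ on $T^-$, where $L(X)=(X-D_1)\cdot\bar{\mathbf{n}}$ and the coefficients $\mathbf{c}=(c_j)_{j\in\mathcal{I}^-}$ and $c_0$ are given in closed form by \eqref{c_c0_formula}. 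Since the $\psi_{j,T}$ are the standard $\mathbb{Q}_1$ nodal basis functions, a scaling argument on the reference configuration together with norm equivalence on $\mathbb{Q}_1$ gives $|\psi_{j,T}|_{k,\infty,\omega_T}\leqslant Ch^{-k}$ for $k=0,1,2$ on the whole patch (the ratio of the sizes of $\omega_T$ and $T$ being bounded because $T$ is the center element of $\omega_T$). Hence everything reduces to bounding $|c_j|$ uniformly in $\beta^{\pm}$ for the $T^+$ estimate, and $|c_0|$ together with $|L|_{k,\infty,\omega_T}$ for the $T^-$ estimate.

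First I would bound the quantities entering \eqref{c_c0_formula}. With $\mathbf{v}=\mathbf{e}_i$ one has $\Xi=\nabla\psi_{i,T}(F)\cdot\bar{\mathbf{n}}$, so $|\Xi|\leqslant Ch^{-1}$ because $F\in T$ and $|\bar{\mathbf{n}}|=1$; and each entry of $\bfdelta$ satisfies $|L(A_j)|\leqslant|A_j-D_1|\leqslant\sqrt{3}\,h$, so $\|\bfdelta\|_\infty\leqslant\sqrt{3}\,h$. For the $T^+$ estimate the crucial point is that $|c_j|$ must be bounded \emph{independently} of $\beta^{\pm}$: from \eqref{c_c0_formula} we have $|c_j|\leqslant|v_j|+|\Xi|\,\dfrac{\mu\|\bfdelta\|_\infty}{1+\mu\bfgamma^T\bfdelta}$, and here I would invoke Lemma \ref{lemma_gammadelta_01}; the sharpened inequality \eqref{lemma_gammadelta_01_eq1} gives $\|\bfdelta\|_\infty\leqslant 7.43\,h\,\bfgamma^T\bfdelta$, whence $\dfrac{\mu\|\bfdelta\|_\infty}{1+\mu\bfgamma^T\bfdelta}\leqslant 7.43\,h\,\dfrac{\mu\bfgamma^T\bfdelta}{1+\mu\bfgamma^T\bfdelta}\leqslant 7.43\,h$, using $\mu\geqslant 0$, $\bfgamma^T\bfdelta\geqslant 0$ and $t/(1+t)\leqslant 1$. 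Therefore $|c_j|\leqslant 1+Ch^{-1}\cdot 7.43\,h\leqslant C$ with $C$ independent of $\beta^{\pm}$, and then by the triangle inequality for seminorms $|\phi_{i,T}|_{k,\infty,\omega_T^+}\leqslant|\phi^+_{i,T}|_{k,\infty,\omega_T}\leqslant\big(\sum_{j\in\mathcal{I}^-}|c_j|+\sum_{j\in\mathcal{I}^+}|v_j|\big)\max_j|\psi_{j,T}|_{k,\infty,\omega_T}\leqslant Ch^{-k}$, which is the first estimate in \eqref{bounds_IFEshapeFun_eq01}.

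Next I would treat $T^-$. By \eqref{c_c0_formula} and \eqref{unisolv_IFE_shape_eq1}, $|c_0|=\dfrac{\mu|\Xi|}{1+\mu\bfgamma^T\bfdelta}\leqslant\mu|\Xi|\leqslant C\dfrac{\beta^+}{\beta^-}h^{-1}$, using $\mu=\frac{\beta^+}{\beta^-}-1\leqslant\frac{\beta^+}{\beta^-}$ and $1+\mu\bfgamma^T\bfdelta\geqslant 1$. The linear function $L$ obeys $\|L\|_{0,\infty,\omega_T}\leqslant\sup_{X\in\omega_T}|X-D_1|\leqslant Ch$, $|L|_{1,\infty,\omega_T}=|\bar{\mathbf{n}}|=1$ and $|L|_{2,\infty,\omega_T}=0$, whence $|c_0L|_{k,\infty,\omega_T}\leqslant C\frac{\beta^+}{\beta^-}h^{-k}$ for $k=0,1,2$. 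Combining this with the bound already obtained for $\phi^+_{i,T}$ through $\phi^-_{i,T}=\phi^+_{i,T}+c_0L$ and using $\frac{\beta^+}{\beta^-}\geqslant 1$ yields $|\phi_{i,T}|_{k,\infty,\omega_T^-}\leqslant Ch^{-k}+C\frac{\beta^+}{\beta^-}h^{-k}\leqslant C\frac{\beta^+}{\beta^-}h^{-k}$, the second estimate in \eqref{bounds_IFEshapeFun_eq01}.

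The only step where something genuinely delicate happens is the uniform-in-$\beta$ bound for $c_j$ on $T^+$: a crude estimate gives only $|c_j|\leqslant 1+\mu|\Xi|\|\bfdelta\|_\infty\leqslant 1+C\mu$, which degrades as $\beta^+/\beta^-\to\infty$, and it is precisely the geometric inequality \eqref{lemma_gammadelta_01_eq1} of Lemma \ref{lemma_gammadelta_01}, asserting that $\|\bfdelta\|_\infty$ is controlled not merely by $h$ but by $h\,\bfgamma^T\bfdelta$, together with the elementary bound $\mu t/(1+\mu t)\leqslant 1$, that removes the $\beta$-dependence. A secondary technical point is the transfer of the $\mathbb{Q}_1$ seminorm bounds from $T$ to the larger patch $\omega_T$; this is routine once one notes that $T$ sits at the center of $\omega_T$, so the passage costs only a fixed multiplicative constant.
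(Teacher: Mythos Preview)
Your proof is correct and follows essentially the same route as the paper: both reduce to bounding the coefficients $\mathbf{c}$ and $c_0$ in the explicit formulas \eqref{c_c0_formula}, both invoke the sharpened geometric inequality \eqref{lemma_gammadelta_01_eq1} of Lemma \ref{lemma_gammadelta_01} together with $\mu t/(1+\mu t)\leqslant 1$ to obtain the $\beta$-independent bound on $\|\mathbf{c}\|_\infty$, and both bound $c_0$ directly via $1+\mu\bfgamma^T\bfdelta\geqslant 1$ for the $T^-$ estimate. Your write-up is in fact a bit more explicit than the paper's about the role of \eqref{lemma_gammadelta_01_eq1} and about the passage from $T$ to $\omega_T$, but the argument is the same.
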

\begin{proof}
Since $|\psi_{i,T}|_{k,\infty,\omega_T}\leqslant Ch^{-k}$, $k=0,1,2$, we only need to estimate the coefficients $\mathbf{ c}$ and $c_0$ in \eqref{ife_shape_fun_4}. Let $\mathbf{ e}=(e_i)_{i\in\mathcal{I}}^T$ be one of the unit vectors in $\mathbb{R}^8$, i.e., $(1,0,\cdots,0)$, $(0,1,\cdots,0)$, $\cdots$, $(0,0,\cdots,1)$, and $\mathbf{ e}^-=(e_i)_{i\in\mathcal{I}^-}$. Then, under the notations of \eqref{c_c0_formula} with $\mathbf{ v}=\mathbf{ e}$, the fact $| \Xi |\leqslant Ch^{-1}$ and Lemma \ref{lemma_gammadelta_01} imply
\begin{equation}
\label{bounds_IFEshapeFun_eq2}
\| \mathbf{ c} \|_{\infty} \leqslant \| \mathbf{ e}^- \|_{\infty} +  \frac{ |\mu| |\Xi| \| \bfdelta \|_{\infty} }{ 1 + \mu \bfgamma^T \bfdelta }
\leqslant C + \frac{C \mu \bfgamma^T\bfdelta }{1 + \mu \bfgamma^T\bfdelta} \leqslant C,
\end{equation}
where we have used $\mu\geqslant0$ because $\beta^+\geqslant\beta^-$. Similarly, by \eqref{c_c0_formula} and $\| L \|_{k,\infty,\omega_T}\leqslant Ch^{1-k}$, we have
\begin{equation}
\label{bounds_IFEshapeFun_eq3}
\| c_0L \|_{k,\infty,\omega_T^{-}} \leqslant \frac{Ch^{-1} \mu \|L\|_{k,\infty,\omega_T^-} }{1 + \mu \bfgamma^T\bfdelta} \leqslant C\mu h^{-k} \leqslant C \frac{\beta^+}{\beta^-} h^{-k} , ~~~ k=0,1,2.
\end{equation}
where \eqref{bounds_IFEshapeFun_eq3} is trivial for $k=2$, since $L$ is linear. Putting \eqref{bounds_IFEshapeFun_eq2} and \eqref{bounds_IFEshapeFun_eq3} into \eqref{ife_shape_fun_4}, we have \eqref{bounds_IFEshapeFun_eq01}.
\end{proof}

\begin{thm}
\label{thm_Ih_ext}
Assume that the mesh $\mathcal{T}_h$ satisfies the conditions stated in Lemma \ref{lem_upm_tau}.
Then there exists a constant $C$ such that for every $u\in PH^2(\omega_T)$ the following holds:
\begin{equation}
\label{Ih_est_eq0}
| I_{h,T} u  - u |_{H^k(\omega_T)} \leqslant C \frac{\beta^+}{\beta^-} h^{2-k} \sum_{s=\pm} ( | u^s_E |_{H^1(\omega_T)} +  | u^s_E |_{H^2(\omega_T)} ), ~~k=0,1,2, ~~\forall T \in \mathcal{T}_h^i.
\end{equation}
\end{thm}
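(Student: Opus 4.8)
The plan is to reduce the statement to the optimality of $J_{h,T}$ already established in Theorem~\ref{lem_Jh_est}, together with the pointwise shape-function bounds of Theorem~\ref{bounds_IFEshapeFun}. The starting observation is that $J_{h,T}u$ also lies in the $8$-dimensional space $S_h(T)$, so by the unisolvence of Theorem~\ref{unisolv_IFE_shape} it equals $\sum_{i\in\mathcal{I}}(J_{h,T}u)(A_i)\,\phi_{i,T}$; subtracting this from $I_{h,T}u=\sum_{i\in\mathcal{I}}u(A_i)\,\phi_{i,T}$ gives
\[
I_{h,T}u - J_{h,T}u \;=\; \sum_{i\in\mathcal{I}}\big(u(A_i)-(J_{h,T}u)(A_i)\big)\phi_{i,T},
\]
so that, by the triangle inequality,
\[
|I_{h,T}u-u|_{H^k(\omega_T)} \;\le\; \sum_{i\in\mathcal{I}}\big|u(A_i)-(J_{h,T}u)(A_i)\big|\,|\phi_{i,T}|_{H^k(\omega_T)} \;+\; |J_{h,T}u-u|_{H^k(\omega_T)} .
\]
The last term is handled directly by Theorem~\ref{lem_Jh_est}: on $\omega_T^s$ the Sobolev extension $u^s_E$ coincides with $u^s$, so $|J_{h,T}u-u|_{H^k(\omega_T)}\le C h^{2-k}\sum_{s=\pm}(|u^s_E|_{H^1(\omega_T)}+|u^s_E|_{H^2(\omega_T)})$. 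The work therefore reduces to the nodal discrepancies and the shape-function seminorms.

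For the nodal discrepancies, fix a vertex $A_i$ and let $s\in\{+,-\}$ be the side with $A_i\in\overline{T^s}$; then $u(A_i)-(J_{h,T}u)(A_i)=w_i(A_i)$, where $w_i:=u^s_E-J^s_{h,T}u\in H^2(\omega_T)$, and Theorem~\ref{lem_Jh_est} yields $|w_i|_{H^j(\omega_T)}\le C h^{2-j}\sum_{s=\pm}(|u^s_E|_{H^1(\omega_T)}+|u^s_E|_{H^2(\omega_T)})$ for $j=0,1,2$. Since $\omega_T$ is a shape-regular union of cubes of size comparable to $h$ in $\mathbb{R}^3$, a scaled Sobolev embedding $H^2(\omega_T)\hookrightarrow L^\infty(\omega_T)$ of the form $\|w_i\|_{L^\infty(\omega_T)}\le C\sum_{j=0}^2 h^{j-3/2}|w_i|_{H^j(\omega_T)}$ gives
\[
\big|u(A_i)-(J_{h,T}u)(A_i)\big| \;\le\; C\,h^{1/2}\sum_{s=\pm}\big(|u^s_E|_{H^1(\omega_T)}+|u^s_E|_{H^2(\omega_T)}\big).
\]
For the shape functions, the pointwise bounds of Theorem~\ref{bounds_IFEshapeFun} together with $|\omega_T^s|\le C h^3$ give $|\phi_{i,T}|_{H^k(\omega_T^+)}\le C h^{3/2-k}$ and $|\phi_{i,T}|_{H^k(\omega_T^-)}\le C\frac{\beta^+}{\beta^-}h^{3/2-k}$, hence $|\phi_{i,T}|_{H^k(\omega_T)}\le C\frac{\beta^+}{\beta^-}h^{3/2-k}$.

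Substituting these two estimates into the sum over the eight vertices produces $|I_{h,T}u-J_{h,T}u|_{H^k(\omega_T)}\le C\frac{\beta^+}{\beta^-}h^{2-k}\sum_{s=\pm}(|u^s_E|_{H^1(\omega_T)}+|u^s_E|_{H^2(\omega_T)})$, and combining this with the bound on $|J_{h,T}u-u|_{H^k(\omega_T)}$ and $\beta^+/\beta^-\ge 1$ yields \eqref{Ih_est_eq0}. I expect the only genuinely delicate point to be the handling of the nodal values: one must know a priori that point evaluation is legitimate for the $H^2$ function $w_i$, which is exactly where the $3$-D embedding $H^2\hookrightarrow C^0$ enters, and then one must track the powers of $h$ carefully so that the $h^{1/2}$ gained at the nodes absorbs precisely the extra power of $h^{-1}$ hidden in $|\phi_{i,T}|_{H^k(\omega_T)}$ relative to an optimal interpolation bound. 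The factor $\beta^+/\beta^-$ is not an artifact of the argument: it enters solely through the bound for $\phi_{i,T}$ on the minus side in Theorem~\ref{bounds_IFEshapeFun}, reflecting that the IFE shape functions are built by extension from $T^+$ under the standing assumption $\beta^+\ge\beta^-$.
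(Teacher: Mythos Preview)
Your proof is correct and follows essentially the same route as the paper's: split $I_{h,T}u-u$ via $J_{h,T}u$ using $I_{h,T}J_{h,T}u=J_{h,T}u$, then control $I_{h,T}(u-J_{h,T}u)=\sum_i (u(A_i)-(J_{h,T}u)(A_i))\phi_{i,T}$ by combining the scaled Sobolev embedding $H^2(\omega_T)\hookrightarrow L^\infty(\omega_T)$ with the shape-function bounds of Theorem~\ref{bounds_IFEshapeFun} and the $J_{h,T}$ estimates of Theorem~\ref{lem_Jh_est}. The paper merely compresses your two intermediate factors $h^{1/2}$ and $h^{3/2-k}$ into the single display $|I_{h,T}w|_{H^k(\omega_T)}\le C\frac{\beta^+}{\beta^-}\sum_{j=0}^2 h^{j-k}(|w^-|_{H^j(\omega_T)}+|w^+|_{H^j(\omega_T)})$; the underlying ingredients are identical.
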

\begin{proof}
Let $w=J_{h,T}u - u$ and $w^s=J^s_{h,T}u - u^s_E$, $s=\pm$. Note that each $w^s$ can be considered as a function on the whole patch $\omega_T$. Using the fact $I_{h,T}J_{h,T}u=J_{h,T}u$ and the triangular inequality, we have
\begin{equation}
\label{Ih_ext_eq1}
| I_{h,T}u - u |_{H^{k}(\omega_T)} \leqslant | I_{h,T}u - J_{h,T}u |_{H^{k}(\omega_T)} + | J_{h,T}u - u |_{H^{k}(\omega_T)} \leqslant |I_{h,T}w|_{H^k(\omega_T)} + |w|_{H^k(\omega_T)}.
\end{equation}
Then, by Theorem \ref{bounds_IFEshapeFun}, the Sobolev imbedding Theorem and the scaling argument, we have
\begin{equation}
\begin{split}
\label{Ih_ext_eq2}
| I_{h,T}w|_{H^k(\omega_T)} \leqslant C \frac{\beta^+}{\beta^-}  \sum_{j=0}^2 h^{j-k}( |w^-|_{H^j(\omega_T)} + |w^+|_{H^j(\omega_T)} ), ~~~ k=0,1,2. \\
\end{split}
\end{equation}
Therefore, \eqref{Ih_est_eq0} follows from \eqref{Ih_ext_eq1} and \eqref{Ih_ext_eq2} together with the approximation results \eqref{Jh_est_eq0}.
\end{proof}

Since each interface element $T$ is a subset of its patch $\omega_T$, estimates established in Theorem \ref{lem_Jh_est} and
Theorem \ref{thm_Ih_ext} imply that $J_{h, T}u$ and $I_{h, T}u$ can approximate $u \in PH^2(\omega_T)$ optimally
with respect to the underlying polynomials space $\mathbb{Q}_1$. As usual, these local optimal approximation capability further imply
the optimal approximation capability of the global IFE space $S_h(\Omega)$ defined on the whole $\Omega$. We can see this from
the global IFE interpolation operator $I_h : PH^2(\Omega)\rightarrow S_h(\Omega)$ defined piecewisely such that $I_hu|_T=I_{h,T}u$, in which $I_{h,T}$ is given by \eqref{lagrange_interp} on $T\in\mathcal{T}^i_h$ and the standard Lagrange interpolation on $T\in\mathcal{T}^n_h$. Then, applying the standard estimation results for the Lagrange interpolation \cite{2008BrennerScott} on each non-interface element, summing these estimates and \eqref{Ih_est_eq0} over all the elements, and using the finite overlapping property of the patches as well as the extension boundedness \eqref{sobolev_ext}, we have
\begin{equation}
\label{glob_Ih_est}
h^k | I_h u - u |_{H^k(\Omega)} \leqslant C \frac{\beta^+}{\beta^-}h^2 |u|_{H^2(\Omega)}, ~~~ k=0,1,2,
\end{equation}
which means that $I_h u$ is an optimal approximation of $u \in PH^2(\Omega)$.

\commentout{
Since each interface element $T$ is a subset of its patch $\omega_T$, estimates established in Theorem \ref{lem_Jh_est} and
Theorem \ref{thm_Ih_ext} imply that $J_{h, T}u$ and $I_{h, T}u$ can approximate $u \in PH^2(\omega_T)$ optimally
with respect to the underlying polynomials space $\mathbb{Q}_h$. As usual, these local optimal approximation capability further imply
the optimal approximation capability of the IFE space $S_h(\Omega)$ global defined on the whole $\Omega$. {\color{red} By the discussion above, we can demonstrate this for two different interpolations on $S_h(\Omega)$ defined according to \eqref{glob_IFE_space_DG} or \eqref{glob_IFE_space}. For the global space \eqref{glob_IFE_space_DG}, we take the global IFE interpolation operator $J_h : PH^2(\Omega)\rightarrow S_h(\Omega)$ defined elementwisely such that $J_hu|_T=J_{h,T}u$, in which $J_{h,T}$ is given by \eqref{Jh_def} on $T\in\mathcal{T}^i_h$ and the standard Lagrange interpolation on $T\in\mathcal{T}^n_h$. Similarly, for \eqref{loc_IFE_space}, we define $I_h : PH^2(\Omega)\rightarrow S_h(\Omega)$ such that $I_hu|_T=I_{h,T}u$, in which $I_{h,T}$ is given by \eqref{lagrange_interp} on $T\in\mathcal{T}^i_h$ and the standard Lagrange interpolation on $T\in\mathcal{T}^n_h$. Then, applying the standard estimation results for the Lagrange interpolation \cite{2008BrennerScott} on each non-interface element, summing these estimates and \eqref{Jh_interp_est} or \eqref{Ih_est_eq0} over all the elements, and using the finite overlapping property of the patches as well as the extension boundedness \eqref{sobolev_ext}, we have
\begin{equation}
\label{glob_Jh_est}
h^k | J_h u - u |_{H^k(\Omega)} \leqslant C h^2 \| u \|_{H^2(\Omega)}, ~~~ k=0,1,2.
\end{equation}
\begin{equation}
\label{glob_Ih_est}
h^k | I_h u - u |_{H^k(\Omega)} \leqslant C \frac{\beta^+}{\beta^-}h^2 \| u \|_{H^2(\Omega)}, ~~~ k=0,1,2.
\end{equation}
This means that both the types of global IFE space have optimal approximation capabilities of $u \in PH^2(\Omega)$. We note that the global IFE space defined in a DG manner \eqref{glob_IFE_space_DG} has the optimal approximation capabilities independent of the discontinuous coefficients $\beta^{\pm}$ which may be advantageous for the large jump case, while the global IFE space \eqref{glob_IFE_space} is isomorphic to the standard trilinear finite element space defined on the same mesh, in terms of the number and the location of their global degrees of freedom, independent of the interface shape and location, which may be useful in solving moving interface problems. }
}

\commentout{
\begin{rem}
The special interpolation operator $J_{h,T}$ has the feature that the related approximation capabilities are independent of the coefficient $\beta^{\pm}$. We think this feature together with a global IFE space discontinuous around the interface \cite{2015GuzmanSanchezSarkisP1} may be useful in the analysis of a discontinuous Galerkin scheme for solving high-contrast interface problems. Here, the standard Lagrange interpolation $I_{h,T}$ and the related global interpolation $I_h$ enable us to analyze the approximation capabilities of the proposed global IFE space $S_h(\Omega)$ in \eqref{glob_IFE_space} with the degrees of freedom independent of the interface. As described before, we believe this space together with the PPIFE scheme \eqref{ppife_1}-\eqref{ppife} may have advantages in solving moving interface problems. The analysis of these schemes will be discussed in a forthcoming article.
\end{rem}
}





\section{Numerical Examples}
\label{sec:num_examp}

In this section, we present a group of numerical examples to demonstrate features of the proposed IFE space. Since we have known that the proposed IFE space has an optimal approximation capability, we naturally expect that it can be used to solve the
interface problem \eqref{model}. For this purpose, we consider the extension of the partially penalized immersed finite element (PPIFE)
method \cite{2015LinLinZhang} for 2-D interface problems to the proposed 3-D IFE space. To describe this method,
\commentout{
we let
\begin{equation}
\begin{split}
\label{underline_spa}
V_h(\Omega) = &\left\{ v\in L^2(\Omega)~:~ v\in H^2(T)~\forall T\in\mathcal{T}^n_h, ~ v\in H^2(T^{\pm})~\forall T\in\mathcal{T}^i_h; \right. \\
& \hspace{0.2in} \left. \text{and} ~ v ~ \text{is continuous at each} ~ X\in\mathcal{N}_h ~ \text{and each} ~ e\in\mathcal{E}^n_h \right\},
\end{split}
\end{equation}
}
we use the IFE space $S_h(\Omega)$ defined in \eqref{glob_IFE_space} and consider a bilinear form $a:S_h(\Omega)\times S_h(\Omega)\rightarrow\mathbb{R}$ such that
\begin{equation}
\begin{split}
\label{ppife_1}
a_h(u,v) =& \sum_{T\in\mathcal{T}_h} \int_T \beta \nabla u \cdot \nabla v dX \\
 - & \sum_{F\in\mathring{\mathcal{F}}_h} \int_F \{ \beta \nabla u\cdot \mathbf{ n} \} [v] ds  + \epsilon \sum_{F\in\mathring{\mathcal{F}}_h} \int_F \{ \beta \nabla v\cdot \mathbf{ n} \} [u] ds + \sum_{F\in\mathring{\mathcal{F}}_h} \frac{\sigma^0}{|F|} \int_F [u]\,[v] ds,\\
\end{split}
\end{equation}
and a linear form $L: S_h(\Omega)\rightarrow \mathbb{R}$ such that 
\begin{equation}
\label{ppife_2}
L(v) = \int_{\Omega} f v dX. 
\end{equation}
Then, the PPIFE method for solving the interface problem \eqref{model} is to find $u_h\in S_h(\Omega)$ such that
\begin{equation}
\label{ppife}
a_h(u_h,v_h) = L(v_h), ~~~ \forall v_h\in S_{h, 0}(\Omega),
\end{equation}
where $S_{h, 0}(\Omega)$ is the subspace of $S_h(\Omega)$ formed by IFE functions with zero trace on $\partial \Omega$, and we tacitly
assume that the interface surface $\Gamma$ does not tough $\partial \Omega$. The terms on interface faces in \eqref{ppife_1} are similar to interior penalties in DG methods \cite{1982Arnold,1976DouglasDupont}; hence, we call this method described by \eqref{ppife_1}-\eqref{ppife} symmetric, non-symmetric and incomplete PPIFE (SPPIFE, NPPIFE and IPPIFE) methods when $\epsilon=-1,1$ and $0$, respectively.

Numerical results to be reported are generated by applying the PPIFE method to three interface problems posed in the domain
$\Omega = (-1, 1)^3$ whose interface surfaces have three representative geometries as shown in Figures \ref{fig:sphere}-\ref{fig:torus_sphere}, respectively. We only present numerical results of the SPPIFE method because our extensive numerical experiments suggest that the NPPIFE and IPPIFE schemes behave similarly.
In particular, we choose the stabilization parameter in \eqref{ppife_1} as $\sigma_0=10\max\{\beta^-, \beta^+\}$, all the data are generated on a sequence of meshes characterized by the mesh size $h$, and the convergence rates are estimated by numerical results on two consecutive meshes.

\begin{figure}[h]
\centering
\begin{minipage}{.3\textwidth}
  \centering
  \includegraphics[width=0.8\textwidth]{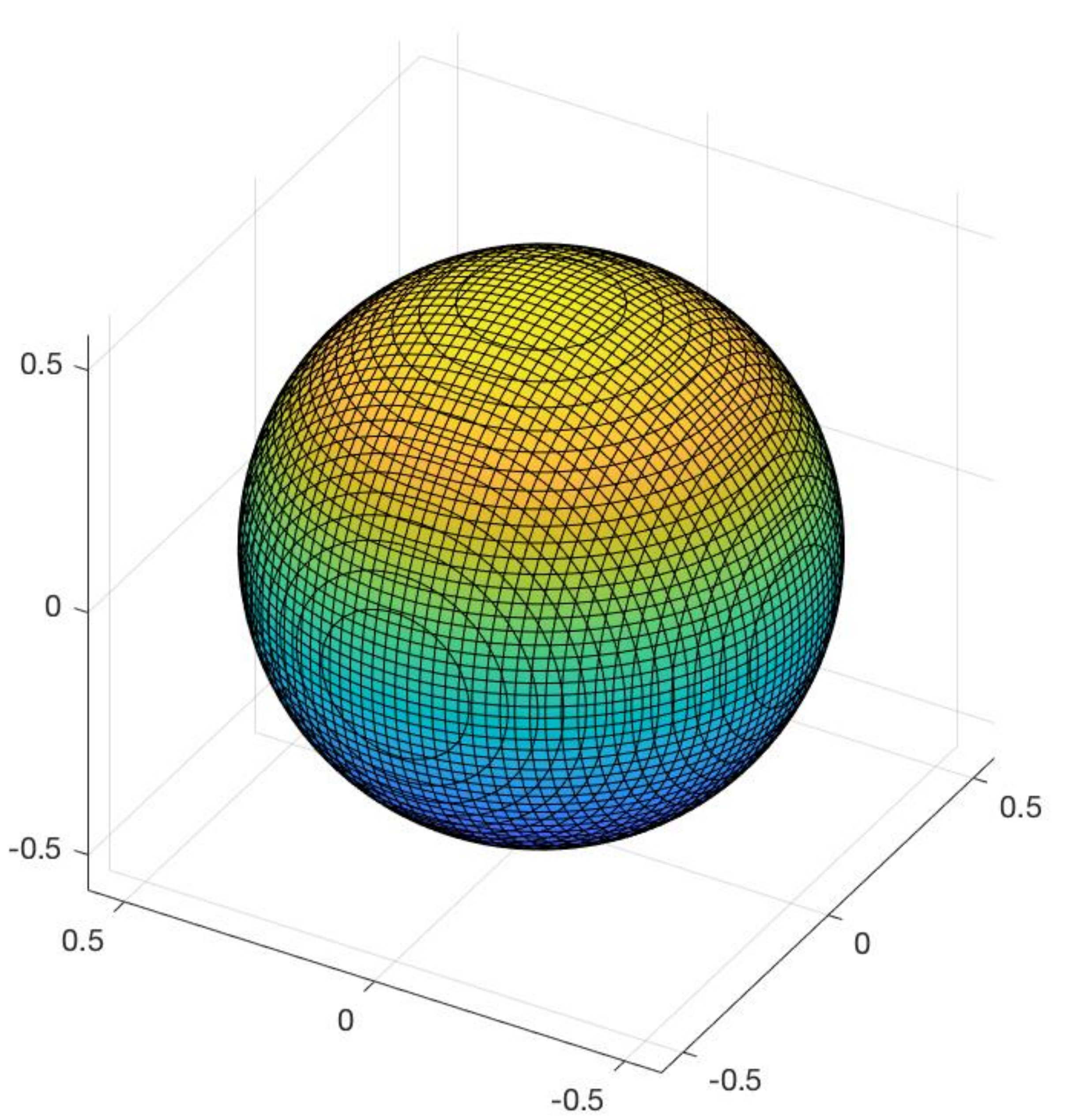}
  \caption{Sphere}
  \label{fig:sphere}
\end{minipage}
\begin{minipage}{.3\textwidth}
  \centering
  \includegraphics[width=0.85\textwidth]{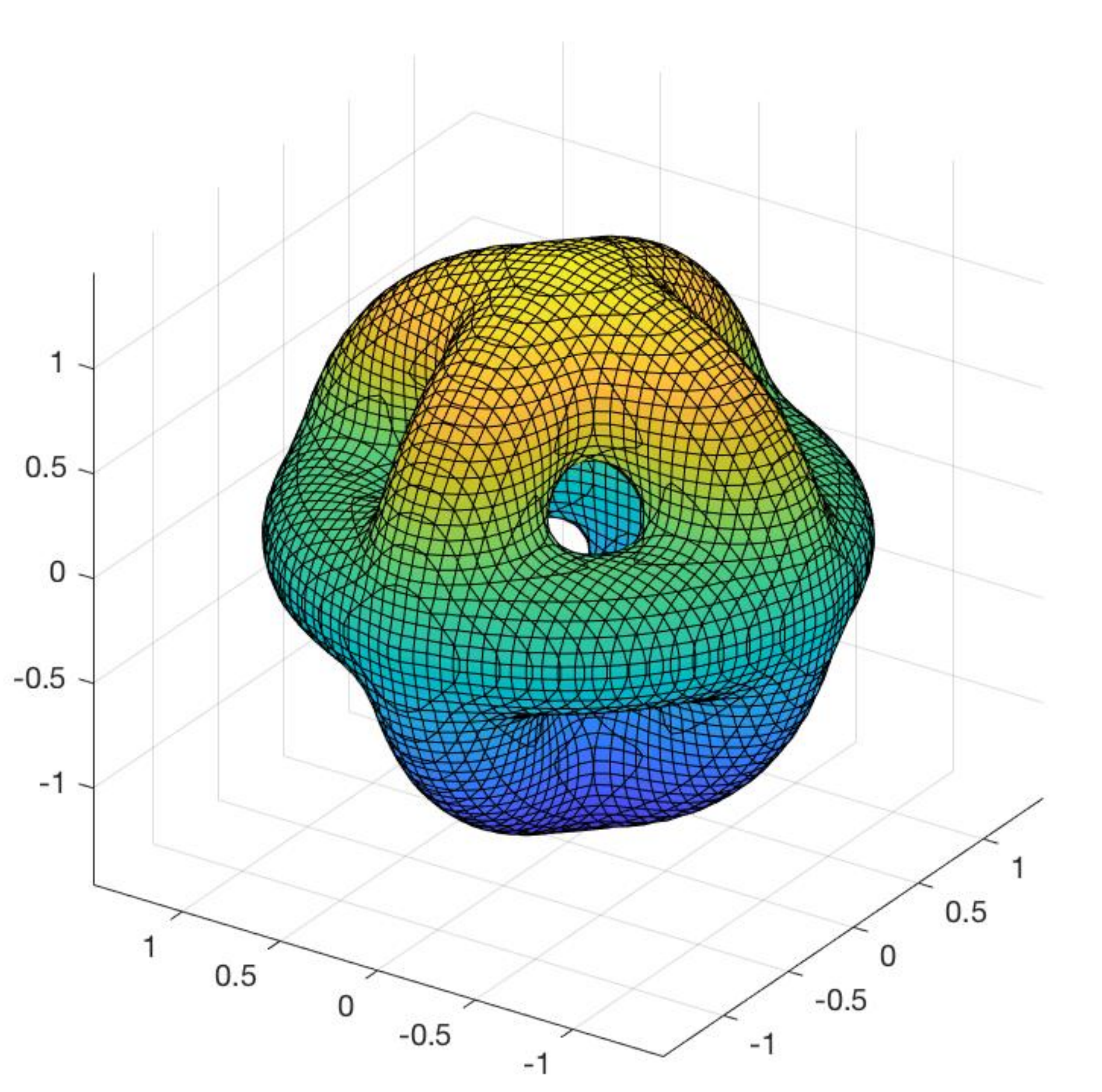}
  \caption{Orthocircle}
  \label{fig:torus3}
\end{minipage}
\begin{minipage}{.3\textwidth}
  \centering
  \includegraphics[width=0.7\textwidth]{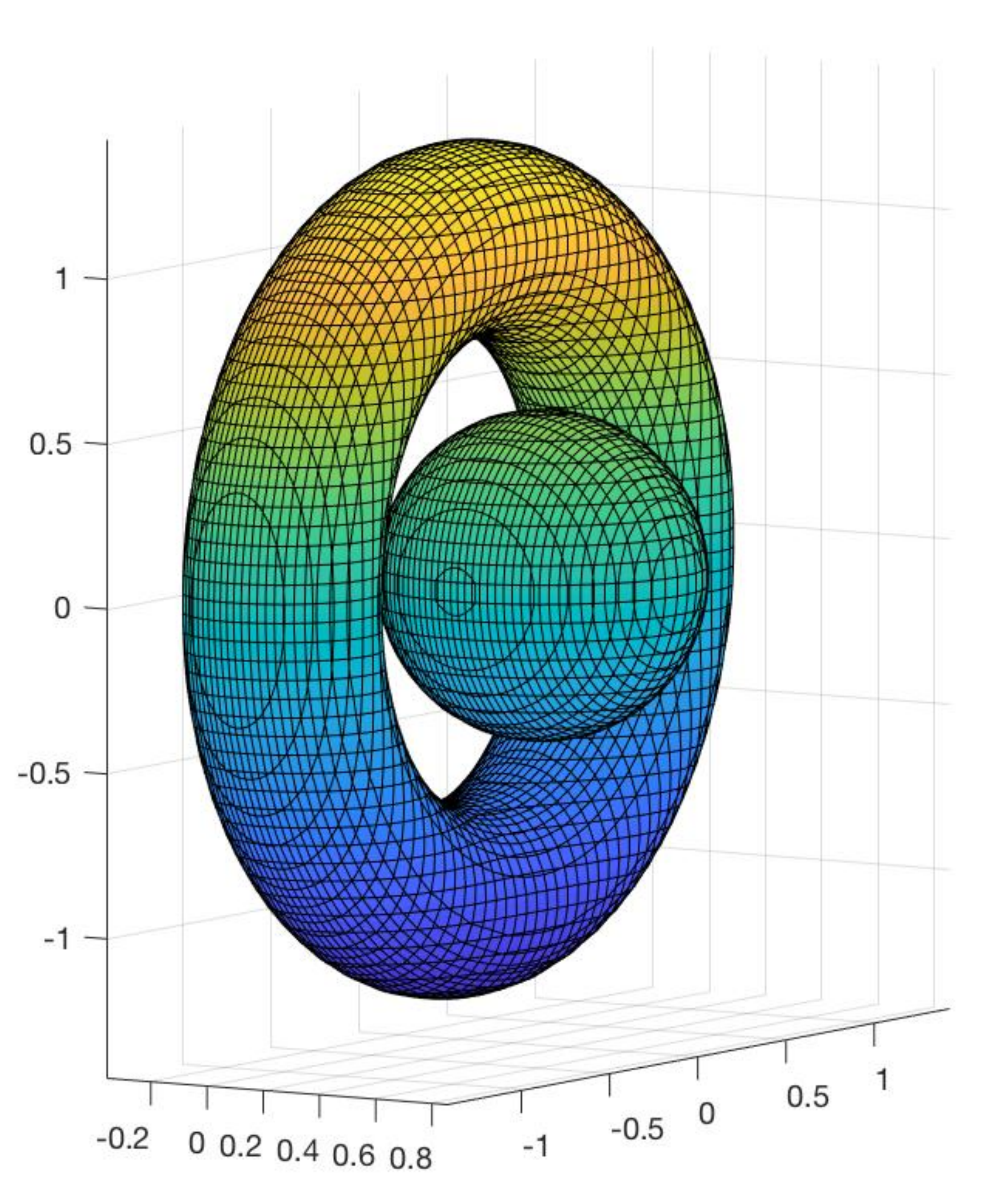}
  \caption{A torus and a sphere}
  \label{fig:torus_sphere}
\end{minipage}
\end{figure}

\commentout{
One of the key components in the implementation is the numerical quadrature for discontinuous IFE functions on interface elements. In generating the presented numerical results, we first employ a local triangularization of the cubic interface element according to the interface location. In this procedure, each tetrahedral subelement with a surface face will be approximated by the related linear tetrahedron. Then, we perform the standard quadrature rule on each of these tetrahedron, and sum the results over all the tetrahedral subelements to obtain the quadrature for discontinuous functions on interface elements. Alternatively, other special quadrature rules for discontinuous functions can be also applied such as the one based on the local smoothness \cite{2002Tornberg}. In this type of rules, the quadrature points are independent of the interface location which makes it easy for implementation, but in general much more points are needed. To avoid redundancy, we will not present the numerical results generated by this special quadrature rule since they behave similar to the one based on the local triangularization.
}

\noindent
{\bf Spherical Interface}: In the first example, the interface problem \eqref{model} has a simple spherical interface surface as shown in Figure \ref{fig:sphere} defined by a level-set: $\{X\in \mathbb{R}^3: w(X)=0\}$ with $w(X):=\sqrt{x_1^2 + x_2^2 + x_3^2} - r_0$ and $r_0=\pi/6$ such that $\Omega^-=\{X \in \Omega :  w(X)<0 \}$ and $\Omega^+ = \Omega - \Omega^-$.
We choose $f$ and $g$ in the interface problem \eqref{model} so that its exact solution is such that
$u=u^-(X) = r_0^6\left(w(x_1,x_2,x_3)/r_0+1\right)^{5}/\beta^-$ on $\Omega^-$ and $u=u^+(X) = r_0^6\left( w(x_1,x_2,x_3)/r_0+1\right)^{5}/\beta^+ + r^6_0\left(1/\beta^- - 1/\beta^+\right)$ on $\Omega^+$ with $\beta^-=1$, $\beta^+ =100$.
The numerical results for the Lagrange interpolation and SPPIFE solutions are presented in Tables \ref{table:examp_ell_interp_1_100} and \ref{table:examp_ell_ppife_1_100}, respectively. These data clearly demonstrate the optimal convergence for both the Lagrange interpolation and SPPIFE solution in $L^{2}$ and $H^1$ norms. We also note that the convergence rate in $L^{\infty}$ norm is close to optimal.

\begin{table}[H]
\begin{center}
\scriptsize{
\begin{tabular}{|c |c c|c c|c c|}
\hline
$h$   & $\| u - I_hu\|_{L^{\infty}(\Omega)}$ & rate  & $\| u - I_hu\|_{L^2(\Omega)}$ & rate   & $| u - I_hu|_{ PH^1(\Omega)}$ & rate   \\ \hline
1/20  &     1.8969e-03          &                   &   8.6758e-04          &                   &   1.9092e-02   &                  \\ \hline
1/30   &    9.3633e-04          &    1.7412    &   4.0365e-04          &   1.8871     &   1.3086e-02   &    0.9316    \\ \hline
1/40   &    5.8322e-04          &    1.6456    &   2.3393e-04          &   1.8963     &   1.0070e-02   &    0.9106    \\ \hline
1/50  &     3.9574e-04          &    1.7379    &   1.5217e-04          &   1.9270     &   8.1492e-03   &    0.9486     \\ \hline
1/60  &     2.7281e-04          &    2.0403    &   1.0675e-04          &   1.9444     &   6.8252e-03   &    0.9724     \\ \hline
1/70  &     2.0707e-04          &    1.7885    &   7.9031e-05          &   1.9504     &   5.8933e-03   &    0.9524     \\ \hline
1/80  &     1.5980e-04          &    1.9407    &   6.0869e-05          &   1.9554     &   5.1697e-03   &    0.9810     \\ \hline
1/90  &     1.2875e-04          &    1.8343    &   4.8356e-05          &   1.9540     &   4.6104e-03   &    0.9722     \\ \hline
1/100  &   1.0540e-04          &    1.8997    &   3.9327e-05          &   1.9616     &   4.1604e-03   &    0.9748      \\ \hline
\end{tabular}
}
\end{center}
\caption{Errors and rates of the Lagrange interpolation for $\beta^-=1$, $\beta^+=100$.}
\label{table:examp_ell_interp_1_100}
\end{table}

\begin{table}[H]
\begin{center}
\scriptsize{
\begin{tabular}{|c |c c|c c|c c|}
\hline
$h$   & $\| u - u_h\|_{L^{\infty}(\Omega)}$ & rate  & $\| u - u_h\|_{L^2(\Omega)}$ & rate   & $| u - u_h|_{ PH^1(\Omega)}$ & rate   \\ \hline
1/20  &     2.2425e-03          &                   &   1.1204e-03          &                   &    1.9938e-02    &                   \\ \hline
1/30   &    1.2337e-03          &    1.4738    &   5.5513e-04          &    1.7319    &    1.3609e-02    &    0.9420     \\ \hline
1/40   &    6.9686e-04          &    1.9855    &   3.0324e-04          &    2.1019    &    1.0328e-02    &    0.9589     \\ \hline
1/50  &     4.4909e-04          &    1.9689    &   1.9187e-04          &    2.0513    &    8.3026e-03    &    0.9781      \\ \hline
1/60  &     3.2664e-04          &    1.7461    &   1.3468e-04          &    1.9411    &    6.9389e-03    &    0.9842      \\ \hline
1/70  &     2.4643e-04          &    1.8280    &   9.7072e-05          &    2.1241    &    5.9748e-03    &    0.9704      \\ \hline
1/80  &     1.8649e-04          &    2.0871    &   7.5467e-05          &    1.8854    &    5.2362e-03    &    0.9883      \\ \hline
1/90  &     1.5431e-04          &    1.6079    &   5.9513e-05          &    2.0165    &    4.6595e-03    &    0.9906      \\ \hline
1/100  &   1.2467e-04          &    2.0243    &   4.8264e-05          &    1.9884    &    4.2026e-03    &    0.9795       \\ \hline
\end{tabular}
}
\end{center}
\caption{Errors and rates of the PPIFE solutions for $\beta^-=1$, $\beta^+=100$.}
\label{table:examp_ell_ppife_1_100}
\end{table}

By the analysis in Section \ref{sec:geometry} and Section \ref{sec:approximation}, we know that the rules we proposed in Section \ref{sec:geometry} to form the plane $\tau(T)$ for the construction of IFE functions in each interface element $T$ ensure the maximum angle property which further ensure the optimal approximation capability of the IFE space. On the other hand, our extensive numerical experiments indicate that the proposed trilinear IFE space will not possess the expected approximation capability in general if we do not follow these rules to form the plane $\tau(T)$, and we present two typical groups of data in Tables \ref{table:examp_ell_ppife_1_100_wrong} and \ref{table:examp_ell_ppife_1_100_right} for the corroboration of this observation. In these tables, we employ the following quantities to assess errors in the IFE interpolation of
the function $u$ described above:
\begin{equation}
\label{quantities}
\eta^{\infty}_h = \max_{T\in\mathcal{T}^i_h} \| u - I_h u \|_{L^{\infty}(T)}, ~~~~  \eta^{0}_h = \max_{T\in\mathcal{T}^i_h} \frac{ \| u - I_h u \|_{L^{2}(T)} }{\| u \|_{PH^2(T)}}, ~~~~ \eta^{1}_h = \max_{T\in\mathcal{T}^i_h} \frac{ | u - I_h u |_{H^{1}(T)} }{\| u \|_{PH^2(T)}}.
\end{equation}
Data in Table \ref{table:examp_ell_ppife_1_100_wrong} demonstrate the behavior of IFE functions constructed
with a ``wrong" approximation plane $\tau(T)$ determined by three interface points $D_{j_i}, i = 1, 2, 3$ randomly chosen in
each interface element $T$ so that the rules proposed for $\tau(T)$ are not always obeyed. The data in this table indicate that
these IFE functions do not even show any convergence. In contrast, data in Table \ref{table:examp_ell_ppife_1_100_right} demonstrate
the expected convergence of the IFE functions constructed with $\tau(T)$ formed with the proposed rules. From both the analysis and numerical experiments we can infer that the proposed rules to form the plane $\tau(T)$ for ensuring the maximum angle property is important.

\begin{table}[H]
\begin{center}
\scriptsize{
\begin{tabular}{|c |c c|c c|c c|}
\hline
$h$   & $\eta^{\infty}_h$      & rate             & $\eta^{0}_h$          & rate            & $\eta^{1}_h$     & rate   \\ \hline
1/20  &     1.8665e-03          &                    &    1.1782e-03         &                     &    4.0787e-02   &                  \\ \hline
1/30   &    5.0318e-03          &    -2.4459     &    1.7692e-03         &   -1.0027     &   1.2917e-01    &    -2.8431  \\ \hline
1/40   &    1.3626e-03          &      4.5411     &    5.9872e-04         &    3.7663     &    9.4378e-02   &    1.0909  \\ \hline
1/50  &     1.1089e-03          &      0.9231     &    3.7605e-04         &    2.0842     &    5.8381e-02   &   2.1525   \\ \hline
1/60  &     9.6156e-04          &     0.7821     &    3.3370e-04         &     0.6553    &    4.1506e-02   &    1.8712   \\ \hline
1/70  &     2.4471e-04          &      8.8777     &    1.1215e-04         &    7.0737     &    1.6304e-02   &   6.0618   \\ \hline
1/80  &     3.4541e-04          &     -2.5812     &    1.3698e-04         &  -1.4977     &    3.7622e-02   &    -6.2621   \\ \hline
1/90  &     3.9607e-04          &    -1.1620     &    1.3758e-04         &   -0.0371     &   5.8061e-02    &    -3.6840   \\ \hline
1/100  &   2.9810e-03          &    19.1571    &    9.7791e-04         &    -18.6146   &   2.2283e-01    &    -12.7647     \\ \hline
\end{tabular}
}
\end{center}
\caption{Errors and rates of $\eta^{\infty}_h$, $\eta^{0}_h$ and $\eta^{1}_h$ for $\beta^-=1$, $\beta^+=100$ by an inappropriate choice of $\tau$.}
\label{table:examp_ell_ppife_1_100_wrong}
\end{table}

\begin{table}[H]
\begin{center}
\scriptsize{
\begin{tabular}{|c |c c|c c|c c|}
\hline
$h$   & $\eta^{\infty}_h$      & rate             & $\eta^{0}_h$          & rate            & $\eta^{1}_h$     & rate   \\ \hline
1/20  &     1.8038e-03          &                   &   1.1782e-03        &                  &   4.0740e-02  &                  \\ \hline
1/30   &    9.3633e-04          &    1.6171     &   5.6690e-04         &  1.8043   & 2.8124e-02   &     0.9139  \\ \hline
1/40   &    5.8322e-04          &    1.6456     &  3.2608e-04         &   1.9224  &  2.2490e-02   &     0.7772  \\ \hline
1/50  &     3.7932e-04          &    1.9279     &  2.0603e-04         &   2.0575  &  1.8043e-02   &     0.9874    \\ \hline
1/60  &     2.6949e-04          &    1.8750    &   1.4117e-04        &     2.0736  &   1.4849e-02  &    1.0684    \\ \hline
1/70  &     2.0669e-04          &    1.7211      &  1.0552e-04         &   1.8884   &  1.3165e-02   &    0.7810    \\ \hline
1/80  &     1.5914e-04          &    1.9578     &  8.1105e-05         &    1.9705   &   1.1199e-02  &    1.2115     \\ \hline
1/90  &     1.2674e-04          &    1.9329     &   6.3621e-05         &  2.0615   & 1.0136e-02   &     0.8463      \\ \hline
1/100  &   1.0540e-04          &    1.7502    &   5.2089e-05        &    1.8982   &  9.3055e-03   &    0.8114      \\ \hline
\end{tabular}
}
\end{center}
\caption{Errors and rates of $\eta^{\infty}_h$, $\eta^{0}_h$ and $\eta^{1}_h$ for $\beta^-=1$, $\beta^+=100$ by the proposed $\tau$.}
\label{table:examp_ell_ppife_1_100_right}
\end{table}


\noindent
{\bf Orthocircle Interface}: In this example, the interface problem \eqref{model} has an interface surface with a more sophisticated geometry/topology than the
one in the previous example. In particular, the interface surface in this example is an orthocircle that is topologically isomorphic to the surface formed by three orthogonal torus as shown in Figure \ref{fig:torus3}. The interface can be described by a level-set:
$\{X\in \Omega : w(X)=0\}$ with $w(X) :=k(w_1(X)w_2(X)w_3(X)-\rho)$ in which
\begin{subequations}
\begin{align}
\label{f_123_func}
    &w_1(X) = w_1(x_1,x_2,x_3) = (x_1^2 + x_2^2 + x_3^2 + R^2 - r^2)^2 - 4R^2(x_2^2 + x_3^2),  \\
    &w_2(X) = w_2(x_1,x_2,x_3) = (x_1^2 + x_2^2 + x_3^2 + R^2 - r^2)^2 - 4R^2(x_1^2 + x_3^2),   \\
    &w_3(X) = w_3(x_1,x_2,x_3) = (x_1^2 + x_2^2 + x_3^2 + R^2 - r^2)^2 - 4R^2(x_1^2 + x_2^2),
\end{align}
\end{subequations}
and $\rho=0.15$, $k=10^{-3}, r=0.3, R=1$. We choose $f$ and $g$ in the interface problem \eqref{model} so that its exact solution is $u=u^s(x_1,x_2,x_3)=w(x_1,x_2,x_3)/\beta^s$, $s=\pm$ with $\beta^-=1$, $\beta^+ =100$. The Lagrange interpolation errors and PPIFE solution errors as well as the related convergence rates are presented in Tables \ref{table:examp_torus3_interp_1_100} and \ref{table:examp_torus3_ppife_1_100}. The data in these tables clearly demonstrate the optimal convergence, and this example indicates that the proposed IFE method can handle interface problems with quite complicated geometries and topologies.

\begin{table}[H]
\begin{center}
\scriptsize{
\begin{tabular}{|c |c c|c c|c c|}
\hline
$h$   & $\| u - I_hu\|_{L^{\infty}(\Omega)}$ & rate  & $\| u - I_hu\|_{L^2(\Omega)}$ & rate   & $| u - I_hu|_{ PH^1(\Omega)}$ & rate   \\ \hline
1/30   &    2.0451e-02          &                   &   7.1911e-03          &                   &   1.4047e-01   &                   \\ \hline
1/40   &    1.2089e-02          &    1.8274    &   4.0479e-03          &   1.9975      &   1.0539e-01   &   0.9988    \\ \hline
1/50  &     7.9730e-03          &    1.8655    &   2.5916e-03          &   1.9984      &   8.4327e-02   &   0.9993     \\ \hline
1/60  &     5.6492e-03          &    1.8897    &   1.8001e-03          &   1.9989      &   7.0279e-02   &   0.9995     \\ \hline
1/70  &     4.2107e-03          &    1.9066    &   1.3227e-03          &   1.9992      &   6.0243e-02   &   0.9996     \\ \hline
1/80  &     3.2589e-03          &    1.9189    &   1.0128e-03          &   1.9994      &   5.2715e-02   &   0.9997     \\ \hline
1/90  &     2.5967e-03          &    1.9284    &   8.0025e-04          &   1.9995      &   4.6859e-02   &   0.9998     \\ \hline
1/100  &   2.1176e-03          &    1.9358   &   6.4823e-04          &    1.9996      &   4.2174e-02   &    0.9998      \\ \hline
\end{tabular}
}
\end{center}
\caption{Errors and rates of the Lagrange interpolation for $\beta^-=1$, $\beta^+=100$.}
\label{table:examp_torus3_interp_1_100}
\end{table}

\begin{table}[H]
\begin{center}
\scriptsize{
\begin{tabular}{|c |c c|c c|c c|}
\hline
$h$   & $\| u - u_h\|_{L^{\infty}(\Omega)}$ & rate  & $\| u - u_h\|_{L^2(\Omega)}$ & rate   & $| u - u_h|_{ PH^1(\Omega)}$ & rate   \\ \hline
1/30   &    2.0458e-02          &                  &    8.1424e-03          &                    &    1.4051e-01   &                     \\ \hline
1/40   &    1.2092e-02          &    1.8278    &    4.6044e-03          &    1.9816     &    1.0552e-01   &   0.9955    \\ \hline
1/50  &     7.9741e-03          &    1.8658   &    2.9566e-03          &      1.9852   &     8.4453e-02   &   0.9979     \\ \hline
1/60  &     5.6498e-03          &    1.8899   &    2.0543e-03          &      1.9969   &     7.0377e-02   &   1.0000     \\ \hline
1/70  &     4.2110e-03           &    1.9067    &   1.5104e-03          &      1.9953    &    6.0326e-02   &   0.9997      \\ \hline
1/80  &     3.2591e-03          &    1.9190   &    1.1575e-03          &      1.9931   &     5.2785e-02   &   1.0001     \\ \hline
1/90  &     2.5969e-03          &    1.9285   &    9.1452e-04          &      2.0001   &     4.6917e-02   &   1.0004     \\ \hline
1/100  &   2.1177e-03           &    1.9359    &    7.4006e-04          &     2.0090    &   4.2220e-02    &   1.0013       \\ \hline
\end{tabular}
}
\end{center}
\caption{Errors and rates of the PPIFE solutions for $\beta^-=1$, $\beta^+=100$.}
\label{table:examp_torus3_ppife_1_100}
\end{table}


\noindent
{\bf Interface with Multiple Components}: We note that all the results developed in this article can be readily extended to treat interface problems whose interface surface separates the solution domain $\Omega$ into more than two subdomains.
For a demonstration, the interface surface in this example has multiple components while the interface surfaces in the previous two examples do not have. Specifically, the interface is the union of two disjoint surfaces, one surface is a sphere defined by the level-set $\{X \in \Omega : w_1(X) = 0\}$ where $w_1(X) = (x-x_0)^2 + (y-y_0)^2 + (z-z_0)^2 - \rho^2$ with $x_0=0.3$, $y_0=0$, $z_0=0$, $\rho=0.5$, the other surface is a torus defined by the level-set $\{X \in \Omega : w_2(X) = 0\}$ where $w_2(X) = (x_1^2 + x_2^2 + x_3^2 + R^2 - r^2)^2 - 4R^2(x_2^2 + x_3^2)$ with $R=1$, $r=0.3$.
These two interface components separate $\Omega$ into three subdomains $\Omega^i, i = 1, 2, 3$
such that the one inside the torus is $\Omega^1$, the inside of the sphere is $\Omega^2$, and
$\Omega^3 = \Omega - (\Omega^1 \cup \Omega^2)$. We choose $f$ and $g$ in the interface problem \eqref{model} so that
its exact solution is $u^s = f_1f_2/\beta^s$ on $\Omega^s$, $s=1,2,3$. The related numerical results reported in Tables \ref{table:examp_torus3_interp_1_100} and \ref{table:examp_torus3_ppife_1_100} clearly show the optimal convergence, and this demonstrates that the proposed IFE method can handle the interface problems whose interface consists of disjoint surfaces.

\begin{table}[H]
\begin{center}
\scriptsize{
\begin{tabular}{|c |c c|c c|c c|}
\hline
$h$   & $\| u - I_hu\|_{L^{\infty}(\Omega)}$ & rate  & $\| u - I_hu\|_{L^2(\Omega)}$ & rate   & $| u - I_hu|_{ PH^1(\Omega)}$ & rate   \\ \hline
1/30   &    5.9738e-02          &                   &   3.2079e-02           &                   &    7.2930e-01  &                   \\ \hline
1/40   &    3.6598e-02          &    1.7032    &   1.8643e-02           &   1.8866       &   5.5571e-01  &   0.9450    \\ \hline
1/50  &     2.4701e-02          &    1.7618    &   1.2138e-02           &   1.9231       &   4.4942e-01  &   0.9513     \\ \hline
1/60  &     1.7373e-02          &    1.9301    &   8.5212e-03           &   1.9404       &   3.7737e-01  &   0.9583     \\ \hline
1/70  &     1.2844e-02          &    1.9597    &   6.3182e-03           &   1.9405       &   3.2538e-01  &   0.9617     \\ \hline
1/80  &     1.0528e-02          &    1.4886    &   4.8800e-03           &   1.9343       &   2.8601e-01  &   0.9658     \\ \hline
1/90  &     8.4285e-03          &    1.8886    &   3.8756e-03           &   1.9566       &   2.5511e-01  &   0.9707     \\ \hline
1/100  &   6.8397e-03          &    1.9825    &   3.1525e-03           &   1.9598       &  2.3010e-01  &    0.9794      \\ \hline
\end{tabular}
}
\end{center}
\caption{Errors and rates of the Lagrange interpolation for $\beta^-=1$, $\beta^+=100$.}
\label{table:examp_torus_cic_interp_1_100}
\end{table}

\begin{table}[H]
\begin{center}
\scriptsize{
\begin{tabular}{|c |c c|c c|c c|}
\hline
$h$   & $\| u - u_h\|_{L^{\infty}(\Omega)}$ & rate  & $\| u - u_h\|_{L^2(\Omega)}$ & rate   & $| u - u_h|_{ PH^1(\Omega)}$ & rate   \\ \hline
1/30   &    8.6433e-02           &                  &    5.4842e-02         &                    &    8.2907e-01   &                     \\ \hline
1/40   &    5.0246e-02           &    1.8855    &   2.9779e-02          &     2.1226      &   6.0394e-01   &  1.1013   \\ \hline
1/50  &     3.4425e-02           &    1.6947   &    1.8322e-02         &      2.1766    &    4.7771e-01   &   1.0508    \\ \hline
1/60  &     2.3978e-02           &    1.9836   &    1.2392e-02         &      2.1451    &    3.9665e-01   &   1.0199    \\ \hline
1/70  &     1.7555e-02           &    2.0228    &   8.7675e-03         &      2.2443     &   3.3864e-01   &   1.0257     \\ \hline
1/80  &     1.3198e-02           &    2.1363   &    6.6078e-03         &      2.1179    &    2.9577e-01   &   1.0136    \\ \hline
1/90  &     1.0556e-02           &    1.8961   &    5.1240e-03         &      2.1592    &    2.6214e-01   &   1.0248    \\ \hline
1/100  &   8.6005e-03           &    1.9448    &   4.0734e-03          &     2.1777     &   2.3567e-01   &   1.0102      \\ \hline
\end{tabular}
}
\end{center}
\caption{Errors and rates of the PPIFE solutions for $\beta^-=1$, $\beta^+=100$.}
\label{table:examp_torus_cic_ppife_1_100}
\end{table}


\begin{appendices}
  \section{Technical Results}
  In this section, we present the proof of the two technical results.

  \subsection{Proof of Lemma \ref{lemma_gammadelta_01}}
  \label{App_A2}
  The proof is based on direct calculation. Without loss of generality, we only need to consider the interface element configuration shown in Figure \ref{fig:subfig_interf} with the vertices:
\begin{equation}
\begin{split}
\label{vertice}
A_1=(0,0,0), ~ A_2=(h,0,0), ~ A_3=(0,h,0), ~ A_4=(h,h,0),\\
  A_5=(0,0,h), ~ A_6=(h,0,h), ~ A_7=(0,h,h), ~ A_8=(h,h,h).
\end{split}
\end{equation}
and let the subelement containing $A_1$ be $T^-$. Here, we show a detailed discussion for the Case 1 in Figure \ref{fig:subfig_interf}(\subref{inter_elem_case1}).
Let $D_1=(d_1,0,0)$, $D_2=(0,d_2,0)$ and $D_3=(0,0,d_3)$ with $d_i\in[0,1]$, i=1,2,3. Then, we have
\begin{equation}
\begin{split}
\label{lgammadelta_eq_1}
\bfgamma^T\bfdelta = &\frac{1}{9 (d_2^2 d_3^2 +
   d_1^2 (d_2^2 + d_3^2))} ( d_1 d_2 d_3 ((-3 + d_2) d_2 (-3 + d_3) d_3 \\
   & - 3 d_1 (-3 d_2 + d_2^2 + (-3 + d_3) d_3) +
   d_1^2 (-3 d_2 + d_2^2 + (-3 + d_3) d_3)) ) \in [0,\frac{4}{9}].
   \end{split}
\end{equation}
In addition, note that $\|\bfdelta\|_{\infty}=|L(A_1)|$ for this case, then we can verify
\begin{equation}
\begin{split}
\label{lgammadelta_eq_2}
\frac{h^{-1} L(A_1)}{\bfgamma^T\bfdelta}
= &- 9 (d_2^2 d_3^2 + d_1^2 d_2^2 + d_1^2 d_3^2)^{1/2}  [ (-3 + d_2) d_2 (-3 + d_3) d_3 -
     3 d_1 (-3 d_2 + d_2^2 + (-3 + d_3) d_3)\\ +
     & d_1^2 (-3 d_2 + d_2^2 + (-3 + d_3) d_3) ]^{-1} \in [-2.25,-0.5776].
\end{split}
\end{equation}
The results for other cases can be proved by a similar argument, and here, we only list the bounds directly:
\begin{itemize}[leftmargin=45pt]
  \item[Case 2.] $\bfgamma^T\bfdelta \in [0,0.52]$ and $h^{-1}L(A_1)/\bfgamma^T\bfdelta\in[-3,0]$, $h^{-1}L(A_5)/\bfgamma^T\bfdelta\in[-2.133,0]$.
  \item[Case 3.]  $\bfgamma^T\bfdelta \in [0,1]$ and $h^{-1}L(A_1)/\bfgamma^T\bfdelta\in[0,1.664]$, $h^{-1}L(A_3)/\bfgamma^T\bfdelta\in[-1.039,2.603]$,\\
   $h^{-1}L(A_5)/\bfgamma^T\bfdelta\in[0,1.482]$, $h^{-1}L(A_7)/\bfgamma^T\bfdelta\in[0,1.664]$.
  \item[Case 4.]  $\bfgamma^T\bfdelta \in [2/9,8/9]$ and $h^{-1}L(A_1)/\bfgamma^T\bfdelta\in[-1.5,0]$, $h^{-1}L(A_2)/\bfgamma^T\bfdelta\in[-2.607,0]$,\\
   $h^{-1}L(A_3)/\bfgamma^T\bfdelta\in[-2.607,0]$.
   \item[Case 5.]  $\bfgamma^T\bfdelta \in [1/6,0.52]$ and $h^{-1}L(A_1)/\bfgamma^T\bfdelta\in[-7.43,-1.415]$, $h^{-1}L(A_2)/\bfgamma^T\bfdelta\in[-3.222,0]$,\\
    $h^{-1}L(A_3)/\bfgamma^T\bfdelta\in[-3.222,0]$, $h^{-1}L(A_5)/\bfgamma^T\bfdelta\in[-4.867,0]$.
\end{itemize}


  \subsection{Proof of Lemma \ref{lem_upm_tau}}
  \label{App_A1}
For each $T\in\mathcal{T}^i_h$, consider a local Cartesian system $\xi$, $\eta$, $\zeta$ where $\xi$, $\eta$ span the plane $\tau(T)$ and $\zeta$ is perpendicular to $\tau(T)$. Let $\zeta=f(\xi,\eta)$ be the equation of the surface $\Gamma_{\omega_T}$. Without causing any confusing, we still use $X$ to denote points in this local system. For each point $X=(\xi,\eta,0)\in\tau_{\omega_T}$, let $\widetilde{X}=(\xi,\eta,\tilde{\zeta})$ be the corresponding point on $\Gamma_{\omega_T}$ with $|X-\widetilde{X}| = |\tilde{\zeta}|\leqslant C  h^2$ by \eqref{interf_element_est_eq0_dist}. Firstly, for \eqref{upm_tau_eq01}, we let $v=u^+_E - u^-_E \in H^2(\omega_T)$, then $v|_{\Gamma}=0$. The Taylor expansion for $v(X)$ around $\widetilde{X}$ yields
\begin{equation}
\label{upm_tau_eq-1}
0=v(\widetilde{X}) = v(X) + \nabla v(X)\cdot(\widetilde{X}-X) + \int_{0}^{\tilde{\zeta}} \frac{\zeta}{2} \partial^2_{\zeta}v d\zeta
\end{equation}
Then, the triangular inequality yields
\begin{equation}
\label{upm_tau_eq-2}
\left( \int_{\tau_{\omega_T}} v^2 dX \right)^{1/2} \leqslant |X-\widetilde{X}| \left( \int_{\tau_{\omega_T}} (\nabla v)^2 dX \right)^{1/2} +  \left( \int_{\tau_{\omega_T}} \left( \int_{0}^{\tilde{\zeta}} \frac{\zeta}{2} \partial^2_{\zeta}v d\zeta \right)^2 dX  \right)^{1/2}
\end{equation}
For the first term in \eqref{upm_tau_eq-2}, we apply the trace inequality \eqref{trace_inequa_pyd_eq0} to obtain
\begin{equation}
\label{upm_tau_eq-3}
|X-\widetilde{X}| \left( \int_{\tau_{\omega_T}} (\nabla v)^2 dX \right)^{1/2} \leqslant C  h^2 \| \nabla v \|_{L^2(\tau_{\omega_T})}
\leqslant C  (  h^{3/2} | v |^2_{H^1(\omega_T)} +  h^{5/2} | v |^2_{H^2(\omega_T)} ).
\end{equation}
For the second term in \eqref{upm_tau_eq-2}, we apply the H\"older's inequality to obtain
\begin{equation}
\label{upm_tau_eq-4}
\left( \int_{\tau_{\omega_T}} \left( \int_{0}^{\tilde{\zeta}} \frac{\zeta}{2} \partial^2_{\zeta}v d\zeta \right)^2 dX \right)^{1/2}  \leqslant  |\tilde{\zeta}|^{3/2} \left( \int_{\tau_{\omega_T}} \int_{0}^{\tilde{\zeta}} (\partial^2_{\zeta}v)^2 d\zeta dX \right)^{1/2} \leqslant C h^3 |v|_{H^2(\omega_T)}.
\end{equation}
Putting \eqref{upm_tau_eq-3} and \eqref{upm_tau_eq-4} into \eqref{upm_tau_eq-2}, we have \eqref{upm_tau_eq01}.

For \eqref{upm_tau_eq02}, we let $w=\beta^+ u^+_E - \beta^-  u^-_E$, then $\nabla w\cdot\mathbf{ n}=0$ on $\Gamma$ where $\mathbf{ n}$ is the normal vector to $\Gamma$. Firstly, we use \eqref{interf_element_est_diff} and trace inequality  given by Lemma 3.2 in \cite{2016WangXiaoXu} to obtain
\begin{equation}
\begin{split}
\label{upm_tau_eq1}
\| \nabla w\cdot \bar{\mathbf{ n}} \|_{L^2(\Gamma_{\omega_T})} & = \| \nabla w\cdot ( \bar{\mathbf{ n}} - \mathbf{ n } ) \|_{L^2(\Gamma_{\omega_T})} \leqslant C  h \| \nabla w \|_{L^2(\Gamma_{\omega_T})} \\
& \leqslant  C (  h^{1/2}\| w \|_{H^1(\omega_T)}  +   h^{3/2} \| w \|_{H^2(\omega_T)} ).
\end{split}
\end{equation}
Note that $\zeta$ is in the direction $\bar{\mathbf{ n}}$. So by applying the first order Taylor expansion to $\nabla w\cdot\bar{\mathbf{ n}}$ and using similar argument to \eqref{upm_tau_eq-2}, we have
\begin{equation}
\begin{split}
\label{upm_tau_eq2}
\left( \int_{\tau_{\omega_T}} ( \nabla w(X)\cdot\bar{\mathbf{ n}} )^2 d\xi d\eta \right)^{1/2} 
\leqslant \left( \int_{\tau_{\omega_T}} (\nabla w(\widetilde{X})\cdot\bar{\mathbf{ n}})^2 d\xi d\eta \right)^{1/2} +  \left(  \int_{\tau_{\omega_T}} \left( \int_{0}^{\tilde{\zeta}} \partial^2_{\zeta} w \, d \zeta  \right)^2 dX \right)^{1/2}.
\end{split}
 \end{equation}
 From \eqref{upm_tau_eq1}, \eqref{interf_element_est_prod} and trace inequality given by Lemma 3.2 in \cite{2016WangXiaoXu}, we have
 \begin{equation}
 \begin{split}
\label{upm_tau_eq3}
\left( \int_{\tau_{\omega_T}} (\nabla w(\widetilde{X})\cdot\bar{\mathbf{ n}})^2 d\xi d\eta \right)^{1/2} &= \left( \int_{\Gamma_{\omega_T}} (\nabla w(\widetilde{X})\cdot\bar{\mathbf{ n}})^2 \frac{1}{\bar{\mathbf{ n}}\cdot\mathbf{ n}(\widetilde{X}) } dS \right)^{1/2} \leqslant C \| \nabla w(\widetilde{X})\cdot\bar{\mathbf{ n}} \|^2_{L^2(\Gamma_{\omega_T})} \\
& \leqslant  C (  h^{1/2}\| w \|_{H^1(\omega_T)}  +   h^{3/2} \| w \|_{H^2(\omega_T)} ).
\end{split}
\end{equation}
Next, by H\"older's inequality similar to \eqref{upm_tau_eq-4}, we have
\begin{equation}
\label{upm_tau_eq4}
\left( \int_{\tau_{\omega_T}} \left( \int_{0}^{\tilde{\zeta}} \partial^2_{\zeta} w \, d \zeta  \right)^2 d\xi d\eta \right)^{1/2}
\leqslant |\tilde{\zeta}|^{1/2} \left( \int_{\tau_{\omega_T}}  \int_{0}^{\tilde{\zeta}} ( \partial^2_{\zeta} w )^2 \, d \zeta  dX \right)^{1/2} \leqslant C h | w|_{H^2(\omega_T)}.
\end{equation}
Finally, substituting \eqref{upm_tau_eq3} and \eqref{upm_tau_eq4} to \eqref{upm_tau_eq2}, we arrive at \eqref{upm_tau_eq02}.

\end{appendices}


\end{document}